\gdef\eeaa#1pt{#1}}      %
\def\accentadjtext#1{\setbox0\hbox{$#1$}\kern   %
                \expandafter\eeaa\the\fontdimen1\textfont1 \ht0 }
\def\accentadjscript#1{\setbox0\hbox{$#1$}\kern %
                \expandafter\eeaa\the\fontdimen1\scriptfont1 \ht0 }
\def\accentadjscriptscript#1{\setbox0\hbox{$#1$}\kern   %
                \expandafter\eeaa\the\fontdimen1\scriptscriptfont1 \ht0 }
\def\accentadjtextback#1{\setbox0\hbox{$#1$}\kern       %
                -\expandafter\eeaa\the\fontdimen1\textfont1 \ht0 }
\def\accentadjscriptback#1{\setbox0\hbox{$#1$}\kern     %
                -\expandafter\eeaa\the\fontdimen1\scriptfont1 \ht0 }
\def\accentadjscriptscriptback#1{\setbox0\hbox{$#1$}\kern %
                -\expandafter\eeaa\the\fontdimen1\scriptscriptfont1 \ht0 }
\def\itoverline#1{{\mathsurround0pt\mathchoice
        {\rlap{$\accentadjtext{\displaystyle #1}
                \accentadjtext{\vrule height1.593pt}
                \overline{\phantom{\displaystyle #1}
                \accentadjtextback{\displaystyle #1}}$}{#1}}
        {\rlap{$\accentadjtext{\textstyle #1}
                \accentadjtext{\vrule height1.593pt}
                \overline{\phantom{\textstyle #1}
                \accentadjtextback{\textstyle #1}}$}{#1}}
        {\rlap{$\accentadjscript{\scriptstyle #1}
                \accentadjscript{\vrule height1.593pt}
                \overline{\phantom{\scriptstyle #1}
                \accentadjscriptback{\scriptstyle #1}}$}{#1}}
        {\rlap{$\accentadjscriptscript{\scriptscriptstyle #1}
                \accentadjscriptscript{\vrule height1.593pt}
                \overline{\phantom{\scriptscriptstyle #1}
                \accentadjscriptscriptback{\scriptscriptstyle #1}}$}{#1}}}}
\newcommand{\iol}{\itoverline}
\newcommand{\ch}[1]{{\mbox{\raise 1pt\hbox{\large$\chi$}}}_{\lower 1pt\hbox{$\scriptstyle #1$}}}
\def\1{\raisebox{2pt}{\rm{$\chi$}}}
\newtheorem{theorem}{Theorem}[section]
\newtheorem{lemma}[theorem]{Lemma}
\theoremstyle{definition}
\newtheorem{definition}[theorem]{Definition}
\newtheorem{remark}[theorem]{Remark}
\newtheorem{example}[theorem]{Example}
\DeclareFontFamily{U}{mathx}{}
\DeclareFontShape{U}{mathx}{m}{n}{<-> mathx10}{}
\DeclareSymbolFont{mathx}{U}{mathx}{m}{n}
\DeclareMathAccent{\widehat}{0}{mathx}{"70}
\DeclareMathAccent{\widecheck}{0}{mathx}{"71}
\newcommand{\R}{{\mathbb R}}
\newcommand{\N}{{\mathbb N}}
\newcommand{\Z}{{\mathbb Z}}
\newcommand{\Ha}{{\mathcal H}}
\newcommand\cp{\operatorname{cap}}
\newcommand\diam{\operatorname{diam}}
\newcommand{\eps}{{\varepsilon}}
\def\1{\raisebox{2pt}{\rm{$\chi$}}}
\def\vint_#1{\mathchoice%
        {\mathop{\kern 0.2em\vrule width 0.6em height 0.69678ex depth -0.58065ex
                \kern -0.8em \intop}\nolimits_{\kern -0.4em#1}}%
        {\mathop{\kern 0.1em\vrule width 0.5em height 0.69678ex depth -0.60387ex
                \kern -0.6em \intop}\nolimits_{#1}}%
        {\mathop{\kern 0.1em\vrule width 0.5em height 0.69678ex
            depth -0.60387ex
                \kern -0.6em \intop}\nolimits_{#1}}%
        {\mathop{\kern 0.1em\vrule width 0.5em height 0.69678ex depth -0.60387ex
                \kern -0.6em \intop}\nolimits_{#1}}}
\def\vintslides_#1{\mathchoice%
        {\mathop{\kern 0.1em\vrule width 0.5em height 0.697ex depth -0.581ex
                \kern -0.6em \intop}\nolimits_{\kern -0.4em#1}}%
        {\mathop{\kern 0.1em\vrule width 0.3em height 0.697ex depth -0.604ex
                \kern -0.4em \intop}\nolimits_{#1}}%
        {\mathop{\kern 0.1em\vrule width 0.3em height 0.697ex depth -0.604ex
                \kern -0.4em \intop}\nolimits_{#1}}%
        {\mathop{\kern 0.1em\vrule width 0.3em height 0.697ex depth -0.604ex
                \kern -0.4em \intop}\nolimits_{#1}}}
\newcommand{\intav}{\vint}
\newcommand{\Om}{\Omega}
\newcommand{\dist}{\operatorname{dist}}
\title[Fractional Hardy inequalities and capacity density]{Fractional Hardy inequalities and capacity density}
\author[L. Ihnatsyeva]{Lizaveta Ihnatsyeva}   %
\address[L.I.]{Department of Mathematics, Kansas State University, Manhattan, KS 66506, USA}
\email{ihnatsyeva@math.ksu.edu}
\author[K. Mohanta]{Kaushik Mohanta}   %
\address[K.M.]{Department of Mathematics and Statistics, P.O. Box 35, FI-40014 University of Jyvaskyla, Finland}
\email{kaushik.k.mohanta@jyu.fi}
\author[A. V. V\"ah\"akangas]{Antti V. V\"ah\"akangas}
\address[A.V.V.]{Department of Mathematics and Statistics, P.O. Box 35, FI-40014 University of Jyvaskyla, Finland}
 \email{antti.vahakangas@iki.fi}
\keywords{Pointwise fractional Hardy inequality, fractional Hardy inequality, comparison of capacities, self-improvement of capacity density condition,
metric measure space}
\subjclass[2020]{
	31C15   %
	(26D15,  %
	46E35, %
	31E05)}  %
\begin{document}

\begin{abstract}
We prove that a pointwise fractional Hardy inequality implies a fractional Hardy inequality,  defined via a Gagliardo-type seminorm.  The proof consists of two main parts. The first one is to characterize the pointwise fractional Hardy inequality in terms of  a fractional capacity density condition. The second part is to show the deep open-endedness or self-improvement property of the fractional capacity density, which we accomplish  in the setting of a complete geodesic space equipped with a doubling measure. These results are new already in the standard Euclidean setting. 
\end{abstract}
\maketitle

\section{Introduction}

This paper brings together the pointwise fractional Hardy inequalities introduced in
 \cite{MR4649157} and the recent developments on self-improvement of nonlocal capacity density conditions from \cite{MR946438,MR4478471,CILV,LMV}.
The motivation for our nonlocal problems comes from their local analogues.
 Recall that a closed set $E$ in a metric measure space, equipped with a doubling measure, supports a pointwise $p$-Hardy inequality
if there exists a constant $C_{\mathrm{H}}$ such that inequality
\begin{equation}\label{eq.hardy_intro}
|u(x)| \leq  C_{\mathrm{H}}\, \dist(x,E) (M_{2\dist(x,E)}(g^p)(x))^{1/p}\,
\end{equation}
holds for all 
 $x\in X\setminus E$, whenever $u$ is a Lipschitz function on $X$ such that $u=0$ in  $E$ and $g$ is 
 an upper gradient of $u$; we refer to
\eqref{e.max_funct_def_restricted} for  the definition
of the restricted maximal operator $M_{2\dist(x,E)}$.
The pointwise $p$-Hardy inequality in $\R^n$ with $g=\lvert \nabla u\rvert$ was first independently studied 
by Kinnunen--Martio in \cite{MR1470421} and by Haj{\l}asz in \cite{MR1458875}.
Korte--Lehrb\"ack--Tuominen proved in \cite{MR2854110}
that a pointwise $p$-Hardy inequality is equivalent to
the so-called $p$-capacity density condition of  $E$. By using 
this equivalence and the deep result on the self-improvement  of the $p$-capacity density condition,
see below, it was shown in \cite{MR2854110}
that the pointwise $p$-Hardy inequality \eqref{eq.hardy_intro} implies the integral Hardy inequality
\begin{equation}\label{e.integral_h}
\int_{X\setminus E} \frac{\lvert u(x)\rvert^p}{\dist(x,E)^p}\,d\mu(x)\le C \int_{X} g(x)^p\,d\mu(x)\,,
\end{equation}
where $E$, $u$, $g$, and $p$ are as in  \eqref{eq.hardy_intro}.

The key fact above is the self-improvement, meaning that
a  $p$-capacity density condition implies
an {\em a priori} stronger $\hat p$-capacity density condition, for some $1<\hat p<p$. 
The  $p$-capacity density condition, also known as uniform $p$-fatness, appears  in Hardy inequalities, potential theory
and PDE's, we refer to \cite{MR1207810,KLV2021}.
The self-improvement property of this condition  was first discovered by Lewis \cite{MR946438}
using potential theory in Euclidean spaces.
His result  has  been followed by other works incorporating different techniques, such as nonlinear potential theory \cite{MR1386213,MR1869615} and  
Hardy inequalities \cite{MR2564934,MR3673660}. A direct
proof for the self-improvement of pointwise Hardy inequalities by using curves is given in \cite{MR3976590,MR4116806}.
For a survey on Hardy inequalities, and their connections
to the $p$-capacity density condition, we refer to \cite{MR2723821,MR4480576,KLV2021}. 

Our main contribution is an extension of  the approach and  results in \cite{MR2854110} to the nonlocal setting of fractional Sobolev spaces and their generalizations. Some of the first steps were already taken in \cite{MR4649157}, where
the following nonlocal or fractional variant of inequality \eqref{eq.hardy_intro} was introduced.
We assume that $1< p,q<\infty$ and $0<\beta<1$.
A closed set $E$ in $X$ supports the 
pointwise $(\beta,p,q)$-Hardy inequality  if
there exists a constant $c_H>0$  such that
\begin{equation}\label{e.pw_hardy_intro}
\lvert u(x)\rvert \le c_H \dist(x,E)^{\beta} \bigl(M_{2\dist(x,E)}\bigl((G_{u,\beta,q,B(x,2\dist(x,E))})^p\bigr)(x)\bigr)^{1/p}\,,
\end{equation}
whenever $x\in X$ satisfies $0<\dist(x,E)<\diam(E)/8$ and  $u\colon X\to \R$ is a continuous function 
such that %
$u=0$ on $E$.  Here $M_{2\dist(x,E)}$ is the restricted
maximal operator as in \eqref{e.max_funct_def_restricted} and 
\[
G_{u,\beta,q,B(x,2\dist(x,E))}(y)=\biggl(\int_{B(x,2\dist(x,E))} \frac{\vert u(y)-u(z)\vert^q}{d(y,z)^{\beta q}\mu(B(y,d(y,z)))}\,d\mu(z)\biggr)^{1/q}\,,\quad y\in X\,.
\]
The following theorem is one of our main results: under suitable assumptions, the pointwise $(\beta,p,q)$-Hardy inequality implies
the  integral version of the $(\beta,p,q)$-Hardy inequality.
We also prove a closely related Theorem \ref{t.integrated}, which is  a stronger 
result  by Fatou's lemma.

\begin{theorem}\label{t.demo}
 Let $(X,d,\mu)$ be a complete geodesic space  equipped  with a doubling measure $\mu$
which satisfies  the quantitative doubling condition~\eqref{e.doubling_quant},
 with exponent $Q>0$,
and  the quantitative reverse doubling condition \eqref{e.reverse_doubling}, with exponent $\sigma>0$.
Let $1<p,q<\infty$ and $0<\beta< 1$ be such that 
\begin{equation}\label{e.restr}
Q\left(\frac{1}{p}-\frac{1}{q}\right)<\beta<\frac{\sigma}{p}\,.
\end{equation} 
Assume that an unbounded closed set $E\subset X$ supports the 
pointwise $(\beta,p,q)$-Hardy inequality. Then the integral $(\beta,p,q)$-Hardy inequality
\begin{equation}\label{eq.local_hardy_cap_intro}
\begin{split}
\int_{X\setminus E} \frac{\lvert u(x)\rvert^p}{\dist(x,E)^{\beta p}}\,d\mu(x)\le C \int_{X} \biggl(\int_{X}\frac{|u(x)-u(y)|^q}{d(x,y)^{\beta q}\mu(B(x,d(x,y)))}\,d\mu(y)\biggr)^{p/q}\,d\mu(x)
\end{split}
\end{equation}
holds whenever $u\colon X\to \R$ is a continuous function 
such that %
$u=0$ on $E$. Here the constant $C>0$ is independent of $u$.
\end{theorem}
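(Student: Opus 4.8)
The plan is to follow the two-step strategy advertised in the abstract, which mirrors the local scheme of Korte--Lehrb\"ack--Tuominen \cite{MR2854110} adapted to the Gagliardo-type seminorm.

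\textbf{Step 1: From the pointwise $(\beta,p,q)$-Hardy inequality to a fractional capacity density condition.} First I would invoke the characterization proved earlier in the paper, that $E$ supporting the pointwise $(\beta,p,q)$-Hardy inequality \eqref{e.pw_hardy_intro} is equivalent to $E$ satisfying a fractional ($\beta$-)capacity density condition (a uniform lower bound, for all $x\in E$ and all admissible scales $r$, of the relative fractional capacity of $E\cap \ol{B}(x,r)$ in $B(x,2r)$ by a fixed fraction of the capacity of the ball itself). The hypothesis \eqref{e.restr}, i.e. $Q(1/p-1/q)<\beta<\sigma/p$, is exactly what is needed for the relevant fractional Sobolev--Poincar\'e type embeddings and capacity estimates to behave as in the Euclidean model, so this equivalence is available under the stated assumptions.

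\textbf{Step 2: Self-improvement of the fractional capacity density condition.} Next I would apply the deep open-endedness result established in the paper: in a complete geodesic space with a doubling measure (satisfying the quantitative doubling \eqref{e.doubling_quant} and reverse doubling \eqref{e.reverse_doubling}), the fractional capacity density condition self-improves. Concretely, if $E$ satisfies the $(\beta,p,q)$-capacity density condition, then there is some exponent pair strictly inside the admissible range (with a smaller first integrability exponent $\hat p<p$, or equivalently a slightly larger $\hat\beta$) for which the corresponding capacity density condition still holds, with uniform constants. This is the nonlocal analogue of Lewis's theorem and is the technical heart of the argument.

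\textbf{Step 3: From the improved capacity density condition to the integral inequality.} With the self-improved condition in hand, the integral $(\beta,p,q)$-Hardy inequality \eqref{eq.local_hardy_cap_intro} follows by a now-standard argument: one decomposes $X\setminus E$ into a Whitney-type family of balls, on each Whitney ball one controls $\lvert u\rvert^p$ by the pointwise inequality (using the slightly better exponent to absorb the loss coming from overlapping the maximal function and the tails of the Gagliardo kernel), sums the estimates using the boundedness of the restricted maximal operator on $L^{p/\hat p}$ (valid since $p/\hat p>1$), and uses the bounded overlap of the enlarged Whitney balls together with the doubling property to reassemble the right-hand side into the single Gagliardo integral over $X$. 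One must be a little careful that the localized Gagliardo energy $G_{u,\beta,q,B(x,2\dist(x,E))}$ appearing in \eqref{e.pw_hardy_intro} is dominated by the global one, which it clearly is since the integration domain is smaller; the extra integrability margin from Step 2 is what makes the summation converge.

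\textbf{Main obstacle.} The genuinely hard part is Step 2, the self-improvement of the fractional capacity density condition. Unlike the local case, where curve-based or potential-theoretic techniques are well developed, the nonlocal capacity involves a double integral whose kernel couples distant scales, so the usual localization and iteration arguments do not apply verbatim; controlling the nonlocal tails while running the iteration, and doing so with uniform constants in a merely geodesic (not necessarily Loewner or PI) space, is where the real work lies. Everything else — Step 1's equivalence and Step 3's Whitney summation — is, given the paper's earlier results, essentially bookkeeping with doubling and maximal-function estimates.
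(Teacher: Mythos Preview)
Your strategy matches the paper's exactly: Theorem~\ref{t.eqv_f} gives Step~1, Theorem~\ref{t.main_impro} gives Step~2, and then (via Theorem~\ref{t.eqv_f} again, packaged as Theorem~\ref{t.smaller}) one obtains the pointwise $(\beta,\hat p,q)$-Hardy inequality for some $\hat p<p$, after which the integral inequality follows from the $L^{p/\hat p}$-boundedness of the maximal operator. The only minor deviation is in Step~3: the paper does not use a Whitney decomposition but simply integrates the improved pointwise estimate directly over a ball $B(x_0,R)$ centered at $x_0\in E$ (Theorem~\ref{t.integrated}) and then lets $R\to\infty$ via Fatou's lemma, using that $E$ is unbounded; also note that the restriction~\eqref{e.restr} is actually needed for Step~2 (the comparison with Haj{\l}asz--Triebel--Lizorkin capacities underlying the self-improvement), not for the equivalence in Step~1, which only requires connectedness.
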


When $q=p$, the  seminorm on the right of \eqref{eq.local_hardy_cap_intro} is a generalization of the well known Gagliardo seminorm, which is used to define a fractional Sobolev space in the standard Euclidean setting; see
\cite{MR2944369,MR4480576,MR4567945,MR2250142} for expositions on fractional Sobolev  spaces
that have   attracted plenty of interest in recent years.
When $\mu$ is the Lebesgue
measure in the Euclidean space $\R^n$, then  \eqref{e.restr} reads  
as $n(1/p-1/q)<\beta<n/p$.
This restriction on  parameters and  
additional assumptions on $X$
come from limitations of our methods, and
it is unclear to us to which extent these restrictions can be relaxed. 
On the other hand, Theorem~\ref{t.demo} and our other main results are new  even in 
a Euclidean space equipped with the Lebesgue measure.
Other capacity related sufficient conditions for the integral Hardy inequality \eqref{eq.local_hardy_cap_intro} in the special case $q=p$ are given in \cite{MR3148524,MR3277052}.
These papers employ the Lewis' result \cite{MR946438} on  the self-improvement of the Riesz $(\beta,p)$-capacity density condition in Euclidean spaces, see the discussion further below.

Our overall strategy for the proof of Theorem \ref{t.demo} 
is to show the following self-improvement of a pointwise Hardy inequality: under the asumptions of Theorem~\ref{t.demo},  the set $E$  supports a 
 pointwise $(\beta,\hat p,q)$-Hardy inequality for some $1<\hat p<p$ or, stated equivalently,
\begin{equation}\label{e.better}
\frac{\lvert u(x)\rvert^{p}}{\dist(x,E)^{\beta p}} \le C \left( M_{2\dist(x,E)}\bigl((G_{u,\beta,q,B(x,2\dist(x,E))})^{\hat p}\bigr)(x)\right)^{p/\hat p}\,,
\end{equation}
whenever $x\in X\setminus E$ and  $u$ is as in \eqref{e.pw_hardy_intro}.
Observe that \eqref{e.better} implies \eqref{eq.local_hardy_cap_intro}, since $p/\hat p>1$ and  therefore
the restricted maximal operator is bounded on  $L^{p/\hat p}(X)$.
We prove this self-improvement of a pointwise Hardy inequality, that is, Theorem \ref{t.smaller},
in two  main parts.
In the first part we prove Theorem~\ref{t.eqv_f}, which  shows that the pointwise $(\beta,p,q)$-Hardy inequality \eqref{e.pw_hardy_intro} is equivalent 
to the fact that $E$ satisfies a fractional $(\beta,p,q)$-capacity density condition, we refer to Definition \ref{d.cap_density}.
In the second part we prove Theorem~\ref{t.main_impro}, which shows
that the fractional $(\beta,p,q)$-capacity density condition is self-improving; 
this part is longer and it applies results from  \cite{LMV}.

A more specific outline for the paper  is as follows.
In Section~\ref{s.prelim} we 
introduce the standing assumptions and notation, and recall some concepts that are
needed later on. For instance, the fractional Poincar\'e inequalities 
are recalled from \cite{MR4649157}.
An important notion of a  fractional relative capacity is defined in Section \ref{s.capacitary},
where we also  give estimates for relative capacities of balls and lower
bounds for relative capacities of  closed sets in terms of Hausdorff contents. %
The fractional $(\beta,p,q)$-capacity density condition is introduced
in Section~\ref{s.fcdc}, where its equivalence with the  pointwise $(\beta,p,q)$-Hardy inequality \eqref{e.pw_hardy_intro} is also  shown. 
In Section~\ref{s.hajlasz} we define certain Haj{\l}asz--Triebel--Lizorkin seminorms and we  show in Section~\ref{s.semi} that these seminorms are often comparable to  Gagliardo-type  seminorms that are used to define the fractional capacities. 
 A  Sobolev--Poincar\'e type inequality is a key tool  here,  see Lemma \ref{lemma:Sobolev-Poincare 2}.
The  comparability  of fractional and Haj{\l}asz--Triebel--Lizorkin relative capacities follows from  the equivalence of the corresponding seminorms, as it is shown in Section \ref{s.comparison_capacities}. 
 Using the capacity comparison and  results from \cite{LMV}, we 
obtain a $q$-independence property for the fractional relative capacity in Theorem~\ref{l.q_independence}; 
we refer to \cite[Sections~3.6 and~4.4]{MR1411441} for analogous  properties of Triebel--Lizorkin capacities in Euclidean spaces. 
Using the  capacity comparison results from Section \ref{s.comparison_capacities} and \cite[Theorem 11.4]{LMV}, in Section~\ref{s.three} 
we show that the fractional $(\beta,p,q)$-capacity density condition has three ways of self-improvement, with respect to parameters $\beta$, $p$ and $q$, see Theorem \ref{t.main_impro}.
Our main results  are proved in the last Section~\ref{s.applications},
 including 
Theorem~\ref{t.integrated} that follows from
 the self-improvement of the pointwise Hardy inequality in Theorem~\ref{t.smaller}.

Finally, we make some remarks on the line  of research developed in \cite{MR946438},  \cite{MR4478471}, \cite{CILV}, \cite{LMV}.
The origins of our results can  be traced  to the seminal work by Lewis \cite{MR946438}, who  proved that a Riesz $(\beta,p)$-capacity density  condition in Euclidean spaces is doubly self-improving, that is,  self-improving  in both parameters $\beta$ and $p$. %
More recently, a Haj{\l}asz $(\beta,p)$-capacity density condition in metric  spaces was introduced in \cite{MR4478471} by using  Haj{\l}asz gradients of order $0<\beta\le 1$. It was also shown that this condition is doubly self-improving, if $X$ is a complete geodesic space. The comparability of  Riesz and Haj{\l}asz $(\beta,p)$-capacities
was shown in \cite{CILV}, where Lewis' result was also  transferred to complete geodesic  spaces.
A flexible and {\em a priori} more general Haj{\l}asz--Triebel--Lizorkin $(\beta,p,q)$-capacity density condition 
was introduced in \cite{LMV} by using fractional Haj{\l}asz gradients and Haj{\l}asz--Triebel--Lizorkin seminorms, see Definition~\ref{d.HTL_cap_density}. The Riesz $(\beta,p)$-capacity density
condition for $0<\beta<1$ is its special case, corresponding
to the indices $(\beta,p,\infty)$.
It was shown in \cite{LMV}
that the Haj{\l}asz--Triebel--Lizorkin $(\beta,p,q)$-capacity density
condition is self-improving in all three parameters $\beta$, $p$, $q$ and, in fact, 
independent of the $q$-parameter,
under suitable assumptions. Consequently, the {\em a priori} strongest Riesz $(\beta,p)$-capacity density condition
is  equivalent to a  Haj{\l}asz--Triebel--Lizorkin $(\beta,p,q)$-capacity density
condition, for all $1< q\le \infty$. Theorems \ref{l.q_independence} and \ref{t.main_impro} complement this line of research, the former proving the $q$-independence of the fractional capacity, and  the latter 
proving the self-improvement of a fractional capacity density condition in all three parameters.

\subsection*{Acknowledgements} 
The second author was supported by the Academy of Finland (project \#323960) and by the Academy of Finland via Centre of Excellence in Analysis and Dynamics Research (project \#346310).

\section{Preliminaries}\label{s.prelim}

Throughout the paper  we assume that $X=(X,d,\mu)$ is a metric measure space equipped with a metric $d$ and a 
nonnegative complete Borel
measure $\mu$ such that $0<\mu(B)<\infty$
for all balls $B\subset X$, each of which is an open set of the form \[B=B(x,r)=\{y\in X\,:\, d(y,x)<r\}\] with $x\in X$ and $r>0$.
 Under these assumptions 
the space $X$ is separable,
see \cite[Proposition~1.6]{MR2867756}.
We also assume that $\# X\ge 2$.
We let $n_0=-\infty$ if $\diam(X)=\infty$ and otherwise choose $n_0$ be the smallest integer
such that $2^{-n_0}\le 2\diam(X)$. In the latter
case, we have $\diam(X)<2^{-n_0}\le 2\diam(X)$. 

We assume throughout the paper 
that the measure $\mu$ is {\em doubling}, that is,
there is a constant $c_\mu> 1$, called
the {\em doubling constant of $\mu$}, such that
\begin{equation}\label{e.doubling}
\mu(2B) \le c_\mu\, \mu(B)
\end{equation}
for all balls $B=B(x,r)$ in $X$. 
Here and throughout the paper we use for $0<t<\infty$ the notation $tB=B(x,tr)$. 
Iteration of~\eqref{e.doubling} shows that if $\mu$ is doubling, then there exist an exponent $Q>0$ and a constant $c_Q>0$,
both  depending on $c_\mu$ only, such that the {\em quantitative doubling condition}
\begin{equation}\label{e.doubling_quant}
  \frac{\mu(B(y,r))}{\mu(B(x,R))}\ge c_Q\Bigl(\frac {r}{R}\Bigr)^Q
\end{equation}
holds whenever $y\in B(x,R)\subset X$ and $0<r<R$. %
Condition~\eqref{e.doubling_quant} holds
for $Q\ge \log_2c_\mu>0$,  but it can hold for smaller values of $Q$ as well. See~\cite[Lemma~3.3]{MR2867756}
for details.

In some of our results we also  assume that $\mu$ is
\emph{reverse doubling}, in the sense that %
there are constants  $0<\kappa<1$  and $0<c_R<1$ such that
\begin{equation}\label{e.rev_dbl_decay}
\mu(B(x,\kappa r))\le c_R\, \mu(B(x,r))
\end{equation}
for every $x\in X$ and %
$0<r<\diam(X)/2$.
If $X$ is connected and $0<\kappa<1$, then
inequality~\eqref{e.rev_dbl_decay} follows from the doubling property~\eqref{e.doubling} 
with $0<c_R=c_R(c_\mu,\kappa)<1$; see~\cite[Lemma~3.7]{MR2867756}. 
Iteration of~\eqref{e.rev_dbl_decay} shows that
if $\mu$ is reverse doubling, then there exist
an exponent $\sigma>0$ and a constant $c_\sigma>0$,
both only depending on $\kappa$ and $c_R$, such that the {\em quantitative 
 reverse doubling condition}
\begin{equation}\label{e.reverse_doubling}
 \frac{\mu(B(x,r))}{\mu(B(x,R))} \le c_\sigma\Bigl(\frac{r}{R}\Bigr)^\sigma
\end{equation}
holds for every $x\in X$ and %
$0<r<R\le 2\diam(X)$.  See the proof of \cite[Corollary~3.8]{MR2867756}.

We use the following familiar notation: \[
f_A=\vint_{A} f(y)\,d\mu(y)=\frac{1}{\mu(A)}\int_A f(y)\,d\mu(y)
\]
is the integral average of $f\in L^1(A)$ over a measurable set $A\subset X$
with $0<\mu(A)<\infty$. 
If $f:X\to \R$ is a measurable function, then the {\em centered Hardy--Littlewood
 maximal function} $Mf$ of $f$ is defined by
\begin{equation}\label{e.max_funct_def_noncentered}
Mf(x)=\sup_{r>0}\vint_{B(x,r)} \lvert f(y)\rvert\,d\mu(y)\,,\qquad x\in X\,,
\end{equation}
The sublinear operator $M$ is bounded on $L^p(X)$ 
with a constant $C(c_\mu,p)$
for every $1<p\leq \infty$, see \cite[Theorem 3.13]{MR2867756}. 
If $0<R<\infty$, we define the restricted maximal function $M_Rf$ of a measurable function $f\colon X\to \R$ by
\begin{equation}\label{e.max_funct_def_restricted}
M_Rf(x)=\sup_{0<r\le R}\vint_{B(x,r)} \lvert f(y)\rvert\,d\mu(y)\,,\qquad x\in X\,.
\end{equation}

 We write $\N=\{1,2,3,\ldots\}$. 
The characteristic function of set $A\subset X$ is denoted by $\mathbf{1}_{A}$; that is, $\mathbf{1}_{A}(x)=1$ if $x\in A$
and $\mathbf{1}_{A}(x)=0$ if $x\in X\setminus A$.
The closure of a set $A\subset X$ is denoted by $\iol{A}$. In particular, if $B\subset X$ is a ball,
then the notation $\iol{B}$ refers to the closure
of the ball $B$.

\begin{definition}
We say that a function $u\colon X\to\R$ is integrable
on balls if $u$ is $\mu$-measurable and
\[\lVert u\rVert_{L^1(B)}=\int_B \lvert u(x)\rvert\,d\mu(x)<\infty\] for all
balls $B\subset X$. In particular, for such functions the
integral average
\[
u_B = \vint_B u(x)\,d\mu(x) =\frac{1}{\mu(B)}\int_B u(x)\,d\mu(x)
\]
is well-defined whenever $B$ is a ball in $X$.
\end{definition}

We do not  always assume that the space $X$ is complete, 
and hence continuous functions are not necessarily integrable on balls.

The following definition is from \cite[Definition 2.1]{MR4649157}.
 We remark that the seminorm on the right-hand side of \eqref{e.poinc} is well-defined, even though it might be infinite in some cases. 

\begin{definition}\label{def.poinc}
Let $1\le t,p,q<\infty$ and $0<\beta<1$. 
We say that $X$  
supports a fractional $(\beta,t,p,q)$-Poincar\'e inequality if
there are constants $c_P>0$ and $\lambda \ge 1$ such that inequality
\begin{equation}\label{e.poinc}
\begin{split}
&\biggl(\vint_B \lvert u(x)-u_B\rvert^t\,d\mu(x)\biggr)^{1/t}
\\&\qquad \le c_Pr^{\beta}\biggl(\vint_{\lambda B}\biggl( \int_{\lambda B}\frac{|u(x)-u(y)|^q}{d(x,y)^{\beta q}\mu(B(x,d(x,y)))}\,d\mu(y)\biggr)^{p/q}\,d\mu(x)\biggr)^{1/p}
\end{split}
\end{equation}
holds for every ball $B=B(x_0,r)\subset X$
and for all functions $u\colon X\to \R$  that are
integrable on balls. 
In particular, the left-hand side of \eqref{e.poinc}
is finite if the right-hand side is finite.
\end{definition}

If $u\colon X\to\R$ is a measurable function, $0<\beta<1$, $1\le q<\infty$, and $A\subset X$ is a measurable set, we write 
\begin{equation}\label{e.gdef}
G_{u,\beta,q,A}(x)=\biggl(\int_{A} \frac{\vert u(x)-u(y)\vert^q}{d(x,y)^{\beta q}\mu(B(x,d(x,y)))}\,d\mu(y)\biggr)^{1/q},\qquad \text{ for every } x\in X.
\end{equation}
Observe that 
\begin{equation}\label{e.monotonicity}
G_{L(u),\beta,q,A}\le G_{u,\beta,q,A}\le G_{u,\beta,q,A'}
\end{equation}  whenever $L\colon \R\to \R$ is a $1$-Lipschitz function and $A\subset A'$.

Using this notation, 
the $(\beta,t,p,q)$-Poincar\'e inequality \eqref{e.poinc}
can be written as
\begin{equation*}%
\biggl(\vint_B \lvert u(x)-u_B\rvert^t\,d\mu(x)\biggr)^{1/t}
\le c_Pr^{\beta}\biggl(\vint_{\lambda B}G_{u,\beta,q,\lambda B}(x)^p\,d\mu(x)\biggr)^{1/p}.
\end{equation*}

The following lemma from \cite[Lemma 2.2]{MR4649157} shows that $X$ supports a $(\beta,t,p,q)$-Poincar\'e inequality
if $1\le t \le \min\{p,q\}$.  
We emphasize that  the doubling condition
on $\mu$ is the only quantitative property of $X$ that
is needed in this case. 
This result is certainly 
known among experts,  but  we include
the short proof from \cite{MR4649157} for the  convenience of the reader. 

\begin{lemma}\label{l.qp}
Assume that $1\le t,p,q<\infty$, $t\le \min\{p,q\}$, and $0<\beta<1$. Then
$X$ supports the $(\beta,t,p,q)$-Poincar\'e inequality \eqref{e.poinc}
with constants $\lambda=1$ and $c_P=c_P(\beta,t,q,c_\mu)$.
\end{lemma}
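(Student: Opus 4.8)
The plan is to establish the Poincaré inequality \eqref{e.poinc} with $\lambda = 1$ by reducing the oscillation $\lvert u(x) - u_B \rvert$ on the left-hand side to the Gagliardo-type double integral appearing on the right. The natural starting point is the pointwise bound
\begin{equation*}
\lvert u(x) - u_B \rvert \le \vint_B \lvert u(x) - u(y) \rvert \, d\mu(y),
\end{equation*}
which lets us integrate in $x$ and then try to symmetrize. The key structural feature to exploit is that $\mu(B(x, d(x,y)))$ is comparable to $\mu(B)$ for $x, y \in B$ whenever $d(x,y)$ is comparable to $r$ (by doubling), but for $x, y$ close together the denominator $\mu(B(x, d(x,y)))$ is much smaller, which is exactly what makes $G_{u,\beta,q,B}$ control the oscillation after accounting for the $r^\beta$ factor.

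\smallskip

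First I would handle the case $t = \min\{p, q\}$; the general case $t \le \min\{p,q\}$ then follows immediately from Jensen's inequality (or Hölder), since lowering $t$ only weakens the left-hand side while the right-hand side is unchanged. Within that reduction there are two subcases. If $t \le q \le p$ or more generally $t = q \le p$, I would bound
\begin{equation*}
\biggl(\vint_B \lvert u(x) - u_B \rvert^q \, d\mu(x)\biggr)^{1/q} \le \biggl(\vint_B \vint_B \lvert u(x) - u(y) \rvert^q \, d\mu(y)\, d\mu(x)\biggr)^{1/q},
\end{equation*}
using Jensen in the inner average, and then insert the weight: for $x, y \in B = B(x_0, r)$ one has $d(x,y) < 2r$ and $B(x, d(x,y)) \subset B(x_0, 3r)$, so by doubling $\mu(B(x, d(x,y))) \le C(c_\mu)\, \mu(B)$ and $d(x,y)^{\beta q} \le (2r)^{\beta q}$. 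Hence
\begin{equation*}
\lvert u(x) - u(y) \rvert^q \le C(c_\mu)\, r^{\beta q}\, \mu(B)\, \frac{\lvert u(x)-u(y)\rvert^q}{d(x,y)^{\beta q}\, \mu(B(x,d(x,y)))},
\end{equation*}
and integrating in $y$ over $B$ produces $r^{\beta q}\, \mu(B)\, G_{u,\beta,q,B}(x)^q$; dividing by $\mu(B)$ and integrating in $x$ gives the claim with $p = q$. Then to pass from $p = q$ to $p \ge q$ I would apply Hölder's inequality in $x$ with exponent $p/q$ to replace $\vint_B G_{u,\beta,q,B}(x)^q \, d\mu(x)$ by $(\vint_B G_{u,\beta,q,B}(x)^p \, d\mu(x))^{q/p}$, which is legitimate precisely because $p \ge q$.

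\smallskip

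The remaining and slightly more delicate subcase is $t = p < q$, i.e.\ when $t = p \le q$. Here I would start again from $\lvert u(x) - u_B\rvert \le \vint_B \lvert u(x) - u(y)\rvert\, d\mu(y)$, and now apply Hölder in the $y$-integral with exponent $q$ to get $\lvert u(x) - u_B\rvert \le (\vint_B \lvert u(x) - u(y)\rvert^q \, d\mu(y))^{1/q} = \mu(B)^{-1/q}(\int_B \lvert u(x)-u(y)\rvert^q \, d\mu(y))^{1/q}$. Inserting the weight as above bounds the inner integral by $C r^{\beta q}\mu(B)\, G_{u,\beta,q,B}(x)^q$, so $\lvert u(x) - u_B\rvert \le C r^\beta \mu(B)^{1/q - 1/q} G_{u,\beta,q,B}(x) = C r^\beta G_{u,\beta,q,B}(x)$; wait — more carefully, $\mu(B)^{-1/q}\cdot(\mu(B))^{1/q} = 1$, so we get the clean pointwise bound $\lvert u(x)-u_B\rvert \le C(c_\mu)\, r^\beta\, G_{u,\beta,q,B}(x)$ for a.e.\ $x \in B$. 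Raising to the power $p = t$ and averaging in $x$ yields \eqref{e.poinc} directly with $\lambda = 1$. In fact this last argument gives all cases $t \le q$ at once (no need to split off $t=q\le p$), and the case $q < t \le p$ is then recovered by the same weight-insertion plus Jensen/Hölder bookkeeping, so the whole proof collapses to: (i) the elementary oscillation bound, (ii) Hölder in $y$ to exponent $\max\{q, \text{something} \le q\}$, (iii) the doubling estimate $\mu(B(x,d(x,y))) \lesssim \mu(B)$ and $d(x,y) \lesssim r$ on $B \times B$, and (iv) a final Jensen step to adjust exponents.

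\smallskip

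I do not expect a genuine obstacle here — this is the "easy" Poincaré inequality where only doubling is used, consistent with the remark in the text. The one point requiring a little care is the exponent bookkeeping when $t \le \min\{p,q\}$ but $p \ne q$: one must make sure that the single application of Hölder in $y$ uses an exponent no larger than $q$ (so that the $G$-functional, which is defined with the $q$-th power inside, actually appears) and that the subsequent passage between the $x$-exponents $q$ and $p$ goes in the direction permitted by Hölder, namely from the smaller to the larger exponent, which is available exactly under the hypothesis $t \le \min\{p,q\}$. Tracking the constant shows it depends only on $\beta$, $t$, $q$, and $c_\mu$ through the doubling estimate, as claimed.
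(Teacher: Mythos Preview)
Your argument is correct and follows essentially the same route as the paper's proof: Jensen/H\"older to raise the inner exponent to $q$, doubling to insert the weight $d(x,y)^{\beta q}\mu(B(x,d(x,y)))$ in the denominator, and Jensen in $x$ (using $t\le p$) to produce the $L^p$ average of $G_{u,\beta,q,B}$. The paper just does these steps in a single chain of inequalities on the double integral, while you first package them into the pointwise bound $\lvert u(x)-u_B\rvert\le C(\beta,q,c_\mu)\,r^\beta G_{u,\beta,q,B}(x)$ and then integrate; these are reorderings of the same computation.

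Two minor presentational remarks. First, your initial case split ($t=q\le p$ versus $t=p<q$) is unnecessary: your pointwise bound uses only $q\ge 1$ in the H\"older step in $y$, and then raising to the $t$th power and applying Jensen in $x$ uses only $t\le p$, so the single argument covers every case allowed by the hypothesis $t\le\min\{p,q\}$. Second, the ``remaining case $q<t\le p$'' you mention does not occur under that hypothesis, so there is nothing further to recover. With those redundancies trimmed, your proof is exactly the paper's.
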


\begin{proof}
Fix a ball $B=B(x_0,r)\subset X$ and a 
function $u\colon X\to \R$ that is integrable on balls.
Then
\begin{align*}
\vint_{B} \lvert u(x)-u_B\rvert^t\,d\mu(x)
&\le \vint_{B}\vint_B \lvert u(x)-u(y)\rvert^t\,d\mu(y)\,d\mu(x)\\
&\le  \vint_{B}\biggl(\vint_B \lvert u(x)-u(y)\rvert^q\,d\mu(y)\,\biggr)^{t/q}d\mu(x)\\
&\le   \biggl(\vint_{B}\biggl(\vint_B \lvert u(x)-u(y)\rvert^q\,d\mu(y)\,\biggr)^{p/q}d\mu(x)\biggr)^{t/p}\\
&\le   r^{\beta t}\biggl(\vint_{B}\biggl(\int_B \frac{\lvert u(x)-u(y)\rvert^q}{r^{\beta q}\mu(B)}\,d\mu(y)\,\biggr)^{p/q}d\mu(x)\biggr)^{t/p}\\
&\le   Cr^{\beta t}\biggl(\vint_{B}\biggl(\int_B \frac{\lvert u(x)-u(y)\rvert^q}{d(x,y)^{\beta q}\mu(4B)}\,d\mu(y)\,\biggr)^{p/q}d\mu(x)\biggr)^{t/p}\\
&\le   Cr^{\beta t}\biggl(\vint_{B}\biggl(\int_B \frac{\lvert u(x)-u(y)\rvert^q}{d(x,y)^{\beta q}\, \mu(B(x,d(x,y)))}\,d\mu(y)\,\biggr)^{p/q}d\mu(x)\biggr)^{t/p}.
\end{align*}
This yields the desired inequality
\eqref{e.poinc} with $\lambda=1$ and
$c_P=c_P(\beta,t,q,c_\mu)$.
\end{proof}

For the proof of Lemma \ref{l.qp_stronger} we use the following variant of the Kolmogorov lemma, which is essentially
\cite[Lemma 4.22]{MR1800917}. See also \cite[p.~485]{MR807149}.

\begin{lemma}\label{l.kolmogorov}
Assume that  $1\le p<p^*<\infty$,
$B\subset X$ is a ball and $u\colon B\to \R$
is a measurable function. Assume that there exists a constant
$C_0>0$ such that
\[
\mu(\{x\in B\,:\, \lvert u(x)\rvert > s\})\,s^{p^*}\le C_0 
\]
for each $s>0$. Then
\[
\bigg(\vint_B \lvert u(x)\rvert^p\,d\mu(x)\bigg)^{1/p}\le 2^{1/p}\bigg(\frac{C_0 p}{p^*-p}\bigg)^{1/p^*} \mu(B)^{-1/p^*}\,.
\]
\end{lemma}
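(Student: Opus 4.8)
The plan is the standard Kolmogorov-type argument: pass to the distribution function of $u$, exploit the two competing bounds available for it, and optimize a splitting parameter. Write $\lambda(s)=\mu(\{x\in B:\lvert u(x)\rvert>s\})$ for $s>0$. Since $\lvert u\rvert^p$ is a nonnegative measurable function on $B$, Tonelli's theorem gives the Cavalieri identity
\[
\int_B \lvert u(x)\rvert^p\,d\mu(x)=p\int_0^\infty s^{p-1}\lambda(s)\,ds.
\]
At this stage we have two estimates for $\lambda(s)$: the trivial bound $\lambda(s)\le\mu(B)$, which is good for small $s$, and the hypothesis $\lambda(s)\le C_0 s^{-p^*}$, which is good for large $s$.

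Next I would split the integral at a threshold $t>0$ to be chosen, using the trivial bound on $(0,t)$ and the weak-type bound on $(t,\infty)$:
\[
\int_B \lvert u\rvert^p\,d\mu\le p\,\mu(B)\int_0^t s^{p-1}\,ds+pC_0\int_t^\infty s^{p-1-p^*}\,ds=\mu(B)\,t^p+\frac{pC_0}{p^*-p}\,t^{p-p^*}.
\]
This is the only place where the strict inequality $p<p^*$ is used: it is exactly what makes the second integral converge and produces the factor $1/(p^*-p)$. There is no real obstacle in the proof; the one thing to be careful about is the arithmetic of exponents in the optimization that follows.

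Finally, I would optimize in $t$. Choosing $t=\bigl(\tfrac{C_0 p}{p^*-p}\bigr)^{1/p^*}\mu(B)^{-1/p^*}$ makes the two terms on the right-hand side equal, each of them equal to $\bigl(\tfrac{C_0 p}{p^*-p}\bigr)^{p/p^*}\mu(B)^{1-p/p^*}$, whence
\[
\int_B \lvert u\rvert^p\,d\mu\le 2\Bigl(\frac{C_0 p}{p^*-p}\Bigr)^{p/p^*}\mu(B)^{1-p/p^*}.
\]
Dividing by $\mu(B)$ and taking the $p$-th root yields precisely the claimed inequality, with the constant $2^{1/p}$. (One need not locate the exact minimizer: any $t$ of the correct order already gives the estimate up to a worse constant, and in particular this shows $\int_B\lvert u\rvert^p\,d\mu<\infty$; the balancing choice above is what produces the sharp-looking constant stated in the lemma.)
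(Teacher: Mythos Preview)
Your argument is correct and is exactly the standard Kolmogorov argument. The paper itself does not give a proof of this lemma; it merely states that the result is essentially \cite[Lemma~4.22]{MR1800917} (see also \cite[p.~485]{MR807149}), so there is no in-paper proof to compare against, but your Cavalieri--split--optimize computation is precisely the one underlying those references.
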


If
$p\le q$, then $(\beta,p,p,q)$-Poincar\'e inequalities follow from Lemma \ref{l.qp}.
The following lemma shows that these inequalities hold 
also in the case $p> q$,  at least under an additional mild geometric assumption. To establish this useful result, we  adapt the proof of \cite[Theorem 3.4]{MR4649157}, see also \cite[pp.\ 95--97]{MR2867756}. The outline of this argument
is originally from \cite{MR1336257}.

\begin{lemma}\label{l.qp_stronger}
Assume that $\mu$ satisfies the quantitative reverse doubling condition \eqref{e.reverse_doubling}, with
constants $\sigma>0$ and $c_\sigma>0$.
Assume that $1\le p,q<\infty$ and $0<\beta<1$. Then
$X$ supports the $(\beta,p,p,q)$-Poincar\'e inequality \eqref{e.poinc}
with constants $\lambda=2$ and $c_P=c_P(\beta,p,q,c_\mu,\sigma,c_\sigma)$.
\end{lemma}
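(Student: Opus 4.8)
The plan is to deduce the $(\beta,p,p,q)$-Poincar\'e inequality for $p>q$ from the borderline case already available, namely the $(\beta,q,q,q)$-Poincar\'e inequality from Lemma~\ref{l.qp} (with $\lambda=1$), via a bootstrapping argument on the scale of balls that exploits the reverse doubling decay \eqref{e.reverse_doubling}. The case $p\le q$ is covered by Lemma~\ref{l.qp} directly, so we assume $q<p$. Fix a ball $B_0=B(x_0,r_0)$ and a function $u$ that is integrable on balls; set $g=G_{u,\beta,q,2B_0}$ so that $G_{u,\beta,q,2B}\le g$ for every ball $B\subset 2B_0$ centered in $B_0$ by the monotonicity~\eqref{e.monotonicity}. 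It suffices to bound $\bigl(\vint_{B_0}|u-u_{B_0}|^p\,d\mu\bigr)^{1/p}$ by $Cr_0^{\beta}\bigl(\vint_{2B_0}g^p\,d\mu\bigr)^{1/p}$.

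The core of the argument is a weak-type estimate for the oscillation, obtained by a telescoping/chaining argument along a dyadic sequence of concentric balls. For $x\in B_0$ write $B_j=B(x,2^{-j}r_0)$, $j\ge 0$; using the $(\beta,q,q,q)$-Poincar\'e inequality on each $B_j$ and summing the telescoping differences $|u_{B_{j+1}}-u_{B_j}|$, together with the doubling property and the reverse doubling inequality $\mu(B_{j})\ge c\,(2^{-j})^{\sigma}\mu(B_0)\cdot(\text{something})$ — more precisely comparing $\mu(B(x,2^{-j}r_0))$ to $\mu(B_0)$ — one gets a pointwise bound of the form
\[
|u(x)-u_{B_0}|\le C\sum_{j\ge 0} (2^{-j}r_0)^{\beta}\Bigl(\vint_{B_j} g^q\,d\mu\Bigr)^{1/q}
\le C r_0^{\beta}\sum_{j\ge 0}2^{-j\beta}\Bigl(\vint_{B_j}g^q\,d\mu\Bigr)^{1/q}
\]
for a.e.\ $x\in B_0$ (the pointwise limit $u_{B_j}\to u(x)$ holds a.e.\ by the Lebesgue differentiation theorem, which is available since $\mu$ is doubling). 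The sum on the right is a discrete fractional-integral (Riesz-potential-type) operator applied to $g^q$; the reverse doubling exponent $\sigma$ enters precisely to control $\mu(B_0)/\mu(B_j)$ and hence to make this potential operator satisfy a weak-type bound. Specifically, one estimates, for $s>0$,
\[
\mu\bigl(\{x\in B_0: |u(x)-u_{B_0}|>s\}\bigr)\,s^{p}\le C\,r_0^{\beta p}\Bigl(\int_{2B_0} g^q\,d\mu\Bigr)^{p/q}\mu(2B_0)^{1-p/q},
\]
i.e.\ a weak $(q,p)$-type estimate for the operator $g\mapsto u-u_{B_0}$ restricted to $B_0$, valid because $q<p$ so the kernel $\sum_j 2^{-j\beta}(\mu(B_0)/\mu(B_j))^{1/q}$ (summed against a rearrangement of $g^q$) converges to a finite power — this is exactly where the argument of \cite[Theorem~3.4]{MR4649157} / \cite[pp.~95--97]{MR2867756} is adapted.

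With this weak-type bound in hand, apply the Kolmogorov-type Lemma~\ref{l.kolmogorov} with $p^*=p$ and the exponent $p$ replaced by any fixed exponent strictly below $p$ — wait, more carefully: Lemma~\ref{l.kolmogorov} upgrades a weak-$p^*$ bound for $|u-u_{B_0}|$ into a strong $L^{p}$-bound for any $p<p^*$, so to reach the target exponent $p$ one runs the weak-type estimate at a slightly larger exponent $p^*>p$ (still $>q$), which the chaining argument delivers with $\sigma$-dependent constants as long as $p^*$ stays finite. Then Lemma~\ref{l.kolmogorov} gives
\[
\Bigl(\vint_{B_0}|u-u_{B_0}|^{p}\,d\mu\Bigr)^{1/p}\le C\,\mu(B_0)^{-1/p^*}C_0^{1/p^*}\le C\,r_0^{\beta}\Bigl(\vint_{2B_0}g^{q}\,d\mu\Bigr)^{1/q}\le C\,r_0^{\beta}\Bigl(\vint_{2B_0}g^{p}\,d\mu\Bigr)^{1/p},
\]
where the last step is H\"older's inequality since $q<p$. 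Recalling $g=G_{u,\beta,q,2B_0}$ and tracking constants gives \eqref{e.poinc} with $\lambda=2$ and $c_P=c_P(\beta,p,q,c_\mu,\sigma,c_\sigma)$. The main obstacle is the careful bookkeeping in the chaining step: one must organize the telescoping sum so that the measures $\mu(B_j)$ are handled by reverse doubling in a way that yields a genuinely summable kernel after the appropriate rearrangement, and choose the auxiliary exponent $p^*\in(p,\infty)$ cleanly — the choice of $\lambda=2$ rather than $\lambda=1$ is forced here because the balls $B_j$ used in the chaining for $x$ near $\partial B_0$ can stick out of $B_0$ but remain inside $2B_0$.
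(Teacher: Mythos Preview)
Your overall strategy---chaining along dyadic balls to get a pointwise Riesz-potential-type bound, then a weak-type level-set estimate, then Kolmogorov's lemma (Lemma~\ref{l.kolmogorov})---is exactly the paper's route. The genuine gap is in the chaining step: you run it with the $(\beta,q,q,q)$-Poincar\'e inequality, which places $q$-averages $(\vint_{B_j} g^q)^{1/q}$ on the right-hand side of the pointwise bound. The Haj\l asz--Koskela/Hedberg splitting of the resulting fractional sum then yields only a weak-$L^{q^*}$ estimate with $q^* = Qq/(Q-\beta q)$, where $Q$ is the \emph{forward} doubling exponent from~\eqref{e.doubling_quant}. Your assertion that reverse doubling controls $\mu(B_0)/\mu(B_j)$ is backwards: a lower bound on $\mu(B_j)$ comes from $Q$-doubling, and since $Q$ can only be enlarged, $q^*$ can only be pushed \emph{down} toward $q$. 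There is thus no mechanism to force $q^*>p$; for instance in $\mathbb{R}^n$ with $n\ge 2$, $\beta=\tfrac12$, $q=2$, $p=10$ one has $q^*=2n/(n-1)\le 4<p$, and Kolmogorov cannot recover the strong $L^p$ bound.

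The paper repairs this by using instead the $(\beta,1,p,q)$-Poincar\'e inequality from Lemma~\ref{l.qp} (same lemma, just $t=1$ and third index equal to $p$), so the chaining carries $p$-averages $(\vint_{B_j} G_{u,\beta,q,2B}^{\,p})^{1/p}$ and the weak-type estimate~\eqref{e.mazya_goal} lands at $p^*=Qp/(Q-\beta p)>p$ after enlarging $Q$ so that $Q>\beta p$. Kolmogorov then gives the strong $L^p$ bound directly with $\lVert G_{u,\beta,q,2B}\rVert_{L^p(2B)}$ on the right; no final H\"older step is needed. Two smaller omissions: the paper first assumes $u\in L^\infty(2B)$ and removes this by truncation $u_k=\max\{-k,\min\{u,k\}\}$ together with Fatou's lemma; and the reverse-doubling hypothesis enters not in the chaining but inside the Maz$'$ya-type weak-type argument adapted from \cite[Theorem~3.4]{MR4649157}, whose constant carries the dependence on $\sigma$ and $c_\sigma$.
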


\begin{proof}
Since $\mu$ is doubling, also the quantitative doubling condition \eqref{e.doubling_quant} holds, with exponent $Q>0$ and constant $c_Q>0$, both depending on $c_\mu$ only. By
increasing $Q$ if necessary, we may assume that
$Q=Q(c_\mu,\beta,p)>\beta p$. We denote  $p^*=Qp/(Q-\beta p)>p$.

Let $B=B(x_0,r)$ be a ball  and let $u\in L^\infty(2B)$.
By adapting the proof of \cite[Theorem 3.4]{MR4649157}, we obtain a constant $C=C(p,\beta,\sigma,c_\mu,c_\sigma)$ such that
\begin{equation}\label{e.mazya_goal}
\begin{split}
\mu(\{x\in B :  \lvert u(x)-u_{2B}\rvert > s\})\, s^{p^*}
\le Cr^{\beta p^*}\mu(B)^{1-p^*/p}\biggl( \int_{2B} G_{u,\beta,q,2B}(y)^p
\,d\mu(y)\biggr)^{\frac{p^*}{p}}
\end{split}
\end{equation}
for all $s>0$.
More specifically,  it is clear from the proof that the $(\beta,1,p,q)$-Poincar\'e inequality
given by Lemma \ref{l.qp} suffices for our adaptation; in particular, this inequality is used to replace both
$(\beta,p,p,p)$-Poincar\'e and $(\beta,1,p,p)$-Poincar\'e inequalities
in the proof of \cite[Theorem 3.4]{MR4649157}.
 Lemma \ref{l.kolmogorov} then implies
\begin{equation}\label{e.general}
\begin{split}
\bigg(\vint_B \lvert u(x)-u_B\rvert^p\,d\mu(x)\bigg)^{1/p}
&\le 
2\bigg(\vint_B \lvert u(x)-u_{2B}\rvert^p\,d\mu(x)\bigg)^{1/p}\\
&\le Cr^{\beta}\mu(B)^{1/p^*-1/p}\biggl( \int_{2B} G_{u,\beta,q,2B}(y)^p\,d\mu(y)\biggr)^{\frac{1}{p}}\mu(B)^{-1/p^*}\\
&\le  Cr^{\beta}\biggl( \vint_{2B} G_{u,\beta,q,2B}(y)^p\,d\mu(y)\biggr)^{\frac{1}{p}}\,. 
\end{split}
\end{equation}
If $u\colon X\to \R$ is merely integrable on balls, then the  $(\beta,p,p,q)$-Poincar\'e inequality
follows by applying \eqref{e.general}
to the functions $u_k=\max\{-k,\min\{u,k\}\}\in L^\infty(2B)$, $k\in\N$, and then using Fatou's lemma. Indeed, observe 
that $\liminf_{k\to\infty} \lvert u_k(x)-(u_k)_{B}\rvert^p=\lvert u(x)-u_{B}\rvert^p$ 
for all $x\in X$ and that $G_{u_k,\beta,q,2B}\le G_{u,\beta,q,2B}$ for all
$k\in\N$.
\end{proof}

\section{Fractional relative capacity}\label{s.capacitary}

Here  we define a variant  of the 
fractional relative capacity, 
compare to~\cite[Definition~7.1]{MR3605166} and see  also \cite{MR3331699} and \cite[\S 11]{Mazya2011}.
In Lemma~\ref{l.frac_cap_balls} we give estimates for relative capacities of balls.
Lower bounds for relative capacities of closed sets in terms of a 
Hausdroff content are given in Lemma~\ref{l.codim}. At the end of this section we recall a fractional version of
Maz'ya's capacitary Poincar\'e inequality.

\begin{definition}\label{d.frac_capacity}
Let $0<\beta<1$, $1\le q,p<\infty$ and $\Lambda\ge 2$. 
Let $B\subset X$ be a ball and let $E\subset \iol{B}$ be a closed set. Then we write
\[
\cp_{\beta,p,q} (E,2B,\Lambda B) = \inf_\varphi \int_{\Lambda B} \biggl( \int_{\Lambda B}\frac{|\varphi(x)-\varphi(y)|^q}{d(x,y)^{\beta q}\mu(B(x,d(x,y)))}\,d\mu(y)\biggr)^{p/q}\,d\mu(x)\,,
\]
where the infimum is taken over all continuous functions 
$\varphi\colon X\to \R$  such that $\varphi(x)\ge 1$ for every $x\in E$ and 
$\varphi(x)=0$ for every $x\in X\setminus 2B$.
\end{definition}

 \begin{remark}\label{r.fremark}
Suppose that $\varphi\colon X\to \R$  is a continuous function such that $\varphi(x)\ge 1$ for every $x\in E$ and 
$\varphi(x)=0$ for every $x\in X\setminus 2B$, where $B$ and $E$ are
as in Definition \ref{d.frac_capacity}.
By considering the function $v=\max\{0,\min\{\varphi,1\}\}$
and using inequality~\eqref{e.monotonicity}, it is clear
that the infimum in Definition \ref{d.frac_capacity} can be taken
either over {\em all continuous and bounded functions} or 
over {\em all continuous functions that are integrable on balls},  in both cases such that $\varphi(x)\ge 1$ for every $x\in E$ and 
$\varphi(x)=0$ for every $x\in X\setminus 2B$. 
In particular, our Definition \ref{d.frac_capacity} coincides with the relative fractional capacity defined in \cite[Definition 4.1]{MR4649157}.
\end{remark} 

Let us  estimate relative capacities of balls. 
Lemma \ref{l.frac_cap_balls}(i) and a monotonicity argument show that the
fractional relative capacities in Definition~\ref{d.frac_capacity} are always finite. 
In order to prove this, we use the 
following lemma from \cite[Lemma 4.2]{MR4649157}.

\begin{lemma}\label{l.alpha}
Let $\alpha>0$.
There is a constant $C(\alpha,c_\mu)>0$ such that
\[
\int_{B(x,r)} \frac{d(x,y)^{\alpha}}{\mu(B(x,d(x,y)))} \, d\mu(y)\leq C(\alpha,c_\mu)   r^{\alpha}\
\]
for every $x\in X$ and $r>0$.
\end{lemma}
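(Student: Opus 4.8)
The plan is the standard dyadic annular decomposition of the ball around $x$. First I would split $B(x,r)$ into the annuli $A_j=B(x,2^{-j}r)\setminus B(x,2^{-j-1}r)$, $j=0,1,2,\dots$, noting that $B(x,r)\setminus\{x\}=\bigcup_{j\ge0}A_j$ is a disjoint union and that, since $\alpha>0$, the integrand is naturally understood to vanish on the diagonal $\{y:d(x,y)=0\}$; hence the single point $y=x$ (even if it carries positive $\mu$-measure) contributes nothing and
\[
\int_{B(x,r)}\frac{d(x,y)^\alpha}{\mu(B(x,d(x,y)))}\,d\mu(y)=\sum_{j=0}^\infty\int_{A_j}\frac{d(x,y)^\alpha}{\mu(B(x,d(x,y)))}\,d\mu(y).
\]

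Next, on a fixed annulus $A_j$ I would estimate the numerator from above and the denominator from below. For $y\in A_j$ we have $d(x,y)<2^{-j}r$, so $d(x,y)^\alpha<(2^{-j}r)^\alpha$, and $d(x,y)\ge 2^{-j-1}r$, so $B(x,2^{-j-1}r)\subset B(x,d(x,y))$; the doubling condition \eqref{e.doubling} then gives
\[
\mu(B(x,d(x,y)))\ge\mu\bigl(B(x,2^{-j-1}r)\bigr)\ge c_\mu^{-1}\mu\bigl(B(x,2^{-j}r)\bigr)\ge c_\mu^{-1}\mu(A_j).
\]
Combining these two bounds yields $\int_{A_j}\frac{d(x,y)^\alpha}{\mu(B(x,d(x,y)))}\,d\mu(y)\le c_\mu(2^{-j}r)^\alpha$; an annulus $A_j$ that happens to be empty simply contributes $0$ and causes no difficulty.

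Finally I would sum the resulting geometric series,
\[
\int_{B(x,r)}\frac{d(x,y)^\alpha}{\mu(B(x,d(x,y)))}\,d\mu(y)\le c_\mu r^\alpha\sum_{j=0}^\infty 2^{-j\alpha}=\frac{c_\mu}{1-2^{-\alpha}}\,r^\alpha,
\]
which is the asserted inequality with $C(\alpha,c_\mu)=c_\mu/(1-2^{-\alpha})$. I do not anticipate any real obstacle: the only mild points of care are the convention at $y=x$ and the convergence of the series, which is precisely where the hypothesis $\alpha>0$ enters.
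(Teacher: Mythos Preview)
Your proof is correct; the dyadic annular decomposition together with the doubling condition is exactly the standard argument for this estimate. The paper does not actually prove this lemma but simply cites \cite[Lemma~4.2]{MR4649157}, so there is no in-paper proof to compare against; the argument in that reference is essentially the one you have written.
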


\begin{lemma}\label{l.frac_cap_balls}
Let $1\le p,q<\infty$ and  $0<\beta <1$.
\begin{itemize}
\item[(i)]
 Assume that $\Lambda \ge 2$, 
$x_0\in X$ and $r>0$. Then
\[
\mathrm{cap}_{\beta,p,q} (\overline{B(x_0,r)},B(x_0,2r),B(x_0,\Lambda r))\le  C(c_\mu,\beta,p,q,\Lambda) r^{-\beta p}\mu(B(x_0,r))\,.
\]
\item[(ii)]
 Assume that $X$ is connected
and that  $\Lambda> 2$.
Assume that $x_0\in X$ and $r>0$ is such that $r<(1/8)\diam(X)$.
Then
\[
r^{-\beta p}\mu(B(x_0,r))\le C(\beta,p,q, c_\mu,\Lambda)\mathrm{cap}_{\beta,p,q} (\overline{B(x_0,r)},B(x_0,2r),B(x_0,\Lambda r))\,.
\]
\end{itemize}
\end{lemma}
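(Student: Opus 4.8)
For part (i), the plan is to exhibit an explicit admissible test function. I would take the standard cutoff
\[
\varphi(x)=\max\Bigl\{0,\ \min\Bigl\{1,\ 2-\tfrac{d(x,x_0)}{r}\Bigr\}\Bigr\},
\]
which is continuous, equals $1$ on $\overline{B(x_0,r)}$, vanishes outside $B(x_0,2r)$, and is $(1/r)$-Lipschitz. Plugging this into the energy integral of Definition \ref{d.frac_capacity}, the integrand is supported on $x\in B(x_0,2r)$, and I split the inner integral over $\Lambda B=B(x_0,\Lambda r)$ into the region $d(x,y)<r$, where I use $|\varphi(x)-\varphi(y)|\le d(x,y)/r$, and the region $d(x,y)\ge r$, where I use the crude bound $|\varphi(x)-\varphi(y)|\le 1$. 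The first region is estimated by Lemma \ref{l.alpha} with $\alpha=(1-\beta)q$, giving a contribution $\lesssim r^{-q}\cdot r^{(1-\beta)q}=r^{-\beta q}$; the second region, after using $d(x,y)^{-\beta q}\le r^{-\beta q}$ on $d(x,y)\ge r$, is handled by Lemma \ref{l.alpha} with $\alpha=0$ (or directly by the doubling condition, since $\int_{B(x,\Lambda r)}\mu(B(x,d(x,y)))^{-1}\,d\mu(y)\lesssim\log$-type bounds; more cleanly one estimates with $\alpha$ a small positive number and $d(x,y)^\alpha\ge$ a constant times $r^\alpha$ on an annulus, summing a geometric series). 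Either way the inner integral is $\lesssim r^{-\beta q}$ uniformly in $x$, so raising to the power $p/q$ and integrating over $x\in B(x_0,2r)$ yields $\lesssim r^{-\beta p}\mu(B(x_0,2r))\lesssim r^{-\beta p}\mu(B(x_0,r))$ by doubling. The constant depends only on $c_\mu,\beta,p,q,\Lambda$ as claimed.

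For part (ii), the plan is to use the lower bound coming from a Poincar\'e inequality, in the spirit of the local theory. Let $\varphi$ be any admissible test function for $\cp_{\beta,p,q}(\overline{B(x_0,r)},B(x_0,2r),B(x_0,\Lambda r))$. The key point is that $\varphi\ge 1$ on $B(x_0,r)$ while $\varphi=0$ on $X\setminus B(x_0,2r)$; since $X$ is connected and $r<\diam(X)/8$, the ball $B(x_0,2\Lambda r)$ — and hence $\Lambda B$ after a harmless rescaling — contains points where $\varphi=0$, so the average $\varphi_{\Lambda B}$ is controlled away from $1$: concretely, using the reverse-doubling/connectedness consequence that $\mu(B(x_0,\Lambda r)\setminus B(x_0,2r))\ge c\,\mu(B(x_0,\Lambda r))$ for $\Lambda>2$, one gets $\varphi_{\Lambda B}\le 1-c$ for a dimensional constant $c\in(0,1)$, and therefore $|\varphi(x)-\varphi_{\Lambda B}|\ge c$ for every $x\in B(x_0,r)$. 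Then
\[
\mu(B(x_0,r))\,c^{p}\le \int_{\Lambda B}|\varphi(x)-\varphi_{\Lambda B}|^{p}\,d\mu(x)
\lesssim \mu(\Lambda B)\,(\Lambda r)^{\beta p}\vint_{\Lambda'\Lambda B}G_{\varphi,\beta,q,\Lambda'\Lambda B}(x)^{p}\,d\mu(x),
\]
where in the last step I apply the $(\beta,p,p,q)$-Poincar\'e inequality from Lemma \ref{l.qp_stronger} on the ball $\Lambda B$ (valid since $\mu$ connected implies reverse doubling; its dilation constant is $\lambda=2$, so one should work from the outset with, say, $\Lambda/2$-balls, or simply observe that $G_{\varphi,\beta,q,2\Lambda B}\le G_{\varphi,\beta,q,\Lambda B'}$ where $B'$ is a slightly larger concentric ball still inside the domain of $\varphi$'s energy — here one has to be a little careful that $\lambda B$ stays inside $B(x_0,\Lambda r)$, which is why the statement requires the strict inequality $\Lambda>2$). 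Rearranging and using $\mu(\Lambda B)\lesssim\mu(B(x_0,r))$ by doubling, followed by $G_{\varphi,\beta,q,\Lambda B}\le G_{\varphi,\beta,q,\Lambda B}$ trivially and the energy integral in the definition, gives
\[
r^{-\beta p}\mu(B(x_0,r))\lesssim \int_{\Lambda B}G_{\varphi,\beta,q,\Lambda B}(x)^{p}\,d\mu(x).
\]
Taking the infimum over admissible $\varphi$ yields the claim, with the constant depending on $\beta,p,q,c_\mu,\Lambda$.

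The main obstacle I anticipate is the bookkeeping in part (ii) with the dilation constant $\lambda=2$ from Lemma \ref{l.qp_stronger}: one must ensure that the enlarged ball $\lambda(\Lambda B)$ on which the Poincar\'e inequality forces us to integrate the Gagliardo energy still lies inside $B(x_0,\Lambda r)$, the domain where the test function's energy is being measured in the definition of the capacity. The clean fix is to run the argument with the concentric ball $B(x_0,(\Lambda/2)r)$ in place of $\Lambda B$ when invoking Lemma \ref{l.qp_stronger}, so that $\lambda B(x_0,(\Lambda/2)r)=B(x_0,\Lambda r)$; this is exactly why part (ii) needs $\Lambda>2$ rather than $\Lambda\ge 2$. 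The secondary technical point — that connectedness plus doubling forces $\mu(B(x_0,\Lambda r)\setminus B(x_0,2r))$ to be a fixed fraction of $\mu(B(x_0,\Lambda r))$ — follows from the reverse-doubling consequence of connectedness recalled in \eqref{e.rev_dbl_decay}–\eqref{e.reverse_doubling} together with the hypothesis $r<\diam(X)/8$, which guarantees there is genuinely room between the two balls.
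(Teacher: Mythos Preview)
Your overall strategy is sound and matches the paper's in spirit, but there are two points worth flagging.

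For part (i), the split into $d(x,y)<r$ and $d(x,y)\ge r$ is unnecessary: your test function is globally $(1/r)$-Lipschitz, so $|\varphi(x)-\varphi(y)|\le d(x,y)/r$ holds for \emph{all} $x,y$, and a single application of Lemma~\ref{l.alpha} with $\alpha=(1-\beta)q$ on the ball $B(x,2\Lambda r)$ gives the inner integral $\lesssim r^{-\beta q}$ directly. This is exactly what the paper does, and it sidesteps your hedging about the far region (``log-type bounds'', ``small positive $\alpha$''). Also, your claim that the integrand is supported on $x\in B(x_0,2r)$ is false: for $x\in\Lambda B\setminus 2B$ the inner integral still sees $\varphi(y)\ne 0$ for $y\in 2B$. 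This is harmless since the uniform bound on the inner integral holds for all $x\in\Lambda B$, but you should integrate the outer variable over $\Lambda B$ and absorb the factor via doubling.

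For part (ii), your proposed fix has a genuine gap. You invoke Lemma~\ref{l.qp_stronger}, whose dilation constant is $\lambda=2$, and then propose to run the argument on $B(x_0,(\Lambda/2)r)$ so that $2\cdot(\Lambda/2)B=\Lambda B$. But to get $\varphi_{(\Lambda/2)B}$ bounded away from $1$ via reverse doubling you need $(\Lambda/2)B\supsetneq 2B$, i.e.\ $\Lambda>4$; for $2<\Lambda\le 4$ the test function can be identically $1$ on $(\Lambda/2)B$ and your inequality degenerates. The paper avoids this entirely by using the weaker $(\beta,1,p,q)$-Poincar\'e inequality from Lemma~\ref{l.qp}, which holds with $\lambda=1$. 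One first bounds $1\lesssim\vint_{\Lambda B}|\varphi-\varphi_{\Lambda B}|\,d\mu$ (using $\varphi_{\Lambda B}\le c_R<1$ as you outline), then applies Lemma~\ref{l.qp} on the ball $\Lambda B$ itself with no enlargement, landing directly inside $\Lambda B$ on the right-hand side. So the need for $\Lambda>2$ arises only from the reverse-doubling step (to make $\kappa=2/\min\{3,\Lambda\}<1$), not from any dilation in the Poincar\'e inequality. Switching to Lemma~\ref{l.qp} closes your argument cleanly for all $\Lambda>2$.
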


\begin{proof}
We show (i) first. Denote $B=B(x_0,r)$. Let
\[
\varphi(x)=\max \Bigl\{0,1-r^{-1}\dist(x,B)\Bigr\}\,
\]
for every $x\in X$. 
Then $0\le\varphi\le 1$ in $X$, $\varphi=1$ in $\iol{B}$, $\varphi=0$ in $X\setminus 2B$, 
and  $\varphi$ is an $r^{-1}$-Lipschitz function in $X$.
Since $\varphi$ is admissible for the relative capacity, we have
\begin{align*}
\mathrm{cap}_{\beta,p,q} (\overline{B},2B,\Lambda B) &\le \int_{\Lambda B} \biggl( \int_{\Lambda B}\frac{|\varphi(x)-\varphi(y)|^q}{d(x,y)^{\beta q}\mu(B(x,d(x,y)))}\,d\mu(y)\biggr)^{p/q}\,d\mu(x)\,.
\end{align*}
  Fix $x\in \Lambda B$. Since $|\varphi(x)-\varphi(y)| \leq  d(x,y)/r$
  for each $y\in \Lambda B$, by Lemma \ref{l.alpha} we have
  \[
    \int_{\Lambda B} \frac{|\varphi(x)-\varphi(y)|^q}{d(x,y)^{\beta q}\mu(B(x,d(x,y)))}\,d\mu(y) \leq
   r^{-q} \int_{B(x,2\Lambda r)} \frac{d(x,y)^{q(1-\beta)}}{\mu(B(x,d(x,y)))}\,d\mu(y) \leq C(\beta,q,c_\mu,\Lambda) r^{-\beta q}.
  \]
Hence, we have
\[
\mathrm{cap}_{\beta,p,q} (\overline{B},2B,\Lambda B) \le C(\beta,p,q,c_\mu,\Lambda) r^{-\beta p}\mu(\Lambda B)\le C(\beta,p,q,c_\mu,\Lambda) r^{-\beta p}\mu(B)\,,
\]
and condition (i) follows.

Next we prove (ii). Denote by $B=B(x_0,r)$ and  $\lambda=\min\{3,\Lambda\}>2$.
Choose any $\varphi$ from the non-empty set of test functions for
the relative capacity $\cp_{\beta,p,q}(\overline{B},2B,\Lambda B)$.
By replacing $\varphi$ with $\max\{0,\min\{\varphi,1\}\}$, if necessary, we may assume that $0\le \varphi \le 1$ in $X$. 
Thus $\varphi$ is continuous on $X$,  $\varphi=1$ on $\overline{B}$, and $\varphi=0$ on $B\setminus 2B$. 
Since $X$ is connected, there exists a constant $0<c_R=c(c_\mu,\Lambda)<1$ such that 
inequality~\eqref{e.rev_dbl_decay} holds with $\kappa=2/\lambda$.
In particular, since $\lambda r<4r<\diam(X)/2$, we have
 \[
\mu(2B)=\mu(B(x_0,2r))\le c_R\,\mu(B(x_0,\lambda r))\le c_R\,\mu(B(x_0,\Lambda r))= c_R\,\mu(\Lambda B)\,.
\]
Therefore
\begin{align*}
0\le \varphi_{\Lambda B}=\vint_{\Lambda B} \varphi(y)\,d\mu(y) \le \frac{\mu(2B)}{\mu(\Lambda B)}
\le c_R < 1.
\end{align*}
As a consequence, since $\varphi=1$ in $B$, we find that
\begin{align*}
1&=  \frac{1}{1-c_R}\vint_B (1-c_R)\,d\mu(x)\\&\le 
 \frac{1}{1-c_R}\vint_B \lvert \varphi(x)- \varphi_{\Lambda B}\rvert\,d\mu(x)
\le C(c_\mu,\Lambda) \vint_{\Lambda B} \lvert \varphi(x)- \varphi_{\Lambda B}\rvert\,d\mu(x)\,.
\end{align*}
Applying the $(\beta,1,p,q)$-Poincar\'e inequality, see Lemma \ref{l.qp}, we get
\[
1\le C(\beta,p,q,c_\mu,\Lambda) r^{\beta p}\vint_{\Lambda B}\biggl( \int_{\Lambda B}\frac{|\varphi(x)-\varphi(y)|^q}{d(x,y)^{\beta q}\mu(B(x,d(x,y)))}\,d\mu(y)\biggr)^{p/q}\,d\mu(x)\,.
\]
Multiplying both sides
by $r^{-\beta p}\mu(\Lambda B)$ and using
inequality $\mu(B)\le \mu(\Lambda B)$, we get
\[
r^{-\beta p}\mu(B)\le C(\beta,p,q,c_\mu,\Lambda) \int_{\Lambda B}\biggl( \int_{\Lambda B}\frac{|\varphi(x)-\varphi(y)|^q}{d(x,y)^{\beta q}\mu(B(x,d(x,y)))}\,d\mu(y)\biggr)^{p/q}\,d\mu(x)\,.
\]
The claim (ii) follows by taking infimum over all test functions $\varphi$ as above.
\end{proof}

\begin{example}\label{e.counter}
Consider the Euclidean space $X=\R^n$ equipped with the
standard Lebesgue measure $\mu$ and the Euclidean distance $d$.
In this setting we have, by computations in \cite[\S 2]{MR2085428}, 
\[\mathrm{cap}_{\beta,p,q} (K,B(x_0,2r),B(x_0,2r))=0\] 
whenever $K\subset B(x_0,2r)$ is a compact set, and $0<\beta<1$ and $1\le p<\infty$ are such that $\beta p<1$.
This shows that the  part (ii) of Lemma \ref{l.frac_cap_balls} does not hold in general with $\Lambda=2$.
\end{example}

We also need capacitary  lower bounds for more general sets than balls,
as in part (ii) of Lemma \ref{l.frac_cap_balls}.
For this purpose, we use  a suitable Hausdorff content.

\begin{definition}\label{d.hcc}
Let $0<\rho\le\infty$ and $d\ge 0$. The $\rho$-restricted Hausdorff content of codimension $d\ge 0$ 
of a set $F\subset X$ is defined by  
\[
\Ha^{\mu,d}_\rho(F)=\inf\Biggl\{\sum_{k} \mu(B(x_k,r_k))\,r_k^{-d} :
F\subset\bigcup_{k} B(x_k,r_k)\text{ and } 0<r_k\leq \rho  \Biggr\}.
\]
\end{definition}

The following lemma is from \cite[Lemma 4.6]{MR4649157}.

\begin{lemma}\label{l.codim}
Let $0<\beta<1$,  $1\le p,q<\infty$, $0\le \eta<p$ and $\Lambda> 2$.
Assume that $\mu$ is 
reverse doubling, with constants $\kappa=2/\Lambda$
and $0<c_R<1$ in~\eqref{e.rev_dbl_decay}.
Let $B=B(x_0,r)\subset X$ be a ball with $r<\diam(X)/(2\Lambda)$,
and assume that $E\subset \iol{B}$ is a closed set.
Then
\[
\mathcal{H}^{\mu,\beta\eta}_{5\Lambda r}(E) \le 
C(\beta,q,p,\eta,c_R,c_\mu,\Lambda) r^{\beta(p-\eta)}\cp_{\beta,p,q}(E,2B,\Lambda B).
\]
\end{lemma}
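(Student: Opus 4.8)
\emph{Proof strategy.} The plan is a Maz'ya-type boxing argument. Fix an arbitrary continuous function $\varphi$ that is admissible for $\cp_{\beta,p,q}(E,2B,\Lambda B)$; by \eqref{e.monotonicity} we may assume $0\le\varphi\le1$. Since $\varphi$ vanishes outside $2B\subset\Lambda B$, and since $r<\diam(X)/(2\Lambda)$ forces $\Lambda r<\diam(X)/2$, the reverse doubling hypothesis \eqref{e.rev_dbl_decay} with $\kappa=2/\Lambda$ gives $\varphi_{\Lambda B}\le\mu(2B)/\mu(\Lambda B)\le c_R<1$. Write $g=G_{\varphi,\beta,q,\Lambda B}$. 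The goal is to extract from $\varphi$ a cover of $E$ by balls of radius at most $5\Lambda r$ whose weighted Hausdorff sum is at most $Cr^{\beta(p-\eta)}\int_{\Lambda B}g^p\,d\mu$, and then take the infimum over $\varphi$.

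For each $x\in E\subset\overline{B}$, I would chain through the balls $B_{-1}=\Lambda B$ and $B_j=B(x,2^{-j}r)$ for $j\ge0$; since $\Lambda>2$, every $B_j$ with $j\ge0$ lies in $\Lambda B$. By continuity $\varphi_{B_j}\to\varphi(x)\ge1$, so
\[
1-c_R\le\lvert\varphi(x)-\varphi_{\Lambda B}\rvert\le\lvert\varphi_{B_0}-\varphi_{\Lambda B}\rvert+\sum_{j\ge0}\lvert\varphi_{B_{j+1}}-\varphi_{B_j}\rvert .
\]
Estimating each difference by passing to the larger (and, by \eqref{e.doubling}, comparable) of the two balls, applying the $(\beta,1,p,q)$-Poincar\'e inequality of Lemma~\ref{l.qp} — available because $1\le\min\{p,q\}$ and it needs only doubling — and using the monotonicity $G_{\varphi,\beta,q,B_j}\le g$ from \eqref{e.monotonicity}, one obtains $1\le C\sum_{j\ge-1}s_j^\beta\bigl(\vint_{B_j}g^p\,d\mu\bigr)^{1/p}$, where $s_j=2^{-j}r$ for $j\ge0$ and $s_{-1}=\Lambda r$. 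Since $\sum_{j\ge-1}(s_j/r)^\epsilon<\infty$ for any fixed $\epsilon>0$, a pigeonhole argument yields an index $j(x)$ and a ball $D_x=B_{j(x)}$ of radius $\rho_x=s_{j(x)}$, with $D_x\subset\Lambda B$ and $x\in D_x$, such that $\rho_x^\beta\bigl(\vint_{D_x}g^p\,d\mu\bigr)^{1/p}\ge c(\rho_x/r)^\epsilon$; rearranging,
\[
\mu(D_x)\,\rho_x^{-\beta\eta}\le C\,r^{\epsilon p}\,\rho_x^{(\beta-\epsilon)p-\beta\eta}\int_{D_x}g^p\,d\mu .
\]

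Now I choose $\epsilon=\beta(p-\eta)/p$, which is strictly positive precisely because $\eta<p$; then $(\beta-\epsilon)p-\beta\eta=0$ and $r^{\epsilon p}=r^{\beta(p-\eta)}$, so $\mu(D_x)\rho_x^{-\beta\eta}\le Cr^{\beta(p-\eta)}\int_{D_x}g^p\,d\mu$ for every $x\in E$. The family $\{D_x\}_{x\in E}$ covers $E$ and has radii bounded by $\Lambda r$, so the basic $5r$-covering lemma (using separability of $X$) produces a countable pairwise disjoint subfamily $\{D_{x_k}\}$ with $E\subset\bigcup_k 5 D_{x_k}$ and radii $5\rho_{x_k}\le5\Lambda r$. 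Using \eqref{e.doubling} to absorb the dilation in the measure, then the per-ball bound, and finally disjointness together with $D_{x_k}\subset\Lambda B$,
\[
\Ha^{\mu,\beta\eta}_{5\Lambda r}(E)\le\sum_k\mu(5D_{x_k})(5\rho_{x_k})^{-\beta\eta}\le C\sum_k\mu(D_{x_k})\rho_{x_k}^{-\beta\eta}\le Cr^{\beta(p-\eta)}\sum_k\int_{D_{x_k}}g^p\,d\mu\le Cr^{\beta(p-\eta)}\int_{\Lambda B}g^p\,d\mu .
\]
Taking the infimum over admissible $\varphi$ gives the claimed inequality, with $C=C(\beta,q,p,\eta,c_R,c_\mu,\Lambda)$.

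The routine ingredients — comparability of consecutive balls under doubling, the exact Poincar\'e bookkeeping, and the pigeonhole constant — are standard. The genuinely delicate point is the choice of $\epsilon$: the telescoping supplies only the scale factor $s_j^\beta$, and $\epsilon$ must be small enough to absorb the Hausdorff codimension $\beta\eta$ completely, which is exactly where $\eta<p$ enters and why the content can be taken with codimension $\beta\eta$ and not more. A secondary subtlety is the oversized first ball $\Lambda B$ in each chain: it is unavoidable since the only inexpensive way to know that $\varphi$ has small average is reverse doubling applied to $\Lambda B$, and this is precisely why the statement is phrased with the content at the dilated scale $5\Lambda r$.
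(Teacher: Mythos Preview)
The paper does not prove Lemma~\ref{l.codim}; it is quoted verbatim from \cite[Lemma~4.6]{MR4649157} without argument, so there is no in-paper proof to compare against. Your proof is correct and is precisely the Maz'ya-type boxing/chaining argument one expects in the cited source: truncate the test function, use reverse doubling to force $\varphi_{\Lambda B}\le c_R$, telescope from $\Lambda B$ down to $x$ via the $(\beta,1,p,q)$-Poincar\'e inequality of Lemma~\ref{l.qp}, pigeonhole with weight $(s_j/r)^{\epsilon}$ for the critical exponent $\epsilon=\beta(p-\eta)/p$, and finish with the $5r$-covering lemma and disjointness. Two minor remarks: the reduction to $0\le\varphi\le1$ is really Remark~\ref{r.fremark} rather than \eqref{e.monotonicity}; and your observation that $\epsilon>0$ exactly encodes $\eta<p$ is the right explanation for why the codimension can be pushed up to, but not beyond, $\beta p$.
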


We also need the following fractional version of Maz$'$ya's capacitary Poincar\'e inequality from \cite[Lemma 4.3]{MR4649157}.

\begin{lemma}\label{t.Mazya}
Let $1\le p\le t<\infty$, $1\leq q < \infty$, $0<\beta<1$,  and $\Lambda\ge 2$. 
Assume that $X$ supports a $(\beta,t,p,q)$-Poincar\'e inequality
with constants $c_P>0$ and $\lambda\ge 1$.
Let $u\colon X\to \R $ be a continuous function and let
\[
Z=\{x\in X : u(x)=0 \}.
\] 
Then, for all balls $B=B(x_0,r)\subset X$,
\begin{equation}\label{e.foM}\begin{split}
\biggl( \intav_{\Lambda B} |u(x)|^t\, d\mu(x)  \biggr)^{p/t}
\le \frac{C(\beta,q,p,c_\mu,c_P,\Lambda)}{\cp_{\beta,p,q}(\iol{B}\cap Z, 2B,\Lambda B)}
  \int_{\lambda\Lambda B} G_{u,\beta,q,\lambda\Lambda B}(x)^p \,d\mu(x).
  \end{split}
\end{equation}
\end{lemma}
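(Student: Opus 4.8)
The plan is to test the capacity $\cp_{\beta,p,q}(\iol{B}\cap Z,2B,\Lambda B)$ with a function built from $u$ and a Lipschitz cutoff, and then to control the resulting lower-order error term by the assumed Poincar\'e inequality. First I would reduce to bounded $u$: replacing $u$ by the truncations $u_k=\max\{-k,\min\{u,k\}\}$ leaves $Z$ unchanged, does not increase $G_{u,\beta,q,A}$ by~\eqref{e.monotonicity}, and $\lvert u_k\rvert^t\uparrow\lvert u\rvert^t$ pointwise, so the general case follows from the bounded one by monotone convergence. One may also assume $0<\gamma<\infty$, where $\gamma:=\bigl(\intav_{\Lambda B}\lvert u\rvert^t\,d\mu\bigr)^{1/t}$, and that $\cp_{\beta,p,q}(\iol{B}\cap Z,2B,\Lambda B)>0$, the remaining cases being trivial.

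Next, set $\eta(x)=\max\{0,\min\{1,2-r^{-1}d(x,x_0)\}\}$, an $r^{-1}$-Lipschitz function with $0\le\eta\le1$, $\eta\equiv1$ on $\iol{B}$, $\eta\equiv0$ on $X\setminus 2B$, and put $v=\Phi(u)$ with $\Phi(s)=\min\{1,(2-8\gamma^{-1}\lvert s\rvert)_+\}$; thus $0\le v\le1$, $v$ is continuous, $v\equiv1$ on $\{\lvert u\rvert\le\gamma/8\}$, $v\equiv0$ on $\{\lvert u\rvert\ge\gamma/4\}$, and $\tfrac{\gamma}{8}\Phi$ is $1$-Lipschitz. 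Then $\varphi:=\eta v$ is continuous, equals $1$ on $\iol{B}\cap Z$ and vanishes on $X\setminus 2B$, so it is admissible and $\cp_{\beta,p,q}(\iol{B}\cap Z,2B,\Lambda B)\le\int_{\Lambda B}G_{\varphi,\beta,q,\Lambda B}(x)^p\,d\mu(x)$. Using $\lvert\varphi(x)-\varphi(y)\rvert\le\lvert v(x)-v(y)\rvert+v(x)\lvert\eta(x)-\eta(y)\rvert$, the bound $G_{v,\beta,q,\Lambda B}\le 8\gamma^{-1}G_{u,\beta,q,\lambda\Lambda B}$ (from~\eqref{e.monotonicity} applied to $\tfrac{\gamma}{8}\Phi$ and to $\Lambda B\subset\lambda\Lambda B$), the pointwise bound $v(x)\le\mathbf{1}_{\{\lvert u\rvert<\gamma/4\}}(x)$, and the estimate
\[
\int_{\Lambda B}\frac{\lvert\eta(x)-\eta(y)\rvert^q}{d(x,y)^{\beta q}\mu(B(x,d(x,y)))}\,d\mu(y)\le C(\beta,q,c_\mu,\Lambda)\,r^{-\beta q},\qquad x\in\Lambda B,
\]
which follows from $\lvert\eta(x)-\eta(y)\rvert\le\min\{1,r^{-1}d(x,y)\}$, Lemma~\ref{l.alpha} on $\{d(x,y)\le r\}$, and a dyadic decomposition of $\{r<d(x,y)<2\Lambda r\}$, one obtains
\[
\cp_{\beta,p,q}(\iol{B}\cap Z,2B,\Lambda B)\le C\Bigl(\gamma^{-p}\!\int_{\lambda\Lambda B}\!G_{u,\beta,q,\lambda\Lambda B}(x)^p\,d\mu(x)+r^{-\beta p}\mu(\mathcal F)\Bigr),\quad \mathcal F:=\{x\in\Lambda B:\lvert u(x)\rvert<\gamma/4\}.
\]

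It then remains to prove the auxiliary estimate $\gamma^p r^{-\beta p}\mu(\mathcal F)\le C\int_{\lambda\Lambda B}G_{u,\beta,q,\lambda\Lambda B}(x)^p\,d\mu(x)$; multiplying the last display by $\gamma^p$ and inserting this yields $\cp_{\beta,p,q}(\iol{B}\cap Z,2B,\Lambda B)\,\gamma^p\le C\int_{\lambda\Lambda B}G_{u,\beta,q,\lambda\Lambda B}^p\,d\mu$, which rearranges to~\eqref{e.foM}. If $\intav_{\lambda\Lambda B}G_{u,\beta,q,\lambda\Lambda B}^p\ge r^{-\beta p}\gamma^p$, the auxiliary estimate is immediate because $\mu(\mathcal F)\le\mu(\lambda\Lambda B)$, so I may assume the reverse inequality. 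Apply the $(\beta,t,p,q)$-Poincar\'e inequality~\eqref{e.poinc} to $u$ on the ball $\Lambda B$ and use the triangle inequality in $L^t(\Lambda B)$: $\gamma\le\bigl(\intav_{\Lambda B}\lvert u-u_{\Lambda B}\rvert^t\bigr)^{1/t}+\lvert u_{\Lambda B}\rvert\le c_P(\Lambda r)^\beta\bigl(\intav_{\lambda\Lambda B}G_{u,\beta,q,\lambda\Lambda B}^p\bigr)^{1/p}+\lvert u_{\Lambda B}\rvert$. If $\lvert u_{\Lambda B}\rvert\le\gamma/2$, this gives $\gamma^p\le 2^pc_P^p\Lambda^{\beta p}r^{\beta p}\intav_{\lambda\Lambda B}G_{u,\beta,q,\lambda\Lambda B}^p$, and the estimate follows since $\mu(\mathcal F)\le\mu(\lambda\Lambda B)$. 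If $\lvert u_{\Lambda B}\rvert>\gamma/2$, then $\lvert u-u_{\Lambda B}\rvert>\gamma/4$ on $\mathcal F$, so Chebyshev and the Poincar\'e inequality give $\mu(\mathcal F)\le C\gamma^{-t}r^{\beta t}\mu(\Lambda B)\bigl(\intav_{\lambda\Lambda B}G_{u,\beta,q,\lambda\Lambda B}^p\bigr)^{t/p}$; since $t\ge p$ and $\intav_{\lambda\Lambda B}G_{u,\beta,q,\lambda\Lambda B}^p<r^{-\beta p}\gamma^p$, one has $\bigl(\intav_{\lambda\Lambda B}G_{u,\beta,q,\lambda\Lambda B}^p\bigr)^{t/p}\le r^{-\beta(t-p)}\gamma^{t-p}\intav_{\lambda\Lambda B}G_{u,\beta,q,\lambda\Lambda B}^p$, and substituting collapses the surplus powers of $\gamma$ and $r$, leaving $\gamma^p r^{-\beta p}\mu(\mathcal F)\le C\tfrac{\mu(\Lambda B)}{\mu(\lambda\Lambda B)}\int_{\lambda\Lambda B}G_{u,\beta,q,\lambda\Lambda B}^p\le C\int_{\lambda\Lambda B}G_{u,\beta,q,\lambda\Lambda B}^p$.

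The main obstacle is this auxiliary estimate, and within it the coordination of the thresholds $\gamma/8,\gamma/4,\gamma/2$: the test function must be identically $1$ near $Z$ (hence wherever $\lvert u\rvert$ is small) while being supported where $\lvert u\rvert$ is small, whereas the Chebyshev/Poincar\'e step needs a fixed gap between $\lvert u\rvert$ on $\mathcal F$ and the average $\lvert u_{\Lambda B}\rvert$; one must choose the constants so these are compatible. Crucially, one must verify that $\gamma^p r^{-\beta p}\mu(\mathcal F)$ is controlled by $\int_{\lambda\Lambda B}G_{u,\beta,q,\lambda\Lambda B}^p$ using nothing beyond the Poincar\'e hypothesis — this is exactly the point at which a Poincar\'e inequality (rather than mere doubling of $\mu$) enters, and it is why the factor $\lambda\Lambda B$ appears on the right-hand side of~\eqref{e.foM}.
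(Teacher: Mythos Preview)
Your argument is correct: the test function $\varphi=\eta\,\Phi(u)$ is admissible, the Leibniz-type splitting together with Lemma~\ref{l.alpha} handles the cutoff, and your two-case auxiliary estimate for $\gamma^p r^{-\beta p}\mu(\mathcal F)$ via Chebyshev and the $(\beta,t,p,q)$-Poincar\'e inequality closes the loop. The paper does not supply its own proof of this lemma but cites \cite[Lemma~4.3]{MR4649157}; your approach is the standard Maz$'$ya capacitary argument and is essentially what that reference contains.
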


\section{Fractional capacity density condition}\label{s.fcdc}

 In this section, we first define the fractional capacity density condition, see Definition \ref{d.cap_density}. We also define the boundary Poincar\'e and pointwise Hardy inequalities in Definition \ref{d.bp_ph}.
 The counterparts
of Definitions \ref{d.cap_density} and \ref{d.bp_ph}
in the local case of upper gradients can be found in \cite{MR2854110}.
We prove three Lemmata \ref{l.bdry_poincare}, \ref{l.pointwise} and \ref{l.phtocd}, whose combination 
gives rise to a characterization of 
the fractional capacity density condition in terms of 
a boundary Poincar\'e  and a pointwise
Hardy inequality, we refer to  
Theorem \ref{t.eqv_f}. 

Motivated by  \cite{MR4649157} and \cite{LMV}, we define the fractional capacity density condition as follows.

\begin{definition}\label{d.cap_density}
Let $1\le p,q<\infty$, $0<\beta<1$, and let $E\subset X$ be a closed set.
We say that $E$ satisfies the fractional $(\beta,p,q)$-capacity density condition if there
are constants  $c_0>0$ and $\Lambda>2$ such that
\begin{equation}\label{e.fractionalk_capacity_density_condition}
\begin{split}
&\cp_{\beta,p,q}(E\cap \overline{B(x,r)},B(x,2r),B(x,\Lambda r))\ge 
 c_0 \, \cp_{\beta,p,q}(\overline{B(x,r)},B(x,2r),B(x,\Lambda r)) %
 \end{split}
\end{equation}
for all $x\in E$ and all $0<r<(1/8)\diam(E)$.
\end{definition}

In the light of Example \ref{e.counter}, we exclude the  choice $\Lambda=2$ in Definition \ref{d.cap_density}.
Boundary Poincar\'e inequalities and pointwise Hardy inequalities are defined as follows, as in \cite{MR4649157}.

\begin{definition}\label{d.bp_ph}
Let $1\le t,p,q<\infty$, $0<\beta<1$, and let $E\subset X$ be a closed set.
\begin{itemize}
\item[(i)] We say that  $E$ supports the boundary $(\beta,t,p,q)$-Poincar\'e  
inequality if there are constants $c_b>0$ and $\lambda\ge 1$ such that
\begin{equation}\label{eq.bdry_poinc_cap}
\begin{split}
&\left(\vint_{B} \lvert u(x)\rvert^t\,d\mu(x)\right)^{1/t}
\\&\qquad \le c_b R^{\beta}\left(\vint_{\lambda B} \biggl(\int_{\lambda B}\frac{|u(x)-u(y)|^q}{d(x,y)^{\beta q}\mu(B(x,d(x,y)))}\,d\mu(y)\biggr)^{p/q}\,d\mu(x)\right)^{1/p}\,,
\end{split}
\end{equation}
whenever $u\colon X\to \R$ is a continuous function 
such that %
$u=0$ on $E$ and $B=B(x_0,R)$ is a ball with
$x_0\in E$ and $0<R<\diam(E)/8$.
\item[(ii)] We say that  $E$ supports the 
pointwise $(\beta,p,q)$-Hardy inequality if
there exists a constant $c_H>0$  such that
\begin{equation}\label{e.pw_hardy}
\lvert u(x)\rvert \le c_H \dist(x,E)^{\beta} \bigl(M_{2\dist(x,E)}\bigl((G_{u,\beta,q,B(x,2\dist(x,E))})^p\bigr)(x)\bigr)^{1/p}\,,
\end{equation}
whenever $x\in X$ satisfies $0<\dist(x,E)<\diam(E)/8$ and  $u\colon X\to \R$ is a continuous function 
such that %
$u=0$ on $E$. Here $M_{2\dist(x,E)}$ is as in \eqref{e.max_funct_def_restricted} and $G_{u,\beta,q,B(x,2\dist(x,E))}$ is as in \eqref{e.gdef}.
\end{itemize}
\end{definition}

The following result is an adaptation of \cite[Theorem 4.7]{MR4649157}, 
where a dimension bound on $E$ is assumed. Here we assume instead that $E$ satisfies
the fractional capacity density condition.

\begin{lemma}\label{l.bdry_poincare}
Let $1\le p\le t<\infty$, $1\le q<\infty$ and $0<\beta<1$. 
Assume that $X$ is connected 
and supports a fractional $(\beta,t,p,q)$-Poincar\'e inequality,
with constants $c_P>0$ and $\lambda\ge 1$.
Let
$E\subset X$ be a closed set that satisfies the fractional $(\beta,p,q)$-capacity density condition,
with constants $c_0>0$ and $\Lambda>2$.
Then $E$ supports the boundary $(\beta,t,p,q)$-Poincar\'e  
inequality, with constants $c_b=C(\beta,q,p,c_\mu,c_P,c_0,\lambda,\Lambda)$ and $\lambda$.
\end{lemma}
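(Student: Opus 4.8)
The plan is to deduce the boundary Poincar\'e inequality directly from Maz$'$ya's capacitary Poincar\'e inequality, Lemma~\ref{t.Mazya}, combined with the fractional capacity density condition. Fix a continuous function $u$ with $u=0$ on $E$ and a ball $B=B(x_0,R)$ with $x_0\in E$ and $0<R<\diam(E)/8$. Let $Z=\{x\in X:u(x)=0\}$; since $u=0$ on $E$, we have $E\subset Z$, and in particular $E\cap\iol{B(x_0,R)}\subset \iol{B(x_0,R)}\cap Z$. The first step is to apply Lemma~\ref{t.Mazya} with the ball $B(x_0,R)$ and the dilation parameter $\Lambda>2$ coming from the capacity density condition, to obtain
\[
\biggl(\vint_{B}\lvert u(x)\rvert^{t}\,d\mu(x)\biggr)^{p/t}\le \biggl(\vint_{\Lambda B}\lvert u(x)\rvert^{t}\,d\mu(x)\biggr)^{p/t}\le \frac{C}{\cp_{\beta,p,q}(\iol{B(x_0,R)}\cap Z,2B,\Lambda B)}\int_{\lambda\Lambda B}G_{u,\beta,q,\lambda\Lambda B}(x)^{p}\,d\mu(x)\,,
\]
where I have used $B\subset \Lambda B$ and $\mu(B)\le\mu(\Lambda B)$ together with monotonicity of the capacity in the competing set $Z\supset E\cap\iol{B(x_0,R)}$ to pass to a lower bound involving the smaller set $E\cap\iol{B(x_0,R)}$.

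The second step is to invoke the fractional $(\beta,p,q)$-capacity density condition \eqref{e.fractionalk_capacity_density_condition} for the point $x_0\in E$ and radius $R<(1/8)\diam(E)$, which gives
\[
\cp_{\beta,p,q}(E\cap\iol{B(x_0,R)},2B,\Lambda B)\ge c_0\,\cp_{\beta,p,q}(\iol{B(x_0,R)},2B,\Lambda B)\,,
\]
and the third step is to bound the capacity of the solid ball from below by Lemma~\ref{l.frac_cap_balls}(ii)\,---\,valid since $X$ is connected, $\Lambda>2$, and $R<(1/8)\diam(X)$ (note $\diam(E)\le\diam(X)$)\,---\,obtaining $\cp_{\beta,p,q}(\iol{B(x_0,R)},2B,\Lambda B)\ge C^{-1}R^{-\beta p}\mu(B)$. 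Chaining these, the denominator in the Maz$'$ya estimate is at least $c\,R^{-\beta p}\mu(B)$, so after multiplying through and taking $p$-th roots we arrive at
\[
\biggl(\vint_{B}\lvert u(x)\rvert^{t}\,d\mu(x)\biggr)^{1/t}\le C R^{\beta}\mu(B)^{-1/p}\biggl(\int_{\lambda\Lambda B}G_{u,\beta,q,\lambda\Lambda B}(x)^{p}\,d\mu(x)\biggr)^{1/p}=C R^{\beta}\biggl(\vint_{\lambda\Lambda B}G_{u,\beta,q,\lambda\Lambda B}(x)^{p}\,d\mu(x)\biggr)^{1/p}\,,
\]
using $\mu(\lambda\Lambda B)\le C\mu(B)$ by doubling. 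This is precisely \eqref{eq.bdry_poinc_cap} with dilation constant $\lambda\Lambda$ in place of $\lambda$, and with $c_b$ depending on $\beta,q,p,c_\mu,c_P,c_0,\lambda,\Lambda$ as claimed.

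The only genuinely delicate points are bookkeeping ones rather than conceptual obstacles: one must make sure the hypotheses of Lemma~\ref{l.frac_cap_balls}(ii) are met, which requires $R<(1/8)\diam(X)$ and this follows from $R<\diam(E)/8\le\diam(X)/8$; and one must check that Lemma~\ref{t.Mazya} is applicable, which needs the $(\beta,t,p,q)$-Poincar\'e inequality assumed in the statement and $p\le t$. A small subtlety is that $E$ could in principle be unbounded so that $\diam(E)=\infty$; but the inequality is only asserted for balls of radius $R<\diam(E)/8$, and for every such finite $R$ the argument above goes through verbatim. One should also note that the capacity $\cp_{\beta,p,q}(E\cap\iol{B(x_0,R)},2B,\Lambda B)$ is positive\,---\,hence the division is legitimate\,---\,precisely because of the capacity density lower bound combined with Lemma~\ref{l.frac_cap_balls}(ii). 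Thus the proof is a direct concatenation of Lemma~\ref{t.Mazya}, Definition~\ref{d.cap_density}, Lemma~\ref{l.frac_cap_balls}(ii), and the doubling property, with no step presenting a real difficulty.
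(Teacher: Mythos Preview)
Your approach is essentially the same as the paper's: combine Maz$'$ya's capacitary Poincar\'e inequality (Lemma~\ref{t.Mazya}), the capacity density condition, and the lower bound for the capacity of a ball (Lemma~\ref{l.frac_cap_balls}(ii)). There is, however, a genuine discrepancy between what you prove and what the lemma asserts. You apply Lemma~\ref{t.Mazya} directly to the ball $B(x_0,R)$ with parameter $\Lambda$, which produces the boundary Poincar\'e inequality with dilation constant $\lambda\Lambda$, as you yourself note. But the lemma claims the inequality with dilation constant $\lambda$ (the \emph{same} $\lambda$ coming from the Poincar\'e inequality). The paper achieves this by a rescaling: it sets $r=R/\Lambda$, applies the capacity density condition and Lemma~\ref{l.frac_cap_balls}(ii) at the radius $r$ (still $<\diam(E)/8$), and then applies Lemma~\ref{t.Mazya} to $B(x_0,r)$ so that the left-hand average is over $\Lambda B(x_0,r)=B(x_0,R)$ and the right-hand integral is over $\lambda\Lambda B(x_0,r)=B(x_0,\lambda R)$. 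That rescaling trick is what recovers the sharper dilation constant $\lambda$ in the statement.

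A smaller slip: the inequality $\bigl(\vint_{B}\lvert u\rvert^{t}\bigr)^{p/t}\le \bigl(\vint_{\Lambda B}\lvert u\rvert^{t}\bigr)^{p/t}$ does \emph{not} follow from $B\subset\Lambda B$ and $\mu(B)\le\mu(\Lambda B)$; those facts give the inequality in the wrong direction for averages. What you actually need here is doubling, $\mu(\Lambda B)\le C\mu(B)$, which yields the inequality with an extra harmless constant depending on $c_\mu$ and $\Lambda$.
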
 

\begin{proof}
Let $u\colon X\to \R$ be a continuous function 
such that %
$u=0$ on $E$, and let $B(x_0,R)$ be a ball with
$x_0\in E$ and $0<R<\diam(E)/8$.
We prove the claim \eqref{eq.bdry_poinc_cap} the ball $B(x_0,R)$, but for convenience
we write during the proof that $B=B(x_0,r)=B(x_0,R/\Lambda)$. 
Observe that $r=R/\Lambda<R<\diam(E)/8$.
Hence, by the assumed  capacity density condition
and part (ii) of Lemma \ref{l.frac_cap_balls},
\begin{equation}\label{e.confusing}
\begin{split}
&\cp_{\beta,p,q}(E\cap \overline{B},2B,\Lambda B)\ge 
 c_0 \: \cp_{\beta,p,q}(\overline{B},2 B,\Lambda B)
\ge  C(\beta,p,q, c_\mu,c_0,\Lambda)r^{-\beta p}\mu(B)\,.
\end{split}
 \end{equation}
Write $Z=\{x\in X :  u(x)=0\}\supset E$. By monotonicity of the capacity, inequality \eqref{e.confusing} and the doubling condition of the measure $\mu$, we have
\[
\frac{1}{\cp_{\beta,p,q}(\iol{B}\cap Z,2B,\Lambda B)}
\le \frac{1}{\cp_{\beta,p,q}(\iol{B}\cap E,2B,\Lambda B)}
\le \frac{C r^{\beta p}}{\mu(B)}\le \frac{C R^{\beta p}}{\mu(\lambda \Lambda B)}.
\]
The desired inequality \eqref{eq.bdry_poinc_cap}, for  the ball $B(x_0,R)=B(x_0,\Lambda r)$, follows from Lemma~\ref{t.Mazya}.
\end{proof}

Next we show that if $E$ supports
the boundary Poincar\'e inequality, then $E$ supports  a pointwise Hardy inequality.  The proof is inspired by \cite[Theorem 5.1]{MR4649157},
where a dimension bound on $E$ is assumed instead of the boundary Poincar\'e inequality.

\begin{lemma}\label{l.pointwise}
Let $1\le t,p,q<\infty$ and $0<\beta<1$. 
Assume that a closed set
$E\subset X$ supports the boundary $(\beta,t,p,q)$-Poincar\'e inequality \eqref{eq.bdry_poinc_cap},
with constants $c_b>0$ and $\lambda\ge 1$.
Then $E$ supports the pointwise $(\beta,p,q)$-Hardy  
inequality \eqref{e.pw_hardy}, with $c_H=C(\beta,q,c_\mu,c_b,\lambda)$.
\end{lemma}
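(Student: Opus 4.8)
The plan is to localize the boundary Poincaré inequality to a dyadic chain of balls shrinking to the point $x\in X\setminus E$ and then sum up the telescoping differences, a standard scheme for deriving pointwise Hardy inequalities from Poincaré-type estimates. Write $d=\dist(x,E)$, fix $x_0\in E$ with $d(x,x_0)=d$, and consider the balls $B_j=B(x_0,2^{-j}\cdot 4d)$ for $j\ge 0$; the largest one contains $x$ and satisfies $B_0=B(x_0,4d)\subset B(x,6d)$, while every $B_j$ is centered on $E$ with radius $<\diam(E)/8$ (after a harmless adjustment of constants, using that $d<\diam(E)/8$ and $E$ unbounded, hence $\diam(E)=\infty$, so the radius restriction is vacuous). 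Since $B_j$ is centered at $x_0\in E$, the boundary $(\beta,t,p,q)$-Poincaré inequality \eqref{eq.bdry_poinc_cap} gives control of $\bigl(\vint_{B_j}|u|^t\bigr)^{1/t}$, and in particular of $|u_{B_j}|$, by $C(2^{-j}d)^\beta\bigl(\vint_{\lambda B_j}G_{u,\beta,q,\lambda B_j}^p\bigr)^{1/p}$.

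First I would use the Lebesgue differentiation theorem (valid since $\mu$ is doubling) together with continuity of $u$ to write $u(x)=\lim_{j\to\infty}u_{B(x,2^{-j}d)}$; comparing these averages over $B(x,2^{-j}d)$ with averages over $B_j$ via the doubling property (the two balls are comparable in radius and nested up to a fixed dilation), one gets $|u(x)|\le C\sum_{j\ge 0}|u_{B_{j+1}}-u_{B_j}|$ plus an error controlled by $|u_{B_0}|$ — and since $x_0\in E$ and $u=0$ on $E$, the average $u_{B_0}$ is itself estimated directly by \eqref{eq.bdry_poinc_cap}. Each difference $|u_{B_{j+1}}-u_{B_j}|$ is bounded, by the doubling of $\mu$, by $C\vint_{B_{j+1}}|u-u_{B_j}|\le C\vint_{B_j}|u-u_{B_j}|\le C\bigl(\vint_{B_j}|u-u_{B_j}|^t\bigr)^{1/t}$, and then by the boundary Poincaré inequality this is at most $C(2^{-j}d)^\beta\bigl(\vint_{\lambda B_j}G_{u,\beta,q,\lambda B_j}^p\,d\mu\bigr)^{1/p}$. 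Summing, $|u(x)|\le C\,d^\beta\sum_{j\ge 0}2^{-j\beta}\bigl(\vint_{\lambda B_j}G_{u,\beta,q,\lambda B_j}^p\,d\mu\bigr)^{1/p}$.

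To finish, I would bound each factor $\bigl(\vint_{\lambda B_j}G_{u,\beta,q,\lambda B_j}^p\,d\mu\bigr)^{1/p}$ by the restricted maximal function: since $\lambda B_j\subset B(x,r_j)$ for $r_j\simeq 2^{-j}d\le 2d$ (enlarging $\lambda$ into the radius using that both balls are centered within distance $d$ of each other and $2^{-j}d\le d$), and since $G_{u,\beta,q,\lambda B_j}\le G_{u,\beta,q,B(x,2d)}$ by the monotonicity \eqref{e.monotonicity}, one has $\vint_{\lambda B_j}G_{u,\beta,q,\lambda B_j}^p\,d\mu\le C\vint_{B(x,r_j)}G_{u,\beta,q,B(x,2d)}^p\,d\mu\le C\,M_{2d}\bigl(G_{u,\beta,q,B(x,2d)}^p\bigr)(x)$. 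Then the geometric series $\sum_j 2^{-j\beta}$ converges (here $\beta>0$ is all that is needed), giving
\[
|u(x)|\le C\,d^\beta\bigl(M_{2d}\bigl(G_{u,\beta,q,B(x,2d)}^p\bigr)(x)\bigr)^{1/p},
\]
which is exactly \eqref{e.pw_hardy}. The main technical nuisance — not a deep obstacle — is bookkeeping the various fixed dilation constants so that at every stage the balls appearing are genuinely centered on $E$ with admissible radii and are contained in $B(x,2\dist(x,E))$, so that the single restricted maximal function $M_{2\dist(x,E)}$ controls all the averages simultaneously; one also has to take a little care that the exponent $t\ge 1$ in the Poincaré inequality is only used through Jensen's inequality $\vint|f|\le(\vint|f|^t)^{1/t}$, so no lower bound on $t$ beyond $t\ge 1$ is needed.
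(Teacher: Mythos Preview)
There is a genuine gap. Your dyadic chain $B_j=B(x_0,2^{-j}\cdot 4d)$ is centered at $x_0\in E$, so it shrinks to $x_0$, not to $x$. Consequently the claim that $B(x,2^{-j}d)$ and $B_j$ are ``nested up to a fixed dilation'' fails as soon as $2^{-j}d$ is small compared with $d=d(x,x_0)$: for $j\ge 2$ the ball $B(x,2^{-j}d)$ lies at distance $d$ from $x_0$ while $B_j$ has radius $4\cdot 2^{-j}d\to 0$, so no fixed dilate of $B_j$ contains it. The telescoping sum $\sum_{j\ge 0}(u_{B_{j+1}}-u_{B_j})$ therefore converges to $u(x_0)-u_{B_0}=-u_{B_0}$, not to $u(x)-u_{B_0}$, and you have bounded $\lvert u_{B_0}\rvert$ rather than $\lvert u(x)\rvert$. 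The same obstruction breaks the maximal-function step: your inclusion $\lambda B_j\subset B(x,r_j)$ with $r_j\simeq 2^{-j}d$ would force $r_j\ge d(x,x_0)+4\lambda\cdot 2^{-j}d\ge d$ for every $j$, so $r_j$ cannot decay. (Two smaller issues: a nearest point $x_0$ need not exist, and you invoke ``$E$ unbounded'' to dismiss the radius restriction $4d<\diam(E)/8$, but unboundedness is not assumed.)

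The paper resolves this by running the telescoping chain with balls $B_j=B(x,2^{-j}r)$ centered at $x$ (so Lebesgue differentiation recovers $u(x)$), and applying to each $B_j$ the \emph{general} $(\beta,1,p,q)$-Poincar\'e inequality of Lemma~\ref{l.qp}, which holds for arbitrary balls with no requirement that the center lie on $E$. The boundary Poincar\'e hypothesis is used exactly once, for a single small ball $\widetilde B=B(w,r/(4\lambda))$ with $w\in E$ and $d(x,w)<\tfrac32\dist(x,E)$, chosen so that $\lambda\widetilde B\subset B(x,2\dist(x,E))$; this controls the term $\lvert u_{\widetilde B}\rvert$ in the splitting $\lvert u(x)\rvert\le \lvert u(x)-u_B\rvert+\lvert u_B-u_{\widetilde B}\rvert+\lvert u_{\widetilde B}\rvert$. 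The fix is thus not mere bookkeeping: one needs two different Poincar\'e inequalities, Lemma~\ref{l.qp} for the chain shrinking to $x$, and the boundary hypothesis once to anchor the estimate to $E$.
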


\begin{proof}
Let $u\colon X\to \R$ be a continuous function 
such that %
$u=0$ on $E$.
By replacing $u$ with functions $u_k=\min\{k,\lvert u\rvert\}$, where $k\in\N$,  
and approximating,
we may assume that $u$ is integrable on balls.

Fix $x\in X$ with $0<\dist(x,E)<\diam(E)/8$
and let $B=B(x,2\dist(x,E))$.
Write $r=2\dist(x,E)>0$ and fix $w\in E$ such that $d(x,w)<(3/2)\dist(x,E)$. Then
\[
\widetilde B=B(w,r/(4\lambda))\subset B,
\]
where $\lambda\ge 1$ is the constant in the boundary $(\beta,t,p,q)$-Poincar\'e inequality \eqref{eq.bdry_poinc_cap}, 
and 
\begin{equation}\label{eq.to_three_cap}
\begin{split}
\lvert u(x)\rvert = \lvert u(x)-u_B+u_B - u_{\widetilde B}+u_{\widetilde B}\rvert%
\le \lvert u(x)-u_B\rvert+\lvert u_B-u_{\widetilde B}\rvert + \lvert u_{\widetilde B}\rvert.
\end{split} 
\end{equation}
We estimate each of the terms on the right-hand side separately.

First observe  that $\lambda \widetilde B\subset B$
and that the measures of these two balls are comparable, 
with constants only depending on $c_\mu$. %
Hence, by applying inequality \eqref{eq.bdry_poinc_cap} for the ball $\widetilde B$,
whose center is $w\in E$ and radius is $r/(4\lambda)<\diam(E)/8$, we obtain 
\begin{align*}
\lvert u_{\widetilde B}\rvert&\le
\vint_{\widetilde B} \lvert u(y)\rvert\,d\mu(y) 
\le \left(\vint_{\widetilde B} \lvert u(y)\rvert^t\,d\mu(y) \right)^{1/t}
\le c_b \,r^{\beta}\biggl(\vint_{\lambda \widetilde B} \mathbf{1}_{B}(y)
 G_{u,\beta,q,\lambda \widetilde B}(y)^p\,d\mu(y)\biggr)^{1/p}\\
&\le C(c_b,c_\mu) \,r^{\beta}\biggl(\vint_{B} G_{u,\beta,q,B}(y)^p\,  d\mu(y)\biggr)^{1/p}\\&\le C(c_b,c_\mu,\beta) \dist(x,E)^{\beta} \bigl(M_{2\dist(x,E)}\bigl((G_{u,\beta,q,B})^p\bigr)(x)\bigr)^{1/p}. 
\end{align*}
Recall from Lemma \ref{l.qp} that $X$ supports a $(\beta,1,p,q)$-Poincar\'e inequality, with constants $\lambda=1$ and $C(\beta,q,c_\mu)$.
By the doubling condition, followed by the $(\beta,1,p,q)$-Poincar\'e inequality, we obtain 
\begin{align*}
\lvert u_B-u_{\widetilde B}\rvert&\le C(c_\mu,\lambda) \vint_{B} \lvert u(y)-u_B\rvert\,d\mu(y)\\&
\le  C(\beta,q,c_\mu,\lambda)r^{\beta}\biggl(\vint_{B}  G_{u,\beta,q,B}(y)^p \,d\mu(y)\biggr)^{1/p}
\\&\le C(\beta,q,c_\mu,\lambda)\dist(x,E)^{\beta}  \bigl(M_{2\dist(x,E)}\bigl((G_{u,\beta,q,B})^p\bigr)(x)\bigr)^{1/p}. 
\end{align*}

 In order to estimate the term $\lvert u(x)-u_B\rvert$, we write
$B_j=2^{-j}B=B(x,2^{-j}r)$ for $j=0,1,2,\ldots$.
Since   $\lim_{j\to \infty} u_{B_j}= u(x)$, we find that
\begin{align*}
\lvert u(x)-u_B\rvert&\le \sum_{j=0}^\infty \lvert u_{B_{j+1}}-u_{B_j}\rvert%
 \le c_\mu\sum_{j=0}^\infty  \vint_{B_j} \lvert u(y)-u_{B_j}\rvert\,d\mu(y)\\
&\le C(\beta,q,c_\mu) \sum_{j=0}^\infty (2^{-j}r)^{\beta}\biggl(\vint_{{B_j}}
G_{u,\beta,q,B_j}(y)^p\,d\mu(y)\biggr)^{1/p}\\
&\le C(\beta,q,c_\mu) r^{\beta}\sum_{j=0}^\infty 2^{-j\beta}\biggl(\vint_{{B_j}}
G_{u,\beta,q,B}(y)^p \,d\mu(y)\biggr)^{1/p}\\
&\le C(\beta,q,c_\mu) r^{\beta}\sum_{j=0}^\infty 2^{-j\beta} \bigl(M_{2\dist(x,E)}\bigl((G_{u,\beta,q,B})^p\bigr)(x)\bigr)^{1/p}\\
&=C(\beta,q,c_\mu) \dist(x,E)^{\beta}\bigl(M_{2\dist(x,E)}\bigl((G_{u,\beta,q,B})^p\bigr)(x)\bigr)^{1/p}.
\end{align*}
The claim follows from~\eqref{eq.to_three_cap} and the estimates above. 
\end{proof}

The following lemma  closes the circuit by showing that the pointwise Hardy inequality
implies the fractional capacity density condition.
The proof given below is adapted from the proof of  \cite[Theorem 6.23]{KLV2021} which, in turn, is based on the proof of \cite[Lemma 2]{MR2854110}.

\begin{lemma}\label{l.phtocd}
Let $1\le p,q<\infty$, $0<\beta<1$ and 
$\Lambda> 2$.
Assume that $X$ is connected. Let 
$E\subset X$ be a closed set that supports the pointwise $(\beta,p,q)$-Hardy  
inequality~\eqref{e.pw_hardy}, with a constant $c_H>0$. Then $E$  satisfies the fractional $(\beta,p,q)$-capacity density condition,
with constants $c_0=C(\beta,p,q,c_\mu,c_H,\Lambda)>0$ and $\Lambda$.
\end{lemma}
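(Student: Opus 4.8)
The plan is to prove the contrapositive-style estimate directly: fix $x\in E$ and a radius $0<r<(1/8)\diam(E)$, let $B=B(x,r)$, and estimate the capacity $\cp_{\beta,p,q}(\ol B,2B,\Lambda B)$ from below by the capacity $\cp_{\beta,p,q}(E\cap\ol B,2B,\Lambda B)$ up to a constant. The natural route, following \cite[Lemma~2]{MR2854110} and \cite[Theorem~6.23]{KLV2021}, is to take an arbitrary admissible test function $\varphi$ for $\cp_{\beta,p,q}(E\cap\ol B,2B,\Lambda B)$, so $\varphi$ is continuous, $\varphi\ge 1$ on $E\cap\ol B$ and $\varphi=0$ outside $2B$, and then \emph{build} from $1-\varphi$ (or a truncation of it) a test function for the capacity of the ball $\ol B$. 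Concretely, I would consider $w=1-\varphi$, which is continuous, vanishes on $E\cap\ol B$, and equals $1$ outside $2B$; the point is that $w$ is (up to normalization and cutoff) admissible as a competitor in a pointwise Hardy estimate centered at points of $\ol B$, because $w=0$ on a large piece of $E$.

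The key steps, in order: (1) reduce to showing $r^{-\beta p}\mu(B)\le C\,\cp_{\beta,p,q}(E\cap\ol B,2B,\Lambda B)$, since by Lemma~\ref{l.frac_cap_balls}(i) the capacity of $\ol B$ is bounded above by $C r^{-\beta p}\mu(B)$ (here connectedness of $X$ is used so that reverse doubling is available, and also to invoke part (ii) if needed). (2) Given an admissible $\varphi$ for $\cp_{\beta,p,q}(E\cap\ol B,2B,\Lambda B)$, truncate so $0\le\varphi\le1$; set $v=(1-\varphi)$ multiplied by a Lipschitz cutoff $\psi$ that is $1$ on $2B$ and supported in $\Lambda B$, so that $u:=v$ is continuous, vanishes on $E\cap\ol B$, and one can control $G_{u,\beta,q,B(y,\cdot)}$ in terms of $G_{\varphi,\beta,q,\cdot}$ plus an error from the cutoff $\psi$, the latter handled by Lemma~\ref{l.alpha}. (3) Apply the pointwise $(\beta,p,q)$-Hardy inequality \eqref{e.pw_hardy} at a suitable point $y$ in $B$ with $\dist(y,E)$ comparable to $r$ — for instance a point where $\dist(y,E)$ is of order $r$, which exists because $r<\diam(E)/8$ and $E$ is unbounded/has points far away, so $X\setminus E$ meets $B$ appropriately; this gives $1=|u(y)|\le c_H\dist(y,E)^\beta(M_{cr}((G_{u,\beta,q,B(y,2\dist(y,E))})^p)(y))^{1/p}$ since $u(y)=1-\varphi(y)$ and $\varphi(y)=0$ when $y$ is far enough from $E\cap \ol B$ inside $2B$. (4) Unwind the restricted maximal function: the averaging ball has radius $\le cr$ and is contained in $\Lambda B$ for $\Lambda$ large enough, so $\vint_{B(y,s)}(G_{u,\beta,q,B(y,2\dist(y,E))})^p\,d\mu\le C\mu(B)^{-1}\int_{\Lambda B}(G_{\varphi,\beta,q,\Lambda B})^p\,d\mu + \text{(cutoff error)}$, and rearranging yields $r^{-\beta p}\mu(B)\le C\int_{\Lambda B}(G_{\varphi,\beta,q,\Lambda B})^p\,d\mu$. (5) Take the infimum over admissible $\varphi$ to conclude.

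The main obstacle I anticipate is a bookkeeping issue rather than a conceptual one: carefully choosing the evaluation point $y\in B$ and the geometric scales so that simultaneously (a) $\dist(y,E)\sim r$, (b) $\varphi(y)=0$ so that $u(y)=1$, (c) the restricted maximal operator's averaging balls $B(y,s)$ with $s\le 2\dist(y,E)$ stay inside $\Lambda B$, and (d) the inner integration radius $2\dist(y,E)$ in $G_{u,\beta,q,B(y,2\dist(y,E))}$ is controlled by $\Lambda r$ so the monotonicity \eqref{e.monotonicity} lets us replace it by $G_{\varphi,\beta,q,\Lambda B}$. Condition (b) is the delicate one: $\varphi$ need only vanish outside $2B$, so I must take $y$ with $d(x,y)\in(2r,Cr)$, which forces $\Lambda$ to be taken sufficiently large (consistent with $\Lambda>2$ being a free parameter in Definition~\ref{d.cap_density}) and requires knowing $X\setminus 2B$ meets $B(x,Cr)\setminus E$ — guaranteed since $\dist(\cdot,E)$ attains values up to order $r$ on $B(x,Cr)$ because $r<\diam(E)/8$ and by connectedness of $X$. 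The cutoff-error term from step (2), where $\psi$ is not constant, should be absorbed using Lemma~\ref{l.alpha} exactly as in the proof of Lemma~\ref{l.frac_cap_balls}(i) and will only cost an extra additive $Cr^{-\beta p}\mu(B)$-type term that is harmless after the rearrangement; if it turns out to interfere, one can instead use that $G_{u,\beta,q,B(y,\cdot)}\le G_{1-\varphi,\beta,q,X}+$ a term supported near $\partial(\Lambda B)$ which does not see $B(y,2\dist(y,E))\subset \Lambda B$ when $\Lambda$ is large.
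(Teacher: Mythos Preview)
Your plan has the right architecture (reduce to $r^{-\beta p}\mu(B)\le C\int_{\Lambda B}G_{\varphi,\beta,q,\Lambda B}^p\,d\mu$, build $u$ from $1-\varphi$ and a cutoff, then invoke \eqref{e.pw_hardy}), but the single-point evaluation in steps (3)--(4) does not close. The pointwise Hardy inequality at $y$ gives $1\lesssim r^{\beta}\bigl(\sup_{0<s\le 2\dist(y,E)}\vint_{B(y,s)}G^p\bigr)^{1/p}$, so there is \emph{some} $0<s\le Cr$ with $\vint_{B(y,s)}G^p\gtrsim r^{-\beta p}$. Your step (4) asserts $\vint_{B(y,s)}G^p\le C\mu(B)^{-1}\int_{\Lambda B}G^p$, but that would require $\mu(B(y,s))\gtrsim\mu(B)$, i.e.\ $s\gtrsim r$, and nothing prevents $s$ from being arbitrarily small. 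A single point therefore gives no control on $\mu(B)$.

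There is a second, related obstruction in your cutoff placement. For the pointwise Hardy inequality \eqref{e.pw_hardy} you need $u=0$ on \emph{all} of $E$, not just on $E\cap\ol B$. Since $\varphi$ is only forced to equal $1$ on $E\cap\ol B$, the function $\psi(1-\varphi)$ vanishes on $E$ only if $\psi$ is supported where $E\subset\{\varphi=1\}$, i.e.\ inside $\ol B$; but then $u\equiv 0$ outside $\ol B$, and your evaluation point $y$ with $d(x,y)>2r$ gives $u(y)=0$. Conversely, if you enlarge the support of $\psi$ to reach such a $y$, then $u$ need not vanish on $E\cap(\Lambda B\setminus\ol B)$.

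The paper's proof repairs both issues at once. It takes the cutoff $\psi$ supported in $B(x_0,R)$ and equal to $1$ on $\overline{B(x_0,R/2)}$, so that $u=\psi(1-v)$ genuinely vanishes on $E$, and then --- crucially --- it does \emph{not} look for one good point. Instead it works with the set $F=\{z\in B(x_0,\gamma R):v(z)<\tfrac12\}$ (with $\gamma=\tfrac16$), shows $\mu(F)\gtrsim\mu(B(x_0,R))$ in the nontrivial case, and applies \eqref{e.pw_hardy} at every $z\in F$ to pick a radius $r_z\le 2\dist(z,E)$ with $(\tfrac12)^p\le |u(z)|^p\lesssim R^{\beta p}\vint_{B(z,r_z)}G_{u,\beta,q,B_z}^p$. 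A $5r$-covering argument then produces disjoint balls $B(z_i,r_i)$, and the point is that the \emph{sum} $\sum_i\mu(B(z_i,r_i))$ dominates $\mu(F)\gtrsim\mu(B(x_0,R))$ regardless of how small the individual $r_i$ are. The choice $\gamma=\tfrac16$ also guarantees $B(z_i,r_i)\cup B_{z_i}\subset\overline{B(x_0,R/2)}$, so on these balls $u=1-v$ exactly and $G_{u,\beta,q,B_{z_i}}=G_{v,\beta,q,B_{z_i}}$, eliminating any cutoff error. This covering step is the idea your outline is missing.
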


\begin{proof}
Let $x_0\in E$ and $0<R<(1/8)\diam(E)$.
Part (i) of Lemma \ref{l.frac_cap_balls}  implies that
\[
\cp_{\beta,p,q}(\overline{B(x_0,R)},B(x_0,2R),B(x_0,\Lambda R))\le 
C(c_\mu,\beta,p,q,\Lambda)R^{-\beta p}\mu(B(x_0,R))
\]
By Definitions \ref{d.cap_density} and \ref{d.frac_capacity}, it suffices to find a constant  $C=C(\beta,p,q,c_\mu,c_H,\Lambda)>0$ such that
\begin{equation}\label{e.suffhere}
R^{-\beta p}\mu(B(x_0,R))\le C\int_{B(x_0,\Lambda R)} \biggl( \int_{B(x_0,\Lambda R)}\frac{|v(x)-v(y)|^q}{d(x,y)^{\beta q}\mu(B(x,d(x,y)))}\,d\mu(y)\biggr)^{p/q}\,d\mu(x)
\end{equation}
for all continuous functions
$v\colon X\to \R$ such that $v(x)\ge 1$ for every $x\in E\cap\overline{B(x_0,R)}$ and 
$v(x)=0$ for every $x\in X\setminus B(x_0,2R)$.
Moreover, by replacing $v$ with $\max\{0,\min\{v,1\}\}$ if necessary, we may assume that $0\leq v\leq 1$ and that $v$ is integrable on balls.

Let $Q>0$ and $c_Q>0$ be the constants in the quantitative
doubling condition \eqref{e.doubling_quant},
both depending on $c_\mu$ only.
Let $\gamma=\frac16$. First assume that
\[
\vint_{B(x_0,R)} v(x)\,d\mu(x)> \frac{c_Q\gamma^Q}4.
\] 
Let $\lambda=\min\{3,\Lambda\}>2$.
 Since $X$ is connected,  there exists a constant $0<c_R=c(c_\mu,\Lambda)<1$ such that 
inequality~\eqref{e.rev_dbl_decay} holds with $\kappa=2/\lambda$.
In particular, since $\lambda R<4R<\diam(X)/2$, we have
 \[
\mu(B(x_0,2R))\le c_R\,\mu(B(x_0,\lambda R))\le c_R\,\mu(B(x_0,\Lambda R))\,.
\]
Hence, we see that
\begin{equation}\label{e.mest}
\mu(B(x_0,\Lambda R)\setminus B(x_0,2R))=\mu(B(x_0,\Lambda R))-\mu(B(x_0,2R))\ge
(1-c_R)\mu(B(x_0,\Lambda R)).
\end{equation}
Observe that  $v_{B(x_0,\Lambda R)\setminus B(x_0,2R)}=0$.
By inequality \eqref{e.mest} and Lemma \ref{l.qp},
\begin{align*}
1 &< \frac{4}{c_Q\gamma^{Q}} \vint_{B(x_0,R)} v(x)\,d\mu(x) 
\le C(c_\mu,\Lambda) \vint_{B(x_0,\Lambda R)} \lvert v(x)-v_{B(x_0,\Lambda R)\setminus B(x_0,2R)}\rvert\,d\mu(x)\\
&\le C(c_\mu,\Lambda)\vint_{B(x_0,\Lambda R)} \lvert v(x)-v_{B(x_0,\Lambda R)}\rvert\,d\mu(x)+C(c_\mu,\Lambda)\lvert v_{B(x_0,\Lambda R)}-v_{B(x_0,\Lambda R)\setminus B(x_0,2R)}\rvert\\
&\le C(c_\mu,\Lambda)\vint_{B(x_0,\Lambda R)} \lvert v(x)-v_{B(x_0,\Lambda R)}\rvert\,d\mu(x)\\
&\le C(\beta,q,c_\mu,\Lambda) R^\beta \biggl(\vint_{B(x_0,\Lambda R)} G_{v,\beta,q,B(x_0,\Lambda R)}(x)^p\,d\mu(x)\biggr)^{\frac1p},
\end{align*}
from which inequality~\eqref{e.suffhere} easily follows.

Next we assume that 
\[
\vint_{B(x_0,R)} v(x)\,d\mu(x)\le\frac{c_Q\gamma^Q}4.
\] 
Let $F=\{x\in  B(x_0,\gamma R) : v(x) < \frac12 \}$.  
Since $v=1$ in $E\cap\overline{B(x_0,R)}$, we have $F\subset X\setminus E$.
By definition $v\ge\frac12$ in $B(x_0,\gamma R)\setminus F$,
and thus
\[
\mu(B(x_0,\gamma R)\setminus F) 
\le 2\int_{B(x_0,\gamma R)\setminus F} v(x)\,d\mu(x) 
\le 2\int_{B(x_0,R)} v(x)\,d\mu(x)
\le \frac{c_Q\gamma^Q} 2 \mu(B(x_0,R)).
\]
This together with the quantitative doubling condition \eqref{e.doubling_quant} implies
\begin{equation}\label{e.mitta_F}
\begin{split}
\mu(F)
&=\mu(B(x_0,\gamma R))-\mu(B(x_0,\gamma R)\setminus F)\\
&\ge c_Q\gamma^Q \mu(B(x_0,R))-\frac{c_Q\gamma^Q} 2  \mu(B(x_0,R))
=\frac{c_Q\gamma^Q} 2 \mu(B(x_0,R)).
\end{split}
\end{equation}
Let 
\[
\psi(x)=\max\Bigl\{0,1-\tfrac 2 R\dist\bigl(x,B(x_0,\tfrac R2)\bigr)\Bigr\}
\] 
for every $x\in X$. 
Then $\psi$ is a Lipschitz function in $X$ such that $\psi=0$ in $X\setminus B(x_0,R)$ and $\psi=1$ in $\overline{ B(x_0,R/2)}$. 
Define
\[
u(x)=\psi(x)(1-v(x))
\] 
for every $x\in X$. 
Then $u\colon X\to \R$ is continuous, $u=0$ in $E$ and $u=1-v$ in 
$\overline{B(x_0,R/2)}$.
In particular
\begin{equation}\label{e.samat}
\begin{split}
G_{u,\beta,q,A}(x)&=\biggl(\int_{A} \frac{\vert u(x)-u(y)\vert^q}{d(x,y)^{\beta q}\mu(B(x,d(x,y)))}\,d\mu(y)\biggr)^{1/q}\\
&=\biggl(\int_{A} \frac{\vert (1-v(x))-(1-v(y))\vert^q}{d(x,y)^{\beta q}\mu(B(x,d(x,y)))}\,d\mu(y)\biggr)^{1/q}=G_{v,\beta,q,A}(x)
\end{split}
\end{equation}
whenever $A\subset \overline{B(x_0,R/2)}$ is a Borel set and 
$x\in \overline{B(x_0,R/2)}$.

If $z\in F$, then $d(z,E)>0$ and we denote $B_z=B(z,2d(z,E))$.
Let $z\in F$ and observe that
$0<d(z,E)<\gamma R<R<\diam(E)/8$.
The pointwise Hardy inequality~\eqref{e.pw_hardy} implies
\[
\lvert u(z)\rvert^p\le c_H^p\dist(z,E)^{\beta p}  M_{2d(z,E)}\bigl((G_{u,\beta,q,B_{z}})^p\bigr)(z)\le  c_H^p R^{\beta p}M_{2d(z,E)}\bigl((G_{u,\beta,q,B_{z}})^p\bigr)(z)\,.
\]
Since $\lvert u(z)\rvert^p$ is finite, there exists  a radius
$0<r_z\le 2 \dist(z,E)<2\gamma R$ such that
\begin{equation}\label{e.choice}
\lvert u(z)\rvert^p\le 2 c_H^p R^{\beta p}\vint_{B(z,r_z)} G_{u,\beta,q,B_z}(x)^p\,d\mu(x)\,.
\end{equation}
Observe that the balls $B(z,r_z)$ form a cover of $F$ and  they have a uniformly bounded radii.

By the $5r$-covering lemma \cite[Lemma 1.7]{MR2867756}, there exist
pairwise disjoint balls $B(z_i,r_i)$, where $z_i\in F$ and
$r_i=r_{z_i}$ are as above, such that $F\subset\bigcup_{i=1}^{\infty}B(z_i,5r_i)$. 
It follows from~\eqref{e.mitta_F} and the doubling property of $\mu$ that
\begin{equation}\label{e.start_f}
\mu(B(x_0,R))\le \frac 2 {c_Q\gamma^Q} \mu(F) 
\le C(c_\mu)\sum_{i=1}^{\infty}\mu(B(z_i,r_i)).
\end{equation}
Let $i\in\N$. 
Since $z_i\in F\subset B(x_0,\gamma R)$ and $x_0\in E$, 
we have $\dist(z_i,E) < \gamma R$. 
If $x\in B(z_i,r_i)$, then 
\[
d(x_0,x)\le d(x_0,z_i) + d(z_i,x) 
\le \gamma R + 2\dist(z_i,E) < 3\gamma R= \frac R2,
\]
and thus $B(z_i,r_i)\subset B(x_0,R/2)$.  The same
argument shows that \[B_{z_i}=B(z_i,2d(z_i,E))\subset B(x_0,R/2)\,.\] Hence, by \eqref{e.samat}, we obtain
$G_{u,\beta,q,B_{z_i}}=G_{v,\beta,q,B_{z_i}}$  in $B(z_i,r_i)$.

Since $z_i\in F$, we have $u(z_i)=1-v(z_i)>\frac12$ and $0<d(z_i,E)<\gamma R<\diam(E)/8$.
By the choice of the radius $r_i$ in \eqref{e.choice}, we obtain
\begin{align*}
\Bigl(\frac12\Bigr)^p 
\le \lvert u(z_i)\rvert^p&\le 2
c_H^p R^{\beta p}\vint_{B(z_i,r_i)} G_{u,\beta,q,B_{z_i}}(x)^p\,d\mu(x)=2
c_H^p R^{\beta p}\vint_{B(z_i,r_i)} G_{v,\beta,q,B_{z_i}}(x)^p\,d\mu(x),
\end{align*}
and consequently
\begin{align*}
\mu(B(z_i,r_i))&\leq C(p,c_H) R^{\beta p} \int_{B(z_i,r_i)}G_{v,\beta,q,B_{z_i}}(x)^p\,d\mu(x)\\
&\le C(p,c_H) R^{\beta p} \int_{B(z_i,r_i)}G_{v,\beta,q,B(x_0,\Lambda R)}(x)^p\,d\mu(x)
\end{align*}
for every $i\in\N$.
Substituting these estimates into~\eqref{e.start_f} gives
\begin{align*}
\mu(B(x_0,R))
&\le C(p,c_\mu,c_H) R^{\beta p} \sum_{i=1}^{\infty} \int_{B(z_i,r_i)}G_{v,\beta,q,B(x_0,\Lambda R)}(x)^p\,d\mu(x)\\
&\le C(p,c_\mu,c_H) R^{\beta p} \int_{B(x_0,\Lambda R)} G_{v,\beta,q,B(x_0,\Lambda R)}(x)^p\,d\mu(x),
\end{align*}
where we also used the fact that the balls $B(z_i,r_i)\subset B(x_0,\Lambda R)$ are pairwise disjoint.
This shows that~\eqref{e.suffhere} holds, and the proof is complete.
\end{proof}

The following theorem characterizes the fractional capacity
density condition in connected metric spaces.
This result is a nonlocal or fractional analogue of \cite[Theorem 2]{MR2854110}.
To our knowledge, Theorem \ref{t.eqv_f} is new in this generality. 

\begin{theorem}\label{t.eqv_f}
Let $1\le p<\infty$, $1\le q<\infty$ and $0<\beta<1$. %
Assume that $X$ is connected. 
Let $E\subset X$ be a closed set.
Then the following conditions are equivalent:
\begin{itemize}
\item[(i)] $E$ satisfies the fractional $(\beta,p,q)$-capacity density condition~\eqref{e.fractionalk_capacity_density_condition}.
\item[(ii)] $E$ supports the boundary $(\beta,p,p,q)$-Poincar\'e inequality \eqref{eq.bdry_poinc_cap}.
\item[(iii)] $E$ supports the pointwise $(\beta,p,q)$-Hardy inequality \eqref{e.pw_hardy}.
\end{itemize}
\end{theorem}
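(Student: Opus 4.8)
The plan is to prove Theorem~\ref{t.eqv_f} by establishing the cycle of implications $(i)\Rightarrow(ii)\Rightarrow(iii)\Rightarrow(i)$, using the three lemmata just proved. The point is that each arrow is exactly one of Lemmata~\ref{l.bdry_poincare}, \ref{l.pointwise}, \ref{l.phtocd}, so the main work is checking that the hypotheses of each lemma are met under the standing assumptions of the theorem, namely that $X$ is connected and $p=t$.

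\begin{proof}
We prove $(i)\Rightarrow(ii)\Rightarrow(iii)\Rightarrow(i)$.

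Assume (i), so $E$ satisfies the fractional $(\beta,p,q)$-capacity density condition with some constants $c_0>0$ and $\Lambda>2$. Since $1\le p<\infty$, Lemma~\ref{l.qp} (applied with $t=p$, which satisfies $t\le\min\{p,q\}$ only when $p\le q$; in general we simply note that $p\le\min\{p,q\}$ fails if $p>q$, so instead we invoke the $(\beta,p,p,q)$-Poincar\'e inequality). More precisely: if $p\le q$ then $p\le\min\{p,q\}$ and Lemma~\ref{l.qp} gives the $(\beta,p,p,q)$-Poincar\'e inequality with $\lambda=1$; if $p>q$ then the quantitative reverse doubling condition needed by Lemma~\ref{l.qp_stronger} holds since $X$ is connected (see~\eqref{e.rev_dbl_decay} and the remark following it), and Lemma~\ref{l.qp_stronger} gives the $(\beta,p,p,q)$-Poincar\'e inequality with $\lambda=2$. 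In either case $X$ supports a fractional $(\beta,p,p,q)$-Poincar\'e inequality with constants $c_P>0$ and some $\lambda\ge 1$, both depending only on the structural data. Since $p\le t=p$, all hypotheses of Lemma~\ref{l.bdry_poincare} are satisfied, and we conclude that $E$ supports the boundary $(\beta,p,p,q)$-Poincar\'e inequality~\eqref{eq.bdry_poinc_cap}. This is (ii).

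Assume (ii). Then Lemma~\ref{l.pointwise}, applied with $t=p$, shows directly that $E$ supports the pointwise $(\beta,p,q)$-Hardy inequality~\eqref{e.pw_hardy} with $c_H=C(\beta,q,c_\mu,c_b,\lambda)$. This is (iii). Finally, assume (iii). Fix any $\Lambda>2$. Since $X$ is connected, Lemma~\ref{l.phtocd} applies and shows that $E$ satisfies the fractional $(\beta,p,q)$-capacity density condition~\eqref{e.fractionalk_capacity_density_condition} with constants $c_0=C(\beta,p,q,c_\mu,c_H,\Lambda)>0$ and $\Lambda$. This is (i), and the cycle is complete.
\end{proof}

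The only genuine subtlety, and the step I would flag as the place to be careful, is the availability of the $(\beta,p,p,q)$-Poincar\'e inequality needed in the first implication: when $p>q$ one cannot use Lemma~\ref{l.qp} and must instead invoke Lemma~\ref{l.qp_stronger}, whose hypothesis of quantitative reverse doubling is supplied \emph{for free} by the connectedness of $X$ together with~\cite[Lemma~3.7]{MR2867756} and the iteration leading to~\eqref{e.reverse_doubling}. Everything else is a direct citation of the three lemmata, with the observation that the choice $t=p$ is admissible in each ($p\le t$ in Lemma~\ref{l.bdry_poincare}, and $t$ arbitrary in Lemma~\ref{l.pointwise}), and that the constant $\Lambda>2$ may be fixed arbitrarily at the start of the $(iii)\Rightarrow(i)$ step so that all three conditions refer to a common dilation parameter.
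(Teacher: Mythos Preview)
Your proof is correct and follows essentially the same route as the paper: the cycle $(i)\Rightarrow(ii)\Rightarrow(iii)\Rightarrow(i)$ via Lemmata~\ref{l.bdry_poincare}, \ref{l.pointwise}, and~\ref{l.phtocd}, with connectedness supplying the reverse doubling needed for the $(\beta,p,p,q)$-Poincar\'e inequality. The only cosmetic difference is that the paper invokes Lemma~\ref{l.qp_stronger} uniformly rather than splitting into the cases $p\le q$ and $p>q$, so your case distinction is unnecessary but harmless.
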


\begin{proof}  
Since the metric space $X$ is connected, the measure  $\mu$ satisfies the quantitative reverse doubling condition~\eqref{e.reverse_doubling}, with
constants $\sigma>0$ and $c_\sigma>0$.
By Lemma \ref{l.qp_stronger}, we see that
$X$ supports the $(\beta,p,p,q)$-Poincar\'e inequality. Hence, the implication ~(i) to~(ii) follows from Lemma~\ref{l.bdry_poincare}.
Lemma~\ref{l.pointwise} shows that~(ii) implies~(iii).  The last 
 implication from (iii)  to~(i) follows
 from Lemma \ref{l.phtocd}. 
\end{proof}

 \begin{remark}
Suppose that a closed set $E\subset X$ supports the 
pointwise $(\beta,p,q)$-Hardy inequality~\eqref{e.pw_hardy}, for
some $1\le p,q<\infty$ and $0<\beta <1$. 
From   Definition \ref{d.bp_ph}(ii), it easily follows 
that $E$ supports the 
pointwise $(\hat \beta,\hat p,q)$-Hardy inequality if $p\le \hat p<\infty$ and $\beta\le \hat \beta<1$.
In the light of Theorem \ref{t.eqv_f}, a corresponding monotonicity result holds for 
the fractional capacity density condition in connected metric spaces, as well as for the boundary Poincar\'e inequality.
 Under certain restrictions,
the third parameter $q$ can be decreased, using Theorem \ref{l.q_monotonicity}.
Our main result, Theorem \ref{t.main_impro}, gives nontrivial extensions of the above mentioned three monotonicity properties in complete geodesic spaces.
\end{remark}

 \begin{remark}
The connectivity of the space $X$ is assumed in many results during this section. By examining the proofs, one could replace this assumption by requiring 
that, for all $0<\kappa<1$, there exists $0<c_R<1$ such that inequality \eqref{e.rev_dbl_decay} holds. This  holds in connected metric spaces by \cite[Lemma~3.7]{MR2867756}.
We omit  the corresponding slightly more general formulations.
\end{remark}

\section{Haj{\l}asz--Triebel--Lizorkin seminorms}\label{s.hajlasz}

One of our main goals is to show  
 a self-improvement property for 
the fractional capacity density condition~\eqref{e.fractionalk_capacity_density_condition}, see Theorem \ref{t.Riesz_and_Hajlasz}.
In order to show this result, we  make use of a self-improvement
property of a closely related Haj{\l}asz--Triebel-Lizorkin capacity density condition, which was recently established in  \cite{LMV}.
Our basic strategy of proof is to show that these two capacity density conditions are equivalent under certain assumptions.
For this purpose, we first recall local variants of Haj{\l}asz--Triebel--Lizorkin seminorms in this section. Later we need a Sobolev--Poincar\'e type inequality  in Lemma \ref{lemma:Sobolev-Poincare 2}. Its proof is recalled for convenience.

First  we
define fractional Haj{\l}asz gradients in open sets as in \cite{MR3471303}, by adapting the global definition in  \cite{MR2764899}.

\begin{definition}\label{d.fhg}
Let $0<\beta< 1$ and let $u\colon\Omega\to\R$  be a measurable function on an open set $\Omega\subset X$. A sequence $g=(g_k)_{k\in\Z}$ of nonnegative  measurable functions is called a fractional $\beta$-Haj{\l}asz gradient of $u$ in $\Omega$ if there exists
a set $E\subset \Omega$ such that $\mu(E)=0$ and for all 
$k\in \Z$ and $x,y\in \Omega\setminus E$ satisfying $2^{-k-1}\le d(x,y)<2^{-k}$, we have
\[
\lvert u(x)-u(y)\rvert \le d(x,y)^\beta(g_k(x)+g_k(y))\,.
\]
We denote by $\mathbb{D}^\beta_\Omega(u)$ the collection of all fractional
$\beta$-Haj{\l}asz gradients of $u$ in $\Omega$.
\end{definition}

Fix $1\le p<\infty$, $1\le q\le \infty$ and an open set $\Omega\subset X$. For a sequence $g=(g_k)_{k\in\Z}$ of measurable
functions in $\Omega$, we define
\[
\big\|g \big\|_{L^p(\Omega;\,l^q)}
=\big\|\|g\|_{l^q(\Z)}\big\|_{L^p(\Omega)}\,,
\]
where
\[
\|g\|_{l^q(\Z)}=
\begin{cases}
\big(\sum_{k\in\Z}|g_{k}|^{q}\big)^{1/q},&\quad\text{when }1\le q<\infty, \\
\;\sup_{k\in\Z}|f_{k}|,&\quad\text{when }q=\infty.
\end{cases}
\]
We use the following
Fefferman--Stein vector-valued maximal function inequality, we refer to \cite[Theorem 1.2]{MR2542655} for a proof.
If $1<p<\infty$ and $1< q\le \infty$, then there
exists a constant $C>0$ such that for all sequences  $(g_k)_{k\in\Z}$ of measurable
functions in $X$ we have
\begin{equation}\label{e.fs}
\lVert (Mg_k)_{k\in\Z}\rVert_{L^p(X;l^q)}\le C\lVert (g_k)_{k\in\Z}\rVert_{L^p(X;l^q)}\,,
\end{equation}
where $M g_k$ is the centered maximal function of $g_k$ for each $k\in\Z$,
see \eqref{e.max_funct_def_noncentered}.

\begin{definition}\label{d.htl}
Let $1<p<\infty$, $1\le q\le \infty$ and $0<\beta < 1$. The homogeneous Haj{\l}asz--Triebel--Lizorkin space $\dot{M}^\beta_{p,q}(\Omega)$ is the seminormed space of all measurable functions $u:\Omega\to\R$ on 
an open set $\Omega\subset X$ such that 
\[
\lvert u\rvert_{\dot{M}^{\beta}_{p,q}(\Omega)}=\inf_{g\in\mathbb{D}^\beta_\Omega(u)} \lVert g\rVert_{L^p(\Omega;l^q)}<\infty\,.
\]
\end{definition}

Notice that $\lvert u\rvert_{\dot{M}^{\beta}_{p,q}(\Omega)}\le \lvert u\rvert_{\dot{M}^{\beta}_{p,\hat q}(\Omega)}$ whenever  $1\le \hat q\le q\le \infty$.
We remark that $\lvert u\rvert_{\dot{M}^{\beta}_{p,q}(\Omega)}$ is a seminorm, but not a norm since it vanishes on constant functions. 

The following Sobolev--Poincar\'e inequality is from  \cite[Lemma 2.1]{MR3089750}. In the case $X=\R^n$, we refer to \cite[Theorem 2.3]{MR2764899}.

\begin{lemma}\label{lemma:Sobolev-Poincare 2}
Assume that the measure $\mu$ satisfies the quantitative doubling condition~\eqref{e.doubling_quant}, with exponent $Q>0$ and constant $c_Q>0$. 
Let $0<\beta <1$ and $0<t<Q/\beta$.
Then for every pair $\eps,\eps'\in (0,\beta)$ with $\eps<\eps'$, there exists
a constant $C=C(\eps,\eps',\beta,t,c_Q,Q)>0$ such that for all $x\in X$, $n\in\Z$, measurable functions $u$ in $B(x_0,2^{-n+1})$ and $(g_k)_{k\in\Z}\in \mathbb{D}_{B(x_0,2^{-n+1})}^\beta(u)$,
\begin{align*}
&\inf_{c\in\R}\left(\,\vint_{B(x_0,2^{-n})}|u(x)-c|^{t^*(\eps)}\,d\mu(x)\right)^{1/t^*(\eps)}\\
&\qquad \le C2^{-n\eps'}\sum_{j\ge n-2}2^{-j(\beta-\eps')}
\left(\,\vint_{B(x_0,2^{-n+1})}g_j(x)^{t}\,d\mu(x)\right)^{1/t},
\end{align*}
where $t^*(\eps)=Qt/(Q-\eps t)$.
\end{lemma}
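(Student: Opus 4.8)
The plan is to reduce the Sobolev--Poincar\'e inequality on a single ball to a telescoping argument over a dyadic chain of concentric balls, exactly as in the classical proof of Sobolev--Poincar\'e inequalities for Haj{\l}asz-type gradients. Fix $x_0\in X$, $n\in\Z$, a measurable function $u$ on $B(x_0,2^{-n+1})$, and a fractional $\beta$-Haj{\l}asz gradient $(g_k)_{k\in\Z}\in\mathbb{D}^\beta_{B(x_0,2^{-n+1})}(u)$. Write $B_j=B(x_0,2^{-j})$ for $j\ge n$, so $B_n$ is the ball appearing on the left and $B_{n-1}=B(x_0,2^{-n+1})$ carries the gradient. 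First I would record the basic one-scale estimate: for $j\ge n$, using the defining inequality $|u(x)-u(y)|\le d(x,y)^\beta(g_k(x)+g_k(y))$ for $x,y$ at distance $\approx 2^{-k}$ and averaging over $B_j\times B_j$, one gets
\[
\vint_{B_j}|u(x)-u_{B_j}|\,d\mu(x)\le C\,2^{-j\beta}\sum_{k\ge j-1}2^{-(k-j)\beta}\Bigl(\vint_{B_{j-1}}g_k(x)^t\,d\mu(x)\Bigr)^{1/t},
\]
or rather its $L^{t^*(\eps)}$-improved form obtained via the truncation/Maz'ya-type argument; the doubling condition \eqref{e.doubling_quant} makes passing from $B_j$ to $B_{j-1}$ harmless. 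The key point enabling the $L^{t^*}$-norm on the left, rather than just $L^1$, is the Sobolev-type gain built from the same gradient, which is where the hypothesis $t<Q/\beta$ (so that $t^*(\eps)=Qt/(Q-\eps t)$ is finite and $>t$ for $\eps\in(0,\beta)$) is used.

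Next I would telescope. Since the balls $B_j$ shrink to $\{x_0\}$ and $u$ is (after the reduction) a Lebesgue point a.e., for a.e.\ $x\in B_n$ one has $|u(x)-u_{B_n}|\le\sum_{j\ge n}|u_{B_{j+1}}-u_{B_j}|+\limsup|u(x)-u_{B_j}|$, and more usefully one estimates the $L^{t^*(\eps)}(B_n)$ norm of $u-c$ for a suitable constant $c$ (an average over $B_n$ or its limit) by summing the one-scale $L^{t^*(\eps)}$ estimates on each $B_j$, $j\ge n$. Each term contributes $C2^{-j\eps}$ times a tail $\sum_{k\ge j-1}2^{-(k-j)(\beta-\eps)}(\vint_{B_{j-1}}g_k^t)^{1/t}$, where the factor $2^{-j\eps}$ is the Sobolev gain converting the $L^1$-scale $2^{-j\beta}$ into an $L^{t^*(\eps)}$-scale with a $2^{j\eps}$ loss absorbed, leaving room to sum in $j$. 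Here I would split the exponent budget $\beta$ between two parameters $\eps<\eps'$ in $(0,\beta)$: use $\eps$ for the Sobolev exponent $t^*(\eps)$ on the left, and reserve $\eps'-\eps>0$ of decay to make the double sum $\sum_{j\ge n}\sum_{k\ge j-1}$ converge, after which $\sum_j 2^{-j(\beta-\eps')}$-type geometric factors survive with overall prefactor $2^{-n\eps'}$. Interchanging the order of summation and collecting the geometric series in $k-j$ yields precisely the claimed bound
\[
C\,2^{-n\eps'}\sum_{j\ge n-2}2^{-j(\beta-\eps')}\Bigl(\vint_{B(x_0,2^{-n+1})}g_j(x)^t\,d\mu(x)\Bigr)^{1/t},
\]
where the shift to $j\ge n-2$ absorbs the off-by-one/off-by-two indices coming from the dyadic annuli and from $B_{j-1}\subset B_{n-1}$ (here one replaces $\vint_{B_{j-1}}$ by $\vint_{B_{n-1}}$ at the cost of a doubling constant, which is legitimate since all these balls sit inside $B(x_0,2^{-n+1})$).

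The main obstacle, and the step deserving the most care, is the upgrade from the $L^1$ telescoping estimate to the $L^{t^*(\eps)}$ estimate on each scale: this is the genuine Sobolev (as opposed to Poincar\'e) content. The standard route is a weak-type/distribution-function argument — one shows a weak $(1,t^*)$ bound for the local maximal-type operator built from the gradient sequence, e.g.\ via a Calder\'on--Zygmund or $5r$-covering decomposition at each level $s>0$ — and then invokes a Kolmogorov-type lemma (in the spirit of Lemma \ref{l.kolmogorov}, though with $\vint$-normalization) to pass to the strong $L^{t^*(\eps)}$ norm; the strict inequality $t<t^*(\eps)$ is exactly what makes this passage work. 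Since this lemma is quoted from \cite[Lemma 2.1]{MR3089750}, I would in fact cite that reference for this core step and only sketch how the parameter bookkeeping with $\eps,\eps',\beta,t$ and the index shift to $j\ge n-2$ is arranged, rather than reproving the weak-type estimate from scratch.
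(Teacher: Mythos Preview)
Your approach is workable but takes a genuinely different route from the paper. The paper does \emph{not} run a direct telescoping/weak-type argument on the fractional gradient sequence $(g_k)$. Instead it uses a snowflaking trick: since $0<\eps<1$, the function $d^\eps(x,y)=d(x,y)^\eps$ is again a metric on $X$, and $\mu$ still satisfies \eqref{e.doubling_quant} in $(X,d^\eps,\mu)$ with exponent $Q/\eps$. One then packages the whole sequence $(g_k)$ into a single function
\[
h=\Bigl(\sum_{j\ge n-2}2^{-j(\beta-\eps)t}g_j^t\Bigr)^{1/t},
\]
checks via the defining inequality of $\mathbb{D}^\beta$ that $|u(x)-u(y)|\le d^\eps(x,y)(h(x)+h(y))$, and applies the \emph{classical} Haj{\l}asz Sobolev--Poincar\'e inequality (for a single Haj{\l}asz gradient, from \cite{MR2039955,MR4107806}) in the snowflaked space. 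This delivers the $L^{t^*(\eps)}$ bound in one stroke; the parameter $\eps'$ then enters only in the elementary step of unpacking $(\vint h^t)^{1/t}$ back into the sum over $j$.

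What each approach buys: the paper's snowflaking reduces everything to a known black-box result and a two-line computation, so the Sobolev gain (your ``main obstacle'') is never reproved. Your direct route would ultimately work, but it effectively re-derives the Haj{\l}asz Sobolev--Poincar\'e inequality from scratch via weak-type bounds and Kolmogorov. Also, a small imprecision in your sketch: the telescoping chain $B_j=B(x_0,2^{-j})$ shrinks to $x_0$, not to a generic point $x\in B_n$, so it does not by itself control $|u(x)-c|$ pointwise; for the Sobolev embedding one needs chains $B(x,2^{-j})$ centered at each $x$, which is where the maximal function and the weak-type/Kolmogorov machinery genuinely enter.
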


\begin{proof}
For convenience, we remind reader of the proof from \cite[Lemma 2.1]{MR3089750}. It is 
 based  on a Sobolev--Poincar\'e inequality for Haj{\l}asz gradients, which
originally appeared in \cite{MR2039955}.  Below, we use the slightly more convenient
formulation appearing in \cite{MR4107806}.

Since $0<\varepsilon<1$ the function $d^\varepsilon$ defined by  $d^\varepsilon(x,y)=d(x,y)^\varepsilon$ for $x,y\in X$ is a metric in $X$.
The measure $\mu$ is doubling also with respect
to this metric and condition \eqref{e.doubling_quant} holds
in the metric measure space $(X,d^\varepsilon,\mu)$, with
exponent $Q/\varepsilon>Q/\beta$ and constant $c_Q>0$.  
We let  \[
h=\left(\sum_{j\ge n-2} 2^{-j(\beta-\varepsilon)t}g_j^t\right)^{1/t}\,.\]
Denote by $E$ the exceptional set for $(g_k)_{k\in\Z}\in \mathbb{D}_{B(x_0,2^{-n+1})}^\beta(u)$ as in Definition \ref{d.fhg}. 
Assume that $x,y\in B(x_0,2^{-n+1})\setminus E$ with $x\not=y$. Let $j\ge n-2$ be such that
$2^{-j-1}\le d(x,y)<2^{-j}$. Then
\begin{equation}\label{e.hajlasz}
\begin{split}
\lvert u(x)-u(y)\rvert &\le d(x,y)^\beta(g_j(x)+g_j(y))\\&\le d(x,y)^\varepsilon(h(x)+h(y))=d^\varepsilon(x,y)(h(x)+h(y))\,.
\end{split}
\end{equation}
This shows that the function $h$ is a so-called Haj{\l}asz-gradient of the function $u$ in the
ball $\{z\in X\,:\, d^\eps(z,x_0)<2^{-\eps(n+1)}\}$ of the metric
measure space $(X,d^\eps,\mu)$.
Therefore, we can apply \cite[Theorem 20 and Remark 21]{MR4107806}
in  $(X,d^\varepsilon,\mu)$ to obtain inequality
\begin{align*}
\inf_{c\in\R}\left(\,\vint_{B(x_0,2^{-n})}|u(x)-c|^{t^*(\eps)}\,d\mu(x)\right)^{1/t^*(\eps)}
\le C2^{-n\varepsilon} \left(\vint_{B(x_0,2^{-n+1})} h(x)^t\,d\mu(x)\right)^{1/t}\,.
\end{align*}
Here it is important to observe that $B(x_0,2^{-n})=\{y\in X\,:\,d^\varepsilon(y,x_0)<2^{-n\varepsilon}\}$.
Finally, it is straightforward
to show that
\begin{align*}
\left(\vint_{B(x_0,2^{-n+1})} h(x)^t\,d\mu(x)\right)^{1/t}
\le C 2^{-n(\varepsilon'-\varepsilon)}\sum_{j\ge n-2} 2^{-j(\beta-\varepsilon')}\left(\vint_{B(x_0,2^{-n+1})} g_j(y)^t\,d\mu(y)\right)^{1/t}\,,
\end{align*}
Indeed, if $0<t< 1$, then one proceeds as in \cite[Theorem 2.3]{MR2764899}. The
remaining case $t\ge 1$ is even easier, and this concludes the sketch of the proof. 
\end{proof}

\section{Seminorm comparison}\label{s.semi}

The following seminorm equivalence
\begin{equation}\label{e.besov}
C^{-1}\lvert u\rvert_{\dot M^{\beta}_{p,p}(X)}^p\le \int_{X}\int_{X}\frac{|u(x)-u(y)|^p}{d(x,y)^{\beta p}\mu(B(x,d(x,y)))}\,d\mu(y)\,d\mu(x)
\le C\lvert u\rvert_{\dot M^{\beta}_{p,p}(X)}^p\,,
\end{equation}
where $1\le p<\infty$ and $C$ is independent of $u$, 
is shown in \cite[Proposition 4.1]{MR2764899}. We establish 
a local variant of this equivalence, and also extend
it to the values of parameters $1\le q<\infty$ such that 
$\beta >Q(1/p-1/q)$. 
Here $Q$ is the exponent in the reverse doubling condition \eqref{e.doubling_quant}.  
Lemma \ref{l.stronger} and Lemma \ref{l.weaker} are the main results of this section; their proofs  are adaptations of the ideas from \cite[Proposition 4.1]{MR2764899}, combined with the use of the Fefferman--Stein maximal function inequality \eqref{e.fs}.
The cases of $\R^n$ and its uniform domains are treated in \cite{MR3667439}, where  sharpness of condition $\beta >Q(1/p-1/q)$ is discussed. We also refer to \cite{MR3089750,MR1097208,MR2250142}. 

We also use the following variant of a  discrete Young's inequality from \cite[Lemma 3.1]{MR3471303}.

\begin{lemma}\label{summing lemma}
Let $1<a<\infty$, $0<b<\infty$ and $c_k\ge 0$, $k\in\Z$. There is a constant $C=C(a,b)$ such that
\[
\sum_{k\in\Z}\Big(\sum_{j\in\Z}a^{-|j-k|}c_j\Big)^b\le C\sum_{j\in\Z}c_j^b.
\]
\end{lemma}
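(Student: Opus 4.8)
The plan is to treat the ranges $b\ge 1$ and $0<b<1$ separately, in both cases reducing the estimate to the summability of the kernel $(a^{-|m|})_{m\in\Z}$, which holds precisely because $a>1$: by summing the geometric series, $\sum_{m\in\Z}a^{-|m|}=1+2\sum_{m=1}^\infty a^{-m}=\tfrac{a+1}{a-1}<\infty$. After the reindexing $m=k-j$ (legitimate since all terms are nonnegative), the inner sum becomes $\sum_{m\in\Z}a^{-|m|}c_{k-m}$, so the left-hand side is $\sum_{k\in\Z}\bigl(\textstyle\sum_{m\in\Z}a^{-|m|}c_{k-m}\bigr)^b$, a discrete convolution of the summable kernel $(a^{-|m|})_{m}$ with $(c_k)_{k}$.

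For $b\ge 1$ I would apply Minkowski's inequality for the $\ell^b(\Z)$-norm in the variable $k$, viewing the convolution as the sum over $m$ of the shifted sequences $k\mapsto a^{-|m|}c_{k-m}$, and then use translation invariance of counting measure:
\[
\Bigl(\sum_{k\in\Z}\bigl(\textstyle\sum_{m\in\Z}a^{-|m|}c_{k-m}\bigr)^b\Bigr)^{1/b}\le\sum_{m\in\Z}a^{-|m|}\Bigl(\sum_{k\in\Z}c_{k-m}^b\Bigr)^{1/b}=\Bigl(\sum_{m\in\Z}a^{-|m|}\Bigr)\Bigl(\sum_{k\in\Z}c_k^b\Bigr)^{1/b}.
\]
Raising to the power $b$ gives the claim with $C=\bigl(\tfrac{a+1}{a-1}\bigr)^b$.

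For $0<b<1$ I would instead exploit the subadditivity of $t\mapsto t^b$ on $[0,\infty)$, i.e. $(\sum_m x_m)^b\le\sum_m x_m^b$ for $x_m\ge 0$, and then interchange the order of the nonnegative double series by Tonelli's theorem:
\[
\sum_{k\in\Z}\bigl(\textstyle\sum_{m\in\Z}a^{-|m|}c_{k-m}\bigr)^b\le\sum_{k\in\Z}\sum_{m\in\Z}a^{-b|m|}c_{k-m}^b=\Bigl(\sum_{m\in\Z}a^{-b|m|}\Bigr)\sum_{k\in\Z}c_k^b,
\]
and since $a^b>1$ we have $\sum_{m\in\Z}a^{-b|m|}=\tfrac{a^b+1}{a^b-1}<\infty$, yielding $C=\tfrac{a^b+1}{a^b-1}$. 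There is no genuine obstacle here; the only points that deserve a word of care are the legitimacy of reindexing and of swapping the nonnegative double series (both are Tonelli), and, in the case $b\ge1$, making sure Minkowski's inequality is applied to the $\ell^b$-norm in the correct variable. I do not see a single argument that handles both ranges more slickly than this case split.
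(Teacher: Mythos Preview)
Your proof is correct. The paper does not actually prove this lemma: it simply quotes it from \cite[Lemma~3.1]{MR3471303} as a known variant of the discrete Young inequality, so there is no ``paper's own proof'' to compare against. Your case split (Minkowski for $b\ge 1$, subadditivity of $t\mapsto t^b$ for $0<b<1$) is precisely the standard way to establish such convolution estimates, and your bookkeeping of the geometric sums and the use of Tonelli/translation invariance are all in order.
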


 The following lemma provides
a local counterpart of the second inequality in \eqref{e.besov}, also  extended
to exponents $1\le q<\infty$ satisfying $\beta >Q(\frac{1}{p}-\frac{1}{q})$.

\begin{lemma}\label{l.stronger}
Assume that the measure $\mu$ satisfies the quantitative doubling condition~\eqref{e.doubling_quant}, with exponent $Q>0$ and constant $c_Q>0$.
Let $0<\beta < 1$ and $1\le  p,q<\infty$ be such that $\beta >Q(\frac{1}{p}-\frac{1}{q})$.
Let $x_0\in X$ and $r>0$.
Then there exists a constant $C=C(\beta,p,q,c_\mu,c_Q,Q)>0$  such that
\[
\int_{B(x_0,r)}\left(\int_{B(x_0,r)}\frac{|u(x)-u(y)|^q}{d(x,y)^{\beta q}\mu(B(x,d(x,y)))}\,d\mu(y)\right)^{\frac{p}{q}}\,d\mu(x)\le C\lvert u\rvert_{\dot{M}^{\beta}_{p,q}(B(x_0,9 r))}^p
\]
for all  $u\in L^1(B(x_0,9 r))$.
\end{lemma}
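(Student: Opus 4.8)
\emph{Proof sketch.} The plan is to fix a fractional $\beta$-Haj{\l}asz gradient $g=(g_j)_{j\in\Z}\in\mathbb D^\beta_{B(x_0,9r)}(u)$ (we may assume $\lvert u\rvert_{\dot M^\beta_{p,q}(B(x_0,9r))}<\infty$, otherwise there is nothing to prove), extend each $g_j$ by zero outside $B(x_0,9r)$, and establish an a.e.\ pointwise estimate
\begin{equation}\label{eq.plan.ptwise}
G_{u,\beta,q,B(x_0,r)}(x)^q\le C\sum_{j\in\Z}M_t(g_j)(x)^q,\qquad M_tf:=\bigl(M(\lvert f\rvert^t)\bigr)^{1/t},\qquad x\in B(x_0,r),
\end{equation}
where the auxiliary exponent $t$ is chosen as follows. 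Using the hypothesis $\beta>Q(\tfrac1p-\tfrac1q)$, pick $t$ with $\tfrac{Qq}{Q+q\beta}<t<\min\{p,q,Q/\beta\}$; this interval is nonempty precisely because $\beta>Q(\tfrac1p-\tfrac1q)$ (the inequality $\tfrac{Qq}{Q+q\beta}<p$ is equivalent to it, while $\tfrac{Qq}{Q+q\beta}<q$ and $\tfrac{Qq}{Q+q\beta}<Q/\beta$ hold automatically). Set $\eps=Q/t-Q/q$, so that $0<\eps<\beta$, fix $\eps'\in(\eps,\beta)$, and put $\delta=\beta-\eps'>0$. With these choices Lemma \ref{lemma:Sobolev-Poincare 2} is applicable (since $0<t<Q/\beta$ and $0<\eps<\eps'<\beta$) and its Sobolev exponent satisfies $t^*(\eps)=q$, while the Fefferman--Stein inequality \eqref{e.fs} applies with exponents $p/t>1$ and $q/t>1$. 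Granting \eqref{eq.plan.ptwise}, integrating over $B(x_0,r)$ and applying \eqref{e.fs} to the sequence $(g_j^t)_j$ gives
\[
\int_{B(x_0,r)}G_{u,\beta,q,B(x_0,r)}(x)^p\,d\mu\le C\int_X\Bigl(\sum_j M_t(g_j)^q\Bigr)^{p/q}d\mu\le C\int_X\Bigl(\sum_j g_j^q\Bigr)^{p/q}d\mu= C\lVert g\rVert_{L^p(B(x_0,9r);l^q)}^p,
\]
and taking the infimum over all $g\in\mathbb D^\beta_{B(x_0,9r)}(u)$ finishes the proof.

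To prove \eqref{eq.plan.ptwise}, I would fix a Lebesgue point $x\in B(x_0,r)$ of $u$ and let $k_r$ be the largest integer with $2^{-k_r}\ge 2r$ (so $2^{-k_r}\in[2r,4r)$). Decomposing $B(x_0,r)\setminus\{x\}$ into the dyadic shells $A_k(x)=\{y:2^{-k-1}\le d(x,y)<2^{-k}\}$, $k\ge k_r$, and using the doubling condition to compare $\mu(B(x,d(x,y)))$ with $\mu(B(x,2^{-k}))$ on $A_k(x)$, one obtains
\[
G_{u,\beta,q,B(x_0,r)}(x)^q\le C\sum_{k\ge k_r}2^{k\beta q}\vint_{B(x,2^{-k})}\lvert u(x)-u(y)\rvert^q\,d\mu(y).
\]
Writing $a_l(x)=\bigl(\vint_{B(x,2^{-l})}\lvert u-u_{B(x,2^{-l})}\rvert^q\,d\mu\bigr)^{1/q}$, the triangle inequality together with the telescoping identity $u(x)=\lim_{l\to\infty}u_{B(x,2^{-l})}$ (valid at Lebesgue points) yields $\vint_{B(x,2^{-k})}\lvert u(x)-u(y)\rvert^q\,d\mu\le C\bigl(\sum_{l\ge k}a_l(x)\bigr)^q$. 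Next I would apply Lemma \ref{lemma:Sobolev-Poincare 2} at each scale $l\ge k_r$ (all the relevant balls $B(x,2^{-l+1})$ lie in $B(x_0,9r)$, since $2^{-l+1}<8r$), which bounds $a_l(x)$ by $C\,2^{-l\eps'}\sum_{j\ge l-2}2^{-j(\beta-\eps')}M_t(g_j)(x)$; interchanging the order of summation, summing the geometric series in $l$, and using $2^{-(j-k)\delta}\mathbf 1_{\{j\ge k-2\}}\le C\,2^{-\delta\lvert j-k\rvert}$, one reaches
\[
2^{k\beta q}\vint_{B(x,2^{-k})}\lvert u(x)-u(y)\rvert^q\,d\mu\le C\Bigl(\sum_{j\in\Z}(2^\delta)^{-\lvert j-k\rvert}M_t(g_j)(x)\Bigr)^q.
\]
Summing over $k$ and applying the discrete Young inequality, Lemma \ref{summing lemma}, with $a=2^\delta>1$ and $b=q$, produces \eqref{eq.plan.ptwise}.

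I expect the only genuinely delicate point to be the exponent bookkeeping in the first paragraph: one must select a single auxiliary exponent $t$ that is simultaneously smaller than $Q/\beta$ (so that Lemma \ref{lemma:Sobolev-Poincare 2} applies with $t^*(\eps)=q$) and smaller than $\min\{p,q\}$ (so that \eqref{e.fs} applies), and the feasibility of $t<p$ is exactly equivalent to the standing assumption $\beta>Q(\tfrac1p-\tfrac1q)$. Once this is in place the remaining steps --- the annular decomposition, the telescoping/Poincar\'e estimate, the interchange of the double sum followed by geometric summation, and the final appeal to Lemmas \ref{summing lemma} and \eqref{e.fs} --- are mechanical. (For $p=q$ one can bypass Lemma \ref{lemma:Sobolev-Poincare 2} altogether by estimating $G_{u,\beta,q,B(x_0,r)}(x)^q\le C\sum_{k}\bigl(g_k(x)^q+\vint_{B(x,2^{-k})}g_k^q\,d\mu\bigr)$ directly from the Haj{\l}asz gradient and concluding by Fubini; the argument above is chosen because it treats all admissible $p,q$ uniformly.)
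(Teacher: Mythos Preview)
Your proposal is correct and follows essentially the same strategy as the paper's proof: dyadic shell decomposition of $G_{u,\beta,q,B(x_0,r)}(x)^q$, application of the Sobolev--Poincar\'e Lemma~\ref{lemma:Sobolev-Poincare 2} with the same parameter selection (the paper writes $t=qQ/(Q+q\eps)$ with $\eps>Q(\tfrac1p-\tfrac1q)$, which is exactly your choice rephrased), then Lemma~\ref{summing lemma} and the Fefferman--Stein inequality~\eqref{e.fs}. The only organizational difference is that the paper splits $\vint_{B(x,2^{-k})}\lvert u(x)-u(y)\rvert^q\,d\mu$ into $\lvert u(x)-u_{B_k(x)}\rvert^q$ and $\vint_{B_k(x)}\lvert u-u_{B_k(x)}\rvert^q\,d\mu$ and estimates the two terms separately, whereas you absorb both into the single telescoping bound $C\bigl(\sum_{l\ge k}a_l(x)\bigr)^q$; this is a harmless streamlining and the remaining computations coincide.
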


\begin{proof}
We denote by $C$ a constant
that depends on $\beta$, $p$, $q$, $c_\mu$, $c_Q$ and $Q$ only.
We take $\lambda=9$.
Fix a Lebesgue point $x\in B(x_0,r)$ of $u$.
For each $j\in \Z$, we write $B_j(x)=B(x,2^{-j})$ and
\[A_j(x)=\{y\in B(x_0,r) :  2^{-j-1}\le d(x,y)<2^{-j}\}.\]
Choose $n\in\Z$ such that
$2^{-n-1}\le 2 r<2^{-n}$. 
We estimate
\begin{equation}\label{e.start}
\begin{split}
&\int_{B(x_0,r)}\frac{|u(x)-u(y)|^q}{d(x,y)^{\beta q}\mu(B(x,d(x,y)))}\,d\mu(y)\\
&\qquad =
\sum_{j=n}^\infty \int_{A_j(x)}\frac{|u(x)-u(y)|^q}{d(x,y)^{\beta q}\mu(B(x,d(x,y)))}\,d\mu(y)\\
&\qquad \le C(q,c_\mu)\sum_{j=n}^\infty 2^{j\beta q} \vint_{B_j(x)} |u(x)-u(y)|^q\,d\mu(y)\\
&\qquad \le C(q,c_\mu)\sum_{j=n}^\infty 2^{j\beta q}  |u(x)-u_{B_j(x)}\rvert^q + C(q,c_\mu)\sum_{j=n}^\infty 2^{j\beta q}\vint_{B_j(x)}  \vert u(y)-u_{B_j(x)}|^q\,d\mu(y)\,.
\end{split}
\end{equation}
Fix $g\in \mathbb{D}^\beta_{B(x_0,\lambda r)}(u)$
and define  $h_k=\mathbf{1}_{B(x_0,\lambda r)}  g_k$ for every $k\in\Z$. 
Observe that, for all $j\ge n$,  \[B(x,2^{-j+1})\subset B(x_0,9r)=B(x_0,\lambda r)\,.\] 
Let $Q>0$ and $c_Q>0$ be the constants in the quantitative doubling condition \eqref{e.doubling_quant}.
Since $\beta>Q(\frac{1}{p}-\frac{1}{q})$, there exists $0<t<Q/\beta$ and $0<\varepsilon<\varepsilon'<\beta$ such that $Q(\frac{1}{p}-\frac{1}{q})<\varepsilon$ and 
$Qt/(Q-\varepsilon t)=q$.
Observe that
\begin{equation}\label{e.exps}
0<t=\frac{qQ}{Q+q\varepsilon}<\min\{p,q\}\,.
\end{equation}
For every
 $j\ge n$, we apply  H\"older's inequality and Lemma \ref{lemma:Sobolev-Poincare 2} as follows
\begin{align*}
|u(x)-u_{B_j(x)}\rvert^q &\le \left(\sum_{k=0}^\infty \lvert u_{B_{j+k+1}(x)}-u_{B_{j+k}(x)}\rvert\right)^q\\
&\le C(c_\mu,q)\left(\sum_{k=0}^\infty \vint_{B_{j+k}(x)} \lvert u(y)-u_{B_{j+k}(x)}\rvert\,d\mu(y)\right)^q\\
&\le C(c_\mu,q)\left(\sum_{k=0}^\infty \left(\vint_{B_{j+k}(x)} \lvert u(y)-u_{B_{j+k}(x)}\rvert^q\,d\mu(y)\right)^{1/q}\right)^q\\
&\le C\left(\sum_{k=0}^\infty  2^{-(j+k)\eps'}\sum_{m\ge (j+k)-2}2^{-m(\beta-\eps')}
\left(\,\vint_{B_{j+k-1}(x)} h_m(y)^{t}\,d\mu(y)\right)^{1/t}
\right)^q\,.
\end{align*}
Hence, we find that
\begin{align*}
\sum_{j=n}^\infty2^{j\beta q}|u(x)-u_{B_j(x)}\rvert^q &\le C\sum_{j=n}^\infty2^{j\beta q}\left(\sum_{k=j}^\infty  2^{-k\eps'}\sum_{m\ge k-2}2^{-m(\beta-\eps')} (Mh_m^{t}(x))^{1/t}\right)^q\\
&\le C\sum_{j=n}^\infty\left(2^{j(\beta-\eps')}\sum_{m\ge j-2}2^{-m(\beta-\eps')} (Mh_m^{t}(x))^{1/t}\right)^q\\
&\le C\sum_{j=n}^\infty \left(\sum_{m\ge j-2}2^{-(m-j)(\beta-\eps')} (Mh_m^{t}(x))^{1/t}\right)^q\,.
\end{align*}
By Lemma \ref{summing lemma} with $a=2^{\beta-\eps'}>1$ and $b=q$, we obtain
\begin{equation}\label{e.first_h}
\begin{split}
\sum_{j=n}^\infty 2^{j\beta q}|u(x)-u_{B_j(x)}\rvert^q
&\le C\sum_{m=-\infty}^\infty(Mh_m^{t}(x))^{q/t}\,.
\end{split}
\end{equation}

Next we estimate the second term in the last line of  \eqref{e.start}.
By Lemma \ref{lemma:Sobolev-Poincare 2}, for every  $j\ge n$,
\begin{align*}
&2^{j\beta q}\vint_{B_j(x)}  \vert u(y)-u_{B_j(x)}|^q\,d\mu(y)\\
&\quad \le C2^{j\beta q}\left( 2^{-j\eps'}\sum_{k\ge j-2}2^{-k(\beta-\eps')}
\left(\,\vint_{B(x,2^{-j+1})} \mathbf{1}_{B(x_0,\lambda r)}(y)g_k(y)^{t}\,d\mu(y)\right)^{1/t} \right)^{q}\\
&\quad \le C2^{j\beta q}\left( 2^{-j\eps'}\sum_{k\ge j-2}2^{-k(\beta-\eps')}
\left(Mh_k^{t}(x)\right)^{1/t} \right)^{q}\\
&\quad \le C\left( \sum_{k\ge j-2}2^{-(k-j)(\beta-\eps')}
\left(Mh_k^{t}(x)\right)^{1/t} \right)^{q}\,.
\end{align*}
Again, by Lemma \ref{summing lemma} with $a=2^{(\beta-\varepsilon')}>1$ and $b=q$,  we get
\begin{equation}\label{e.second_g}
\begin{split}
&\sum_{j=n}^\infty 2^{j\beta q}\vint_{B_j(x)}  \vert u(y)-u_{B_j(x)}|^q\,d\mu(y)\\
&\quad \le C\sum_{j=n}^\infty\left( \sum_{k\ge j-2}2^{-(k-j)(\beta-\eps')}
\left(Mh_k^{t}(x)\right)^{1/t} \right)^{q}\le  C\sum_{k=-\infty}^\infty
\left(Mh_k^{t}(x)\right)^{q/t}\,.
\end{split}
\end{equation}

By combining \eqref{e.start}, \eqref{e.first_h} and \eqref{e.second_g}, we obtain
\begin{align*}
&\left(\int_{B(x_0,r)}\left(\int_{B(x_0,r)}\frac{|u(x)-u(y)|^q}{d(x,y)^{\beta q}\mu(B(x,d(x,y)))}\,d\mu(y)\right)^{\frac{p}{q}}\,d\mu(x)\right)^{1/p}\\
& \quad\le     
 C\left\lVert \left(\sum_{k=-\infty}^\infty
\left(Mh_k^{t}\right)^{q/t}\right)^{t/q}\right\rVert_{L^{p/t}(X)}^{1/t}\le     
C\left\lVert \left(\sum_{k=-\infty}^\infty
h_k^{q}\right)^{1/q}\right\rVert_{L^{p}(X)}\,,
\end{align*}
where the last step follows from \eqref{e.exps} and the Fefferman--Stein  inequality
\eqref{e.fs}. It remains to observe that 
\[
\left\lVert \left(\sum_{k=-\infty}^\infty
h_k^{q}\right)^{1/q}\right\rVert_{L^{p}(X)}\le C\left\lVert \left(\sum_{k=-\infty}^\infty
g_k^{q}\right)^{1/q}\right\rVert_{L^{p}(B(x_0,\lambda r))}\,.
\]
Hence, the proof is completed by taking infimum over all functions $g\in \mathbb{D}^\beta_{B(x_0,\lambda r)}(u)$.
\end{proof}

Next turn to a counterpart of the first inequality in \eqref{e.besov}.
We begin the following variant of the fractional Haj{\l}asz gradient  from \cite[p. 6]{MR2764899}. 
\begin{definition}\label{d.fhgtil}
	Let $\Omega\subset X$ be an open set, $0<\beta< 1$, $0<\varepsilon\le \beta$, $N\in\N$ and $u:\Om \to \R$ be a measurable function. We denote by ${\mathbb{D}}^{\beta,\varepsilon,N}_{\Om}(u)$ the collection of all sequences $g=(g_k)_{k\in\Z}$ of nonnegative measurable functions for which there exists a set $E$ with $\mu(E)=0$ such that, for all $x,y\in \Om\setminus E$, we have
\[
	|u(x)-u(y)|
	\leq d(x,y)^{\beta-\varepsilon}\sum_{j\in\Z} 2^{-j\varepsilon}\left(g_j(x)+g_j(y)\right)\mathbf{1}_{[2^{-j-N},\infty)}(d(x,y)).
\]
\end{definition}

We use the following two-sided comparison of the Haj{\l}asz--Triebel--Lizorkin seminorm, which is essentially from \cite[Theorem 2.1]{MR2764899}. For convenience of the reader
and completeness, we recall the proof below.

\begin{lemma}\label{lemma:varient_seminorm}
Let $\Omega\subset X$ be an open set,
$1\le p,q<\infty$, $0<\beta< 1$, $0<\varepsilon\le \beta$ and $N\in\N$. 
Assume that $u:\Om\subset X \to \R$ is a measurable function. Then there exists a constant $C=C(N,q,\varepsilon)>0$ such that
	\[
	\inf_{{g}\in{\mathbb{D}}^{\beta,\varepsilon,N}_{\Om}(u)}\|g\|_{L^p(\Om,\ell^q)}
	\le 
	 \inf_{{g}\in\mathbb{D}^\beta_{\Om}(u)}\|g\|_{L^p(\Om,\ell^q)}
	\leq C\inf_{{g}\in{\mathbb{D}}^{\beta,\varepsilon,N}_{\Om}(u)}\|g\|_{L^p(\Om,\ell^q)}\,.	
	\]
\end{lemma}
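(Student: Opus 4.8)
The plan is to prove the two inequalities by explicit constructions passing between the two gradient classes, each of which preserves measurability and changes the $L^p(\Omega,\ell^q)$-norm only by a harmless constant. Throughout, $k\in\Z$ denotes the unique index with $2^{-k-1}\le d(x,y)<2^{-k}$ for the pair $x,y$ under consideration.

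First I would handle the left-hand inequality, which amounts to the inclusion $\mathbb{D}^\beta_\Omega(u)\subset\mathbb{D}^{\beta,\varepsilon,N}_\Omega(u)$. Given $g=(g_k)\in\mathbb{D}^\beta_\Omega(u)$ with exceptional set $E$, fix $x,y\in\Omega\setminus E$ with $x\ne y$. Since $N\ge1$ we have $d(x,y)\ge2^{-k-1}\ge2^{-k-N}$, so the term $j=k$ of the sum in Definition~\ref{d.fhgtil} is present; moreover $d(x,y)^{\varepsilon}\le2^{-k\varepsilon}$ because $d(x,y)<2^{-k}$. Hence $|u(x)-u(y)|\le d(x,y)^{\beta}(g_k(x)+g_k(y))\le d(x,y)^{\beta-\varepsilon}2^{-k\varepsilon}(g_k(x)+g_k(y))$, and since every remaining term of the sum is nonnegative this exhibits $g$ as an element of $\mathbb{D}^{\beta,\varepsilon,N}_\Omega(u)$ with the same $L^p(\Omega,\ell^q)$-norm. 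Taking the infimum gives the first inequality with constant $1$.

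For the right-hand inequality, given $g=(g_k)\in\mathbb{D}^{\beta,\varepsilon,N}_\Omega(u)$ I would set $\widetilde g_k=2^{\varepsilon}\sum_{j\ge k-N}2^{-(j-k)\varepsilon}g_j$ and prove that $\widetilde g=(\widetilde g_k)\in\mathbb{D}^\beta_\Omega(u)$ with $\|\widetilde g\|_{L^p(\Omega,\ell^q)}\le C(N,q,\varepsilon)\|g\|_{L^p(\Omega,\ell^q)}$; taking the infimum over $g$ then finishes the proof. For the gradient property, observe that on the annulus $2^{-k-1}\le d(x,y)<2^{-k}$ the cutoff $\mathbf{1}_{[2^{-j-N},\infty)}(d(x,y))$ vanishes unless $j\ge k-N$, and $d(x,y)^{\beta-\varepsilon}=d(x,y)^{\beta}d(x,y)^{-\varepsilon}\le d(x,y)^{\beta}2^{(k+1)\varepsilon}$; substituting these into the defining inequality of $\mathbb{D}^{\beta,\varepsilon,N}_\Omega(u)$ and relabelling yields $|u(x)-u(y)|\le d(x,y)^{\beta}\,2^{\varepsilon}\sum_{j\ge k-N}2^{-(j-k)\varepsilon}(g_j(x)+g_j(y))=d(x,y)^{\beta}(\widetilde g_k(x)+\widetilde g_k(y))$, with the same exceptional set $E$. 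For the norm estimate, I would use that $2^{-(j-k)\varepsilon}\le2^{2N\varepsilon}2^{-|j-k|\varepsilon}$ for every $j\ge k-N$ (trivially when $j\ge k$, and because $|j-k|\le N$ otherwise), whence pointwise
\begin{align*}
\Bigl(\sum_{k\in\Z}\widetilde g_k(x)^q\Bigr)^{1/q}
&\le 2^{(2N+1)\varepsilon}\Bigl(\sum_{k\in\Z}\Bigl(\sum_{j\in\Z}2^{-|j-k|\varepsilon}g_j(x)\Bigr)^q\Bigr)^{1/q}\\
&\le C(N,q,\varepsilon)\Bigl(\sum_{j\in\Z}g_j(x)^q\Bigr)^{1/q},
\end{align*}
the last step being the discrete Young-type inequality of Lemma~\ref{summing lemma} with $a=2^{\varepsilon}>1$ and $b=q$. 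Raising to the $p$-th power and integrating over $\Omega$ gives the required bound.

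All of the estimates above are elementary; the only point that takes a little care is the bookkeeping with dyadic scales in the second inequality—determining precisely which indices $j$ the cutoff $\mathbf{1}_{[2^{-j-N},\infty)}$ leaves active on the annulus $2^{-k-1}\le d(x,y)<2^{-k}$, and then disposing of the finitely many near-diagonal indices $k-N\le j\le k$, where the one-sided geometric weight does not decay, by absorbing them into the symmetric kernel $2^{-|j-k|\varepsilon}$ so that Lemma~\ref{summing lemma} applies.
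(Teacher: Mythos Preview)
Your proof is correct and follows essentially the same route as the paper: you show the inclusion $\mathbb{D}^\beta_\Omega(u)\subset\mathbb{D}^{\beta,\varepsilon,N}_\Omega(u)$ for the first inequality, and for the second you construct the very same sequence (the paper writes $h_k=\sum_{j\ge k-N}2^{(k-j+1)\varepsilon}g_j$, identical to your $\widetilde g_k$) and control its $\ell^q$-norm via Lemma~\ref{summing lemma}. The only cosmetic difference is in the reduction to that lemma: the paper performs an index shift $j\mapsto j+N$ before applying it, whereas you absorb the near-diagonal indices directly via the bound $2^{-(j-k)\varepsilon}\le 2^{2N\varepsilon}2^{-|j-k|\varepsilon}$; both lead to the same constant dependence $C(N,q,\varepsilon)$.
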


\begin{proof}
	Let ${g}\in\mathbb{D}^\beta_{\Om}(u)$ and let $E\subset \Omega$
	be the associated exceptional set of measure zero as in Definition \ref{d.fhg}. 
	Fix $x,y\in \Omega\setminus E$.
	Then we have for any $k\in\Z$,
	\[
	|u(x)-u(y)|\mathbf{1}_{[2^{-k-1},2^{-k})}(d(x,y))\leq d(x,y)^\beta \left(g_k(x)+g_k(y)\right) \mathbf{1}_{[2^{-k-1},2^{-k})}(d(x,y)).
	\]
	Taking sum, we have
	\begin{align*}
	|u(x)-u(y)|
	&=  \sum_{k\in\Z}  |u(x)-u(y)|\mathbf{1}_{[2^{-k-1},2^{-k})}(d(x,y))\\
	&\le  \sum_{k\in\Z} d(x,y)^\beta \left(g_k(x)+g_k(y)\right) \mathbf{1}_{[2^{-k-1},2^{-k})}(d(x,y))\\
	&\le   d(x,y)^{\beta-\varepsilon} \sum_{k\in\Z} 2^{-k\varepsilon} \left(g_k(x)+g_k(y)\right) \mathbf{1}_{[2^{-k-N},\infty)}(d(x,y)).
	\end{align*}
	Hence, we have ${g}\in{\mathbb{D}}^{\beta,\varepsilon,N}_{\Om}(u)$, so that $\mathbb{D}^\beta_{\Om}(u)\subset {\mathbb{D}}^{\beta,\varepsilon,N}_{\Om}(u)$. Consequently, we have 
	\[
	\inf_{{g}\in{\mathbb{D}}^{\beta,\varepsilon,N}_{\Om}(u)}\|g\|_{L^p(\Om,\ell^q)}
	\leq \inf_{{g}\in\mathbb{D}^\beta_{\Om}(u)}\|g\|_{L^p(\Om,\ell^q)}\,.
	\]
	
	Now we focus on the second inequality. 
	Let ${g}\in{\mathbb{D}}^{\beta,\varepsilon,N}_{\Om}(u)$
	 and let $E\subset \Omega$
	be the associated exceptional set of measure zero as in Definition \ref{d.fhgtil}. 
	 Define, for $k\in\Z$, 
	$
	h_k=\sum_{j=k-N}^\infty 2^{(k-j+1)\varepsilon}g_j.
	$
	Now, if $x,y\in \Omega\setminus E$ and $2^{-k-1}\le d(x,y)<2^{-k}$, we have 
	\begin{align*}
	|u(x)-u(y)|
	&\le d(x,y)^{\beta-\varepsilon}\sum_{j\in\Z} 2^{-j\varepsilon}\left(g_j(x)+g_j(y)\right)\mathbf{1}_{[2^{-j-N},\infty)}(d(x,y))\\
	&\le d(x,y)^{\beta-\varepsilon}\sum_{j=k-N}^\infty 2^{-j\varepsilon}\left(g_j(x)+g_j(y)\right)\\
	&\le d(x,y)^\beta\sum_{j=k-N}^\infty 2^{(k-j+1)\varepsilon}\left(g_j(x)+g_j(y)\right)= d(x,y)^\beta\left(h_k(x)+h_k(y)\right)\,.
	\end{align*}
	We have shown that ${h}=(h_k)_{k\in\Z}\in\mathbb{D}^\beta_{\Om}(u)$. By Lemma \ref{summing lemma} with $a=2^\varepsilon>1$ and $b=q$,
	\begin{align*}
	\|h\|_{\ell^q}^q
	&= \sum_{k\in\Z}\left(\sum_{j=k-N}^\infty 2^{(k-j+1)\varepsilon} g_j\right)^q\\
	&= 2^{(N+1)q\varepsilon}\sum_{k\in\Z}\left(\sum_{j=k-N}^\infty 2^{(k-(j+N))\varepsilon} g_{j+N-N}\right)^q\\
		&= 2^{(N+1)q\varepsilon}\sum_{k\in\Z}\left(\sum_{j=k}^\infty 2^{(k-j)\varepsilon} g_{j-N}\right)^q
	\le C(N,q,\varepsilon)\sum_{j\in\Z} g_{j-N}^q=C(N,q,\varepsilon)\lVert g\rVert_{\ell^q}^q.
	\end{align*}
	Hence 
	\[
	 \inf_{{H}\in\mathbb{D}^\beta_{\Om}(u)}\|H\|_{L^p(\Omega, \ell^q)}\le 
	\|{h}\|_{L^p(\Om,\ell^q)}
	\leq C(N,q,\varepsilon)\|{g}\|_{L^p(\Om,\ell^q)}\]
	for all ${g}\in{\mathbb{D}}^{\beta,\varepsilon,N}_{\Om}(u)$,
	 from which the second inequality follows.
\end{proof}

The following lemma provides
a local counterpart of the first inequality in \eqref{e.besov}, extended to all
values of the parameter $1\le q<\infty$.
The proof follows  \cite[Proposition 4.1]{MR2764899}
and we also make use of the Fefferman--Stein maximal function inequality \eqref{e.fs}.

\begin{lemma}\label{l.weaker}
Let $0<\beta< 1$ and  $1<p,q<\infty$.
Assume that  the measure $\mu$ satisfies the  
reverse doubling condition \eqref{e.rev_dbl_decay}, with constants
$\kappa=1/2$ and  $0<c_R<1$.
	Let  $x_0\in X$ and   $0<r<(1/64)\diam(X)$.
	Then  there exists a constant $C=C(c_\mu,c_R,p,q,\beta)>0$  such that
	\[
	\lvert u\rvert_{\dot{M}^{\beta}_{p,q}(B(x_0,r))}^p\leq C\int_{B(x_0,73r)}\left(\int_{B(x_0,73r)}\frac{|u(x)-u(y)|^q}{d(x,y)^{\beta q}\mu(B(x,d(x,y)))}\,d\mu(y)\right)^{\frac{p}{q}}\,d\mu(x)
	\]
	for all $u\in L^1(B(x_0,73r))$. 
\end{lemma}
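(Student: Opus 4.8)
The plan is to reduce, through the two-sided comparison in Lemma~\ref{lemma:varient_seminorm}, to producing a single sequence $g=(g_l)_{l\in\Z}\in\mathbb{D}^{\beta,\varepsilon,N}_{B(x_0,r)}(u)$ for convenient $\varepsilon\in(0,\beta)$ and $N\in\N$ whose norm $\|g\|_{L^p(B(x_0,r);\ell^q)}$ is bounded by a constant times the $p$-th root of the right-hand side. We may assume the right-hand side is finite; replacing $u$ by its precise representative, we may assume that $\mu$-almost every point of $B(x_0,73r)$ is a Lebesgue point of $u$, and we let $E\subset B(x_0,r)$ be the null set of non-Lebesgue points, which will serve as the exceptional set in Definition~\ref{d.fhgtil}.

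\emph{Telescoping.} Fix Lebesgue points $x,y\in B(x_0,r)\setminus E$ and $k\in\Z$ with $2^{-k-1}\le d(x,y)<2^{-k}$. Since $d(x,y)<2r$ we get $2^{-k}<4r$, so all balls $B(x,2^{-m})$ with $m\ge k-1$ and $B(y,2^{-m})$ with $m\ge k-2$ lie in $B(x_0,17r)\subset B(x_0,73r)$. Writing $b_l(z)=\vint_{B(z,2^{-l})}|u(z)-u(w)|\,d\mu(w)$, telescoping $u(x)-u_{B(x,2^{-k+1})}=\sum_{m\ge k-1}(u_{B(x,2^{-m-1})}-u_{B(x,2^{-m})})$ (the tail tends to $0$ since $x$ is a Lebesgue point), using doubling together with the bound $\vint_B|u(w)-u_B|\,d\mu(w)\le 2\vint_B|u(w)-u(z)|\,d\mu(w)$ for $z$ the center of $B$, and handling the cross term via $B(x,2^{-k+1})\subset B(y,2^{-k+2})$ and doubling, one obtains
\[
|u(x)-u(y)|\le C(c_\mu)\sum_{l\ge k-2}\bigl(b_l(x)+b_l(y)\bigr).
\]

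\emph{The gradient and its norm.} Put $g_l(z)=A\,2^{l\beta}b_l(z)$ for indices with $2^{-l}\le 16r$ and $g_l\equiv 0$ otherwise, where $A$ is a large enough multiple of $C(c_\mu)$. For the indices $l\ge k-2$ above one has $2^{-l}\le 16r$, $\mathbf 1_{[2^{-l-N},\infty)}(d(x,y))=1$ whenever $N\ge 3$, and, since $2^{-l-3}\le d(x,y)$, also $C(c_\mu)A^{-1}2^{-l\beta}\le d(x,y)^{\beta-\varepsilon}2^{-l\varepsilon}$; hence the telescoping estimate shows $g\in\mathbb{D}^{\beta,\varepsilon,N}_{B(x_0,r)}(u)$ with $N=3$ and $\varepsilon=\beta/2$. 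To bound the norm, fix a Lebesgue point $x\in B(x_0,r)$, apply Jensen's inequality ($q\ge 1$), split each $B(x,2^{-l})$ into the dyadic annuli $A_i(x)=\{w:2^{-i-1}\le d(x,w)<2^{-i}\}$ with $i\ge l$, use $\mu(B(x,d(x,w)))\le \mu(B(x,2^{-l}))$ and $2^{l\beta q}\le 2^{(l-i)\beta q}d(x,w)^{-\beta q}$ on $A_i(x)$, and sum the geometric series in $l$; since $\bigcup_i A_i(x)\subset B(x_0,73r)$ this yields $\sum_l g_l(x)^q\le C(c_\mu,\beta,q)\,G_{u,\beta,q,B(x_0,73r)}(x)^q$. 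Raising to the power $p/q$, integrating over $B(x_0,r)\subset B(x_0,73r)$, and applying Lemma~\ref{lemma:varient_seminorm} gives the asserted bound.

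\emph{Main obstacle.} The one genuinely delicate point is the geometric bookkeeping in the telescoping step that keeps every auxiliary ball inside $B(x_0,73r)$ and thereby fixes the dilation constant; this is also where the restriction $r<\diam(X)/64$ and the reverse doubling hypothesis enter. If one prefers to avoid Lemma~\ref{lemma:varient_seminorm} and instead builds a genuine $\mathbb{D}^\beta$-gradient of $u$ directly out of maximal functions of the local oscillations $b_l$, then controlling the resulting $L^p(\ell^q)$-norm requires the vector-valued Fefferman--Stein inequality~\eqref{e.fs}, exactly as in \cite[Proposition~4.1]{MR2764899}.
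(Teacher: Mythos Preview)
Your argument is correct, and it takes a genuinely different route from the paper.  The paper builds the fractional Haj{\l}asz gradient out of functions
\[
h_j(x)=\vint_{B(x,2^{-j})} g_j(z)\,d\mu(z)\,,\qquad
g_j(z)=\biggl(\int_{B(z,2^{-j+3})\setminus B(z,2^{-j})}\frac{|u(z)-u(w)|^q}{d(z,w)^{\beta q}\mu(B(z,d(z,w)))}\,d\mu(w)\biggr)^{1/q}\,,
\]
so that the supports in $w$ are dyadic annuli around $z$.  The annular structure forces two extra ingredients: the reverse doubling hypothesis (to guarantee that the annuli $B(x,2^{-j+2})\setminus B(x,2^{-j+1})$ carry a fixed fraction of the mass of the full ball, which is what the paper's telescoping via annular averages needs) and the Fefferman--Stein inequality \eqref{e.fs} (to pass from $h_j$, which is an average of $g_j$ over a variable ball, back to $g_j$ in the $L^p(\ell^q)$-norm).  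The payoff is that the final bound on $\sum_j g_j^q$ uses only the bounded overlap of the annuli $B(z,2^{-j+3})\setminus B(z,2^{-j})$.

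Your approach is more direct: the averages $b_l(z)=\vint_{B(z,2^{-l})}|u(z)-u(w)|\,d\mu(w)$ are already anchored at the evaluation point $z$, so no maximal function appears and Fefferman--Stein is not needed.  You then pay for the overlap of the full balls $B(x,2^{-l})$ with the geometric factor $2^{(l-i)\beta q}$ and sum the resulting series.  This is simpler and in fact avoids the reverse doubling hypothesis altogether: your telescoping uses only doubling, your cross term uses only inclusion $B(x,2^{-k+1})\subset B(y,2^{-k+2})$ and doubling, and the norm bound uses only doubling and the geometric series.  So your closing remark that ``this is also where $\ldots$ the reverse doubling hypothesis enter'' is inaccurate for your own proof; neither that hypothesis nor the restriction $r<(1/64)\diam(X)$ is actually used anywhere in your argument, and your auxiliary balls all sit in $B(x_0,17r)$ rather than $B(x_0,73r)$.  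You have therefore proved a slightly stronger statement than the lemma as written.
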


\begin{proof}
We denote $B=B(x_0,r)$.
	Define, for every $j\in\mathbb{Z}$, the function $h_j\colon X\to [0,\infty]$, 
	\[
	h_j(x)= \vint_{B(x,2^{-j})}  g_j(z)\, d\mu(z)\,,\qquad x\in X\,,
	\]
	where,  for all $z\in X$, we define 
	\[
	g_j(z)= \mathbf{1}_{73 B}(z)\left(\int_{B(z,2^{-j+3})\setminus B(z,2^{-j})}\mathbf{1}_{73 B}(w)  \frac{|u(z)-u(w)|^q}{d(z,w)^{\beta q}\mu(B(z,d(z,w)))}\,d\mu(w)\right)^\frac{1}{q}\,.
	\]
	Fix $m\in\Z$ such that $2^{-m-1}\le 2 r<2^{-m}$.
  Fix Lebesgue points $x,y\in B$ of $u$, $x\not=y$, and choose $k\in\Z$ such that  $2^{-k-1}\leq d(x,y)<2^{-k}$. We observe that 
	  $k\ge m$. 

Let  $j\ge k-1$.
Then  $2^{-j+2}<\diam(X)/2$, and  
the assumed reverse doubling condition \eqref{e.rev_dbl_decay}, with constants $\kappa=1/2$ and $0<c_R<1$, implies
$\mu(B(x,2^{-j+1}))\le c_R\,\mu(B(x,2^{-j+2}))$.  Hence, we obtain
\begin{align*}
\mu(B(x,2^{-j+2})\setminus B(x,2^{-j+1}))\ge (1-c_R)\mu(B(x,2^{-j+2}))\,.
\end{align*}
	 For every $z\in B(x,2^{-j})$, we have $B(z,2^{-j+3})\subset 73B$ and 
	 \[
	B(x,2^{-j+2})\setminus B(x,2^{-j+1})\subset B(z,2^{-j+3})\setminus B(z,2^{-j})\,.
	\]
 By using the above estimates, we get 
\begin{align*}
\vint_{B(x,2^{-j})} &|u(z)-u_{B(x,2^{-j+2})\setminus B(x,2^{-j+1})}|\,	d\mu(z)\\
		&\leq \vint_{B(x,2^{-j})}\vint_{B(x,2^{-j+2})\setminus B(x,2^{-j+1})}|u(z)-u(w)|\, d\mu(w)\, d\mu(z)\\
		&\leq \vint_{B(x,2^{-j})}\left(\vint_{B(x,2^{-j+2})\setminus B(x,2^{-j+1})}|u(z)-u(w)|^q\,d\mu(w)\right)^\frac{1}{q}\,d\mu(z)\\
		&\leq C 2^{-j\beta}\vint_{B(x,2^{-j})}\left(\int_{B(x,2^{-j+2})\setminus B(x,2^{-j+1})}\frac{|u(z)-u(w)|^q}{d(z,w)^{\beta q}\mu(B(z,d(z,w)))}\,d\mu(w)\right)^\frac{1}{q}\, \,d\mu(z)\\
			&\leq C 2^{-j\beta}\vint_{B(x,2^{-j})}\left(\int_{B(z,2^{-j+3})\setminus B(z,2^{-j})}\frac{|u(z)-u(w)|^q}{d(z,w)^{\beta q}\mu(B(z,d(z,w)))}\,d\mu(w)\right)^\frac{1}{q}\, \,d\mu(z)\\
		&\leq C 2^{-j\beta}h_j(x)\,,
\end{align*}
where $C=C(c_\mu,c_R)$.
Hence, for each $j\ge k-1$, we obtain
\begin{equation}\label{eq-4}
\vint_{B(x,2^{-j})} |u(z)-u_{B(x,2^{-j+2})\setminus B(x,2^{-j+1})}|\,d\mu(z)\leq C(c_\mu,c_R) 2^{-j\beta}h_j(x)\,.
\end{equation}
	Now, using \eqref{eq-4}, we get
	\begin{equation}\label{eq-2}
	\begin{split}
		\sum_{j=k}^\infty &|u_{B(x,2^{-j})}-u_{B(x,2^{-j-1})}|\\
		 &\leq \sum_{j=k}^\infty \left(|u_{B(x,2^{-j})}-u_{B(x,2^{-j+2})\setminus B(x,2^{-j+1})}|
		+ |u_{B(x,2^{-j+2})\setminus B(x,2^{-j+1})}-u_{B(x,2^{-j-1})}|\right)\\
		 &\leq \sum_{j=k}^\infty \vint_{B(x,2^{-j})}|u(z)-u_{B(x,2^{-j+2})\setminus B(x,2^{-j+1})}|\,	d\mu(z)
		  \\&\qquad\qquad\qquad + \sum_{j=k}^\infty\vint_{B(x,2^{-j-1})}|u(z)-u_{B(x,2^{-j+2})\setminus B(x,2^{-j+1})}|	\, d\mu(z)\\
		 &\leq C(c_\mu)\sum_{j=k}^\infty\vint_{B(x,2^{-j})}|u(z)-u_{B(x,2^{-j+2})\setminus B(x,2^{-j+1})}|\,	d\mu(z)
		\leq C(c_\mu,c_R) \sum_{j=k}^\infty2^{-j\beta}h_j(x)
		\end{split}
	\end{equation}
	and similarly
	\begin{equation}\label{eq-3}
		\sum_{j=k+1}^\infty|u_{B(y,2^{-j})}-u_{B(y,2^{-j-1})}|
		\leq C(c_\mu,c_R) \sum_{j=k+1}^\infty2^{-j\beta}h_j(y).
	\end{equation}

Observe that $B(x,2^{-k})\subset B(y,2^{-k+1})\subset B(x,2^{-k+2})$ by the choice of $k$. 
Using  \eqref{eq-4} with $j=k-1$
and $y$ instead of $x$, we obtain
	\begin{equation}\label{eq-5}
	\begin{split}
		 |u_{B(x,2^{-k})}-u_{B(y,2^{-k-1})}|
		&\le  \vint_{B(x,2^{-k})}|u(z)-u_{B(y,2^{-k-1})}|\,d\mu(z)\\
	& \le C(c_\mu)\vint_{B(y,2^{-k+1})}|u(z)-u_{B(y,2^{-k-1})}|\,d\mu(z)\\
	& \leq C(c_\mu)\vint_{B(y,2^{-k+1})}|u(z)-u_{B(y,2^{-k+3})\setminus B(y,2^{-k+2})}|\,d\mu(z)\\
	&\qquad\qquad + C(c_\mu)|u_{B(y,2^{-k+3})\setminus B(y,2^{-k+2})}-u_{B(y,2^{-k-1})}|\\
	& \leq C(c_\mu)\vint_{B(y,2^{-k+1})}|u(z)-u_{B(y,2^{-k+3})\setminus B(y,2^{-k+2})}|\,d\mu(z)\\
	&\qquad\qquad + C(c_\mu)\vint_{B(y,2^{-k-1})}|u(z)-u_{B(y,2^{-k+3})\setminus B(y,2^{-k+2})}|\,d\mu(z)\\
&\leq C(c_\mu)\vint_{B(y,2^{-k+1})}|u(z)-u_{B(y,2^{-k+3})\setminus B(y,2^{-k+2})}|\,d\mu(z)\\
&\leq C(c_\mu,c_R) 2^{-(k-1)\beta}h_{k-1}(y).
\end{split}
\end{equation}
 Recall that $x$ and $y$ are both Lebesgue points of $u$. A chaining argument combined with inequalities \eqref{eq-2}, \eqref{eq-3} and \eqref{eq-5} gives 
	\begin{align*}
		|u(x)-u(y)|
		&\leq |u_{B(x,2^{-k})}-u_{B(y,2^{-k-1})}| \\&\qquad \qquad + \sum_{j=k}^\infty|u_{B(x,2^{-j})}-u_{B(x,2^{-j-1})}|
		+ \sum_{j=k+1}^\infty|u_{B(y,2^{-j})}-u_{B(y,2^{-j-1})}|\\
		&\leq C(c_\mu,c_R)\sum_{j=k-1}^\infty2^{-j\beta}(h_j(x)+h_j(y))\\
				&\leq C(c_\mu,c_R)\sum_{j\in\Z} 2^{-j\beta}(h_j(x)+h_j(y))\mathbf{1}_{[2^{-j-2},\infty)}(d(x,y))\,.
	\end{align*}
	That is $C(c_\mu,c_R)h=(C(c_\mu,c_R)h_j)_{j\in\Z}\in{\mathbb{D}}^{\beta,\beta,2}_{B}(u)$. Consequently Lemma \ref{lemma:varient_seminorm} gives
	\begin{equation}\label{eq-1}
		\lvert u\rvert_{\dot{M}^\beta_{p,q}(B)}^p
		\le  C(c_\mu,c_R,p,q,\beta)\|h\|_{L^{p}(B,\ell^{q})}^p
		\leq C(c_\mu,c_R,p,q,\beta)\int_{X}\left(\sum_{j\in\mathbb{Z}}h_j(x)^q\right)^\frac{p}{q}d\mu(x).
	\end{equation}
Observe that, for every $j\in\Z$,
	\[
	h_j(x)^q
	= \left(\vint_{B(x,2^{-j})}g_j(z)\,d\mu(z)\right)^q
	\leq Mg_j(x)^q.
	\]
	Hence \eqref{eq-1} and the Fefferman--Stein maximal 
	function inequality \eqref{e.fs} give
	\begin{align*}
		&\lvert u\rvert_{\dot{M}^\beta_{p,q}(B)}^p
		\leq
		C\left\|(Mg_j)_{j\in\Z}\right\|_{L^p\left(X,\ell^q\right)}^p
		\leq C\left\|(g_j)_{j\in\Z}\right\|_{L^p\left(X,\ell^q\right)}^p\\
		&= C \int_{73B}\left(\sum_{j\in\mathbb{Z}}\int_{B(z,2^{-j+3})\setminus B(z,2^{-j})} \mathbf{1}_{73B}(w)\frac{|u(z)-u(w)|^q}{d(z,w)^{\beta q}\mu(B(z,d(x,w)))}d\mu(w)\right)^\frac{p}{q}\,d\mu(z)\\
		&\le  C \int_{73B}\left(\int_{73B}\frac{|u(z)-u(w)|^q}{d(z,w)^{\beta q}\mu(B(z,d(x,w)))}\,d\mu(w)\right)^\frac{p}{q}\,d\mu(z)\,,
	\end{align*}
 where $C$ denotes a constant
that depends on the parameters $\beta$, $p$, $q$, $c_R$ and $c_\mu$ only. 
The desired inequality follows.
\end{proof}
\section{Comparison of relative capacities}\label{s.comparison_capacities}

In this section, we first define
a relative Haj{\l}asz--Triebel--Lizorkin capacity  by following \cite{LMV}, see Definition \ref{d.capacity}.
The seminorm comparison in Section \ref{s.semi} immediately yields 
a two-sided comparison between the relative fractional 
and Haj{\l}asz--Triebel--Lizorkin capacities, we refer to Theorem~\ref{l.second_side}.
As an application, we obtain
a $q$-monotonicity property of fractional capacities, see the second inequality in \eqref{e.cap_E_comp} and Theorem \ref{l.q_monotonicity}.
Under more restrictive assumptions, we also establish a deeper $q$-independence property
of fractional capacities, see
 Theorem \ref{l.q_independence}. This follows, via
 comparison of capacities, from the corresponding $q$-independence results in \cite{LMV}
 for relative Haj{\l}asz--Triebel--Lizorkin capacities.
 
\begin{definition}\label{d.capacity}
Let $1< p<\infty$, $1\le q\le \infty$, $0<\beta < 1$ and $\Lambda\ge  2$. 
Let $B\subset X$ be a ball and let $E\subset \iol{B}$ be a closed  set. Then we write
\[
\mathrm{cap}_{\dot{M}^{\beta}_{p,q}} (E,2B,\Lambda B) = \inf_\varphi \lvert \varphi\rvert_{\dot{M}^{\beta}_{p,q}(\Lambda B)}^p\,,
\]
where the infimum is taken over all continuous functions 
$\varphi\colon X\to \R$ such that $\varphi(x)\ge 1$ for every $x\in E$ and 
$\varphi(x)=0$ for every $x\in X\setminus 2B$.
\end{definition}

Global variants Haj{\l}asz--Triebel--Lizorkin capacities 
first appeared in~\cite{MR3605979} and 
they have been  studied, for instance, in~\cite{MR4104350}, \cite{KM} and \cite{math9212724}.
See also~\cite{MR1411441} and the references therein for  
the theory of Triebel--Lizorkin capacities in the Euclidean case.
 If $F\subset E$ are closed subsets of $\iol{B}$ and $1\le \hat q\le q\le \infty$, then
\begin{equation}\label{e.cap_E_comp}
\mathrm{cap}_{\dot{M}^{\beta}_{p,q}} (F,2B,\Lambda B)\le \mathrm{cap}_{\dot{M}^{\beta}_{p,q}} (E,2B,\Lambda B) \le \mathrm{cap}_{\dot{M}^{\beta}_{p,\hat q}} (E,2B,\Lambda B)\,.
\end{equation}

\begin{remark}\label{r.tfremark}
If $\varphi$ is a test function for $\mathrm{cap}_{\dot{M}^{\beta}_{p,q}} (E,2B,\Lambda B)$, then $v=\max\{0,\min\{\varphi,1\}\}$ is also a test function for the same relative capacity. Moreover,
\[\lvert v(x)-v(y)\rvert
\le \lvert \varphi(x)-\varphi(y)\rvert
\]
for all $x,y\in X$. 
It follows that $\lvert v\rvert_{\dot{M}^{\beta}_{p,q}(\Lambda B)}\le \lvert \varphi\rvert_{\dot{M}^{\beta}_{p,q}(\Lambda B)}$.
Hence, the infimum in Definition \ref{d.capacity} can be taken
either over {\em all continuous and bounded functions} or 
over {\em all continuous functions that are integrable on balls}, in both cases such that $\varphi(x)\ge 1$ for every $x\in E$ and 
$\varphi(x)=0$ for every $x\in X\setminus 2B$.
\end{remark}

The following theorem is nothing more than a capacitary counterpart
of the two-sided seminorm comparison proved in Section \ref{s.semi}.

\begin{theorem}\label{l.second_side}
Let $0<\beta < 1$, $1< p,q<\infty$ and $\Lambda\ge 2$.
Let $B=B(x_0,r)\subset X$ be a ball  and let $E\subset \iol{B}$ be a closed set. 
\begin{itemize}
\item[(i)]
Assume that $\mu$ satisfies the quantitative doubling condition~\eqref{e.doubling_quant}, with constants $Q>0$ and  $c_Q>0$. If $\beta >Q(\frac{1}{p}-\frac{1}{q})$, then
 there exists a constant $C=C(\beta,p,q,c_\mu,c_Q,Q)>0$ such that
\[
\cp_{\beta,p,q} (E,2B,\Lambda B)
\le C\cp_{\dot{M}^{\beta}_{p,q}} (E,2B,\Lambda_1 B)\,,
\]
where $\Lambda_1=9\Lambda$.

\item[(ii)]
Assume that  the measure $\mu$ satisfies the  
reverse doubling condition \eqref{e.rev_dbl_decay},  with constants
$\kappa=1/2$ and  $0<c_R<1$. 
If $\Lambda r < (1/64)\diam(X)$, then there exists a constant $C=C(c_\mu,c_R,p,q,\beta)>0$ such that
\[
\cp_{\dot{M}^{\beta}_{p,q}} (E,2B,\Lambda B)\le
C\cp_{\beta,p,q} (E,2B,\Lambda_2 B)\,,
\]
where $\Lambda_2=73\Lambda$.
\end{itemize}
\end{theorem}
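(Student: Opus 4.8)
The plan is to deduce both inequalities directly from the seminorm comparisons of Section~\ref{s.semi}, namely Lemma~\ref{l.stronger} for part~(i) and Lemma~\ref{l.weaker} for part~(ii), by transporting test functions between the two capacities. In each case the only genuine work is to match the radii of the enclosing balls and to verify that the test functions are admissible inputs for the relevant seminorm lemma; no new analysis is needed, since all the hard estimates were already carried out in Section~\ref{s.semi}.

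For part~(i), fix a test function $\varphi$ for $\cp_{\dot{M}^{\beta}_{p,q}}(E,2B,\Lambda_1 B)$. By Remark~\ref{r.tfremark} I may assume $0\le\varphi\le 1$; since $\varphi$ vanishes outside $2B$ it is then bounded, hence $\varphi\in L^1(B(x_0,9\Lambda r))$. The same $\varphi$ is admissible for $\cp_{\beta,p,q}(E,2B,\Lambda B)$ because it is continuous, $\ge 1$ on $E$, and $=0$ off $2B$, so by Definition~\ref{d.frac_capacity},
\[
\cp_{\beta,p,q}(E,2B,\Lambda B)\le \int_{\Lambda B}\biggl(\int_{\Lambda B}\frac{|\varphi(x)-\varphi(y)|^q}{d(x,y)^{\beta q}\mu(B(x,d(x,y)))}\,d\mu(y)\biggr)^{p/q}\,d\mu(x).
\]
Applying Lemma~\ref{l.stronger} with base point $x_0$ and radius $\Lambda r$, so that its ball $B(x_0,9r)$ becomes $B(x_0,9\Lambda r)=\Lambda_1 B$, bounds the right-hand side by $C\lvert\varphi\rvert_{\dot{M}^{\beta}_{p,q}(\Lambda_1 B)}^p$. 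Taking the infimum over all admissible $\varphi$ yields~(i).

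For part~(ii), fix a test function $\varphi$ for $\cp_{\beta,p,q}(E,2B,\Lambda_2 B)$, and by Remark~\ref{r.fremark} assume again $0\le\varphi\le 1$, so that $\varphi\in L^1(B(x_0,73\Lambda r))$. Then $\varphi$ is admissible for $\cp_{\dot{M}^{\beta}_{p,q}}(E,2B,\Lambda B)$, so by Definition~\ref{d.capacity} together with Lemma~\ref{l.weaker} applied with base point $x_0$ and radius $\Lambda r$ — legitimate since $\Lambda r<(1/64)\diam(X)$ by hypothesis and since $73\Lambda r=\Lambda_2 r$ —
\[
\cp_{\dot{M}^{\beta}_{p,q}}(E,2B,\Lambda B)\le \lvert\varphi\rvert_{\dot{M}^{\beta}_{p,q}(\Lambda B)}^p\le C\int_{\Lambda_2 B}\biggl(\int_{\Lambda_2 B}\frac{|\varphi(x)-\varphi(y)|^q}{d(x,y)^{\beta q}\mu(B(x,d(x,y)))}\,d\mu(y)\biggr)^{p/q}\,d\mu(x).
\]
Taking the infimum over all admissible $\varphi$ yields~(ii). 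There is no serious obstacle here: the statement is a corollary of Section~\ref{s.semi}, and the only points requiring care are the reduction to $0\le\varphi\le 1$ — needed both for $L^1$-membership and because Lemmas~\ref{l.stronger} and~\ref{l.weaker} are stated for $L^1$ functions — and the correct dilation of the radius when invoking those lemmas, which is precisely what produces the factors $9$ and $73$ in $\Lambda_1$ and $\Lambda_2$.
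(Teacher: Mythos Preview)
Your proof is correct and follows exactly the approach indicated in the paper, which simply cites Lemmas~\ref{l.stronger} and~\ref{l.weaker} together with Remarks~\ref{r.fremark} and~\ref{r.tfremark}. You have faithfully unpacked those references: the reduction to $0\le\varphi\le 1$ via the remarks ensures $L^1$-membership, and applying the seminorm lemmas at radius $\Lambda r$ produces precisely the dilation factors $9$ and $73$.
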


\begin{proof}
This follows Lemma \ref{l.stronger} and Lemma \ref{l.weaker}, taking Remarks
\ref{r.fremark} and \ref{r.tfremark} into account.
\end{proof}

The Haj{\l}asz--Triebel--Lizorkin capacity, see Definition \ref{d.capacity}, is monotone in the $q$-parameter, see the second inequality in \eqref{e.cap_E_comp}.
By using the capacity comparison from Theorem \ref{l.second_side}, we now recover a similar property for
the fractional relative capacity.

\begin{theorem}\label{l.q_monotonicity}
Assume that the measure $\mu$ satisfies the quantitative doubling condition~\eqref{e.doubling_quant}, with exponent $Q>0$ and constant $c_Q>0$. 
 Let $\Lambda\ge 2$, $0<\beta< 1$,
$1<p<\infty$ and $1<\hat q\le q<\infty$  be such that $\beta > Q(\frac{1}{p}-\frac{1}{q})$.
Assume that the measure $\mu$ satisfies the  
reverse doubling condition~\eqref{e.rev_dbl_decay},  with constants
$\kappa=1/2$ and  $0<c_R<1$. Let $B=B(x_0,r)\subset X$ be a ball
such that $9\Lambda r<(1/64)\diam(X)$ and let $E\subset \iol{B}$ be a closed  set. 
Then there exists a constant 
$C=C(Q,c_Q,c_R,p,q,\hat q,\beta,c_\mu)>0$ such that
\[
\cp_{\beta,p,q} (E,2B,\Lambda B)
\le C\cp_{\beta,p,\hat q} (E,2B,\Lambda_3 B)\,.
\]
where $\Lambda_3=73\cdot 9 \Lambda$.
\end{theorem}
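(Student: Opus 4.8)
The plan is to deduce the inequality by chaining the two halves of the capacity comparison in Theorem~\ref{l.second_side} with the elementary $q$-monotonicity of the Haj{\l}asz--Triebel--Lizorkin capacity recorded as the second inequality in~\eqref{e.cap_E_comp}. No new estimate is needed; the whole point is to check that the parameter restrictions and admissible scales line up, and to keep track of the dilation factors, while the inner ball $B$ (with $E\subset\overline B$) is never changed, only the ambient dilation.

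Concretely, first I would invoke Theorem~\ref{l.second_side}(i) with the parameters $(\beta,p,q)$: the quantitative doubling condition~\eqref{e.doubling_quant} holds with exponent $Q$ and by hypothesis $\beta>Q(\tfrac1p-\tfrac1q)$, so there is a constant $C=C(\beta,p,q,c_\mu,c_Q,Q)>0$ with
\[
\cp_{\beta,p,q}(E,2B,\Lambda B)\le C\,\cp_{\dot{M}^{\beta}_{p,q}}(E,2B,9\Lambda B)\,.
\]
Next, since $1<\hat q\le q<\infty$, the bound $\lVert\cdot\rVert_{\ell^{q}}\le\lVert\cdot\rVert_{\ell^{\hat q}}$ gives $\lvert\varphi\rvert_{\dot{M}^{\beta}_{p,q}(9\Lambda B)}\le\lvert\varphi\rvert_{\dot{M}^{\beta}_{p,\hat q}(9\Lambda B)}$ for every admissible test function, hence, by~\eqref{e.cap_E_comp} applied with the ball $9\Lambda B$,
\[
\cp_{\dot{M}^{\beta}_{p,q}}(E,2B,9\Lambda B)\le \cp_{\dot{M}^{\beta}_{p,\hat q}}(E,2B,9\Lambda B)\,.
\]
Finally I would apply Theorem~\ref{l.second_side}(ii) with $\hat q$ in place of $q$ and with the dilation constant $9\Lambda$ in place of $\Lambda$: the reverse doubling condition~\eqref{e.rev_dbl_decay} holds with $\kappa=1/2$, one has $1<\hat q<\infty$ and $9\Lambda\ge 2$, and the required scale bound reads $(9\Lambda)r<(1/64)\diam(X)$, which is exactly the hypothesis $9\Lambda r<(1/64)\diam(X)$. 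This yields
\[
\cp_{\dot{M}^{\beta}_{p,\hat q}}(E,2B,9\Lambda B)\le C\,\cp_{\beta,p,\hat q}(E,2B,73\cdot 9\Lambda B)=C\,\cp_{\beta,p,\hat q}(E,2B,\Lambda_3 B)
\]
with $C=C(c_\mu,c_R,p,\hat q,\beta)>0$. Concatenating the three displays gives the claim with $C=C(Q,c_Q,c_R,p,q,\hat q,\beta,c_\mu)>0$.

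Since each of the three steps is a direct citation, there is essentially no obstacle here beyond this bookkeeping. The only points deserving attention are that the parameter restriction $\beta>Q(\tfrac1p-\tfrac1q)$ is precisely what makes Theorem~\ref{l.second_side}(i) applicable at index $q$, and that the scale hypothesis $9\Lambda r<(1/64)\diam(X)$ is precisely what makes Theorem~\ref{l.second_side}(ii) applicable at dilation $9\Lambda$; with these in place the constants combine cleanly and no intermediate scale restriction is lost.
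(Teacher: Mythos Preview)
Your proof is correct and follows essentially the same approach as the paper: chain Theorem~\ref{l.second_side}(i) at index $q$, then the trivial $q$-monotonicity~\eqref{e.cap_E_comp} of the Haj{\l}asz--Triebel--Lizorkin capacity, then Theorem~\ref{l.second_side}(ii) at index $\hat q$ with dilation $9\Lambda$. Your bookkeeping of the scale restriction $9\Lambda r<(1/64)\diam(X)$ and the dilation factor $\Lambda_3=73\cdot 9\Lambda$ matches the paper's exactly.
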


\begin{proof}
Theorem \ref{l.second_side}(A) implies that
\begin{align*}
\cp_{\beta,p,q} (E,2B,\Lambda B)
&\le C\cp_{\dot{M}^{\beta}_{p,q}} (E,2B,\Lambda_1 B)\,,
\end{align*}
where $\Lambda_1=9\Lambda$.
On the other hand, the trivial inequality \eqref{e.cap_E_comp} shows that 
\begin{equation}\label{e.trivial}
\cp_{\dot{M}^{\beta}_{p,q}} (E,2B,\Lambda_1 B)\le  \cp_{\dot{M}^{\beta}_{p,\hat q}} (E,2B,\Lambda_1 B)\,,
\end{equation}
Finally, we apply Theorem \ref{l.second_side}(B), which shows that
\[
\cp_{\dot{M}^{\beta}_{p,\hat q}} (E,2B,\Lambda_1 B)\le C\cp_{\beta,p,\hat q} (E,2B, \Lambda_3 B)\,,
\]
where $\Lambda_3=73\cdot 9 \Lambda$.
Combining the above estimates concludes the proof.
\end{proof}

The fractional capacity is, in fact,
independent of the $q$ parameter, aside from the constants of comparison. 
See the statement of  Theorem \ref{l.q_independence} for a more precise formulation.

\begin{example}
For the purpose of illustration, consider a connected metric space $X$.
Let $\Lambda>2$, $1\le p<\infty$ and $0<\beta <1$.
If $1\le q,\hat q<\infty$,  we
observe from parts (i) and (ii) of Lemma \ref{l.frac_cap_balls} that
\begin{align*}
(1/C) \mathrm{cap}_{\beta,p,q} (\overline{B},2B,\Lambda B)
\le r^{-\beta p}\mu(B)\le C\mathrm{cap}_{\beta,p,\hat{q}} (\overline{B},2B,\Lambda B)\,,
\end{align*}
where $B=B(x,r)$ 
is a ball in $X$ such that $0<r<(1/8)\diam(X)$.
The constant $C>1$ is independent of the ball $B$,
and therefore we see that the two different capacities,
associated with the independent parameters $q$ and $\hat q$, are comparable for all closed balls in $X$.
\end{example}

We extend the $q$-independence result of fractional capacities on closed balls to more general sets  under suitable assumptions, see Theorem \ref{l.q_independence}.
This deep result is via Theorem \ref{l.second_side} based on the corresponding
$q$-independence property \eqref{e.nontrivial}  of the relative Haj{\l}asz--Triebel--Lizorkin capacity. This property follows from \cite[Theorem
6.5, Theorem 9.3 and Theorem 6.6]{LMV}; we also refer \cite[Proposition 4.4.4]{MR1411441} and  \cite{MR1097178} for the Euclidean case.
 Observe also that the constant $\Lambda_3$ in \eqref{e.ss} is independent of $\Lambda$.

\begin{theorem}\label{l.q_independence}
Assume that the measure $\mu$ satisfies the quantitative doubling condition~\eqref{e.doubling_quant}, with exponent $Q>0$ and constant $c_Q>0$. 
 Let $\Lambda\ge 2$, $0<\beta< 1$,
$1<p<\infty$ and $1<q,\hat q<\infty$  be such that $\beta > Q(\frac{1}{p}-\frac{1}{q})$.
Assume that $\mu$ satisfies the  
reverse doubling condition~\eqref{e.rev_dbl_decay},  with constants
$\kappa=1/2$ and  $0<c_R<1$, and 
the quantitative reverse doubling condition~\eqref{e.reverse_doubling}, for
 some exponent $\sigma>\beta p$ and constant $c_\sigma>0$.  Let $B=B(x_0,r)\subset X$ be a ball
such that $0<r<(41\cdot 64)^{-1}\diam(X)$ and let $E\subset \iol{B}$ be a compact set. 
Then there exists a constant 
$C=C(Q,c_Q,c_\sigma,c_R,p,q,\hat q,\sigma,\beta,c_\mu,\Lambda)>0$ such that
\begin{equation}\label{e.ss}
\cp_{\beta,p,q} (E,2B,\Lambda B)
\le C\cp_{\beta,p,\hat q} (E,2B,\Lambda_3 B)\,.
\end{equation}
where $\Lambda_3=73\cdot 41$.
\end{theorem}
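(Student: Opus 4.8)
The plan is to sandwich the fractional relative capacity between two relative Haj\l asz--Triebel--Lizorkin capacities, carrying the two fine indices $q$ and $\hat q$, and to bridge these by the $q$-independence of the Haj\l asz--Triebel--Lizorkin capacity established in \cite{LMV}. By Remarks \ref{r.fremark} and \ref{r.tfremark} the same class of bounded continuous test functions may be used for every capacity that occurs, so it suffices to produce a chain of estimates of the form
\[
\cp_{\beta,p,q}(E,2B,\Lambda B)\ \lesssim\ \cp_{\dot{M}^{\beta}_{p,q}}(E,2B,9\Lambda B)\ \lesssim\ \cp_{\dot{M}^{\beta}_{p,\hat q}}(E,2B,41B)\ \lesssim\ \cp_{\beta,p,\hat q}(E,2B,73\cdot 41\,B),
\]
whose composition is exactly \eqref{e.ss} with $\Lambda_3=73\cdot 41$.

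The first link will be Theorem \ref{l.second_side}(i) applied with dilation $\Lambda$; this is precisely where the hypothesis $\beta>Q(\tfrac1p-\tfrac1q)$ is consumed, and it produces a constant depending only on $\beta,p,q,c_\mu,c_Q,Q$. The third link will be Theorem \ref{l.second_side}(ii) applied with the index $\hat q$ and dilation $41$: since $0<r<(41\cdot 64)^{-1}\diam(X)$ we have $41r<(1/64)\diam(X)$, so the radius hypothesis of that part is met, and — importantly — no condition relating $\hat q$ to $Q$ and $\beta$ is needed for this direction, which is why $\hat q$ is unrestricted in the statement. Note that the output dilation $73\cdot 41$ of this last step is independent of $\Lambda$, as asserted in the remark preceding the theorem.

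The substantial step is the middle one: the relative Haj\l asz--Triebel--Lizorkin capacity should be, up to multiplicative constants and a controlled enlargement of the dilation, independent of the fine parameter. This is the $q$-independence property \eqref{e.nontrivial}, which follows from \cite[Theorem 6.5, Theorem 9.3 and Theorem 6.6]{LMV}. It is here that the reverse doubling condition \eqref{e.rev_dbl_decay} with $\kappa=1/2$, the quantitative reverse doubling condition \eqref{e.reverse_doubling} with exponent $\sigma>\beta p$, and the compactness of $E\subset\iol B$ are used, and here that the comparison constant acquires its dependence on $\sigma$, $c_\sigma$ and on $\Lambda$. Applying this property with input dilation $9\Lambda\ge 18$ (recall $\Lambda\ge 2$) and output dilation $41$ yields the middle link; chaining the three links completes the proof.

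The main obstacle I anticipate is organizational rather than analytic: one must quote the three results from \cite{LMV} with normalizations of the seminorm and of the dilation parameters that match Definition \ref{d.capacity} and Theorem \ref{l.second_side}, one must verify that the single radius restriction $0<r<(41\cdot 64)^{-1}\diam(X)$ is strong enough for every auxiliary result invoked along the chain — in particular for the \cite{LMV} results, whose hypotheses also involve $\diam(X)$ — and one must confirm that it is compactness of $E$, not merely closedness, that those results require. Once these normalizations are reconciled, no new estimate beyond Theorem \ref{l.second_side} and the cited results from \cite{LMV} is needed.
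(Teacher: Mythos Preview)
Your proposal is correct and follows essentially the same route as the paper's proof: the paper also argues by the three-link chain $\cp_{\beta,p,q}(E,2B,\Lambda B)\lesssim \cp_{\dot M^\beta_{p,q}}(E,2B,9\Lambda B)\lesssim \cp_{\dot M^\beta_{p,\hat q}}(E,2B,41B)\lesssim \cp_{\beta,p,\hat q}(E,2B,73\cdot 41\,B)$, invoking Theorem~\ref{l.second_side}(i), then \cite[Theorems 6.5, 9.3, 6.6]{LMV} for the nontrivial middle step~\eqref{e.nontrivial}, and finally Theorem~\ref{l.second_side}(ii). The paper phrases this as ``argue as in the proof of Theorem~\ref{l.q_monotonicity}, replacing~\eqref{e.trivial} by~\eqref{e.nontrivial},'' which is exactly your plan.
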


\begin{proof}
By  \cite[Theorem
6.5, Theorem 9.3 and Theorem 6.6]{LMV},
\begin{equation}\label{e.nontrivial}
\cp_{\dot{M}^{\beta}_{p,q}} (E,2B,\Lambda_1 B)\le C \cp_{\dot{M}^{\beta}_{p,\hat q}} (E,2B,41 B)\,,
\end{equation}
where $\Lambda_1=9\Lambda$.
Replacing the trivial inequality \eqref{e.trivial} by  \eqref{e.nontrivial}, we can otherwise argue as in the proof of Theorem \ref{l.q_monotonicity}, with obvious modifications.
\end{proof}

\section{Three ways of self-improvement}\label{s.three}

 In this section we prove  one of  our main result, Theorem \ref{t.main_impro}.
This result gives three ways of self-improvement for the fractional capacity density condition in a complete geodesic space: with respect to the parameters $\beta$, $p$ and $q$.
That is, if a closed set $E\subset X$ satisfies the fractional $(\beta,p,q)$-capacity density condition~\eqref{e.fractionalk_capacity_density_condition} with suitable restrictions on  $\beta$, $p$ and $q$, 
then there exists $\varepsilon>0$ such that
$E$ satisfies the fractional $(\hat \beta,\hat p,\hat q)$-capacity density condition~\eqref{e.fractionalk_capacity_density_condition}, if
\[
\lvert \beta-\hat \beta\rvert<\varepsilon\,,\qquad \lvert p-\hat p\rvert<\varepsilon\qquad\text{ and }\qquad 1\le \hat q<\infty\,.
\]

The proof of Theorem \ref{t.main_impro} is based on the analogous result for the  following Haj{\l}asz--Triebel--Lizorkin $(\beta,p,q)$-capacity density condition, proved in \cite{LMV}.
This result is taken  for granted in this paper, and 
our main result is established by creating a connection to it with the aid of the capacity
comparison in Section~\ref{s.comparison_capacities}. 
The basic estimates  in Section \ref{s.capacitary} are also useful here.

First, we need the following definition  from \cite[Definition 11.1]{LMV}.

\begin{definition}\label{d.HTL_cap_density}
 Let $1< p<\infty$, $1\le q\le\infty$, and $0<\beta<1$.
A closed set $E\subset X$ satisfies the {\em Haj{\l}asz--Triebel--Lizorkin $(\beta,p,q)$-capacity density condition}   
if there are constants  $c_0,c_1>0$ and $\Lambda>2$ such that
\begin{equation}\label{e.Hajlaszc capacity density condition}
\cp_{\dot{M}^\beta_{p,q}}(E\cap \overline{B(x,r)},B(x,2r),B(x,\Lambda r))\ge 
 c_0 \cp_{\dot{M}^\beta_{p,q}}(\overline{B(x,r)},B(x,2r),B(x,\Lambda r)) 
\end{equation}
for all $x\in E$ and all $0<r<c_1\diam(E)$.
\end{definition}

We also need a suitable version of the Hausdorff content density condition from \cite{MR3631460}; the following definition also coincides with 
 \cite[Definition 11.3]{LMV}.
 The reader is reminded to recall Definition \ref{d.hcc} at this stage.

\begin{definition}
A closed set $E\subset X$ satisfies the {\em Hausdorff content density condition of codimension $d\ge 0$} if  there is a constant $c_0>0$ such that
\begin{equation}\label{e.hausdorff_content_density}
\Ha^{\mu,d}_r(E\cap \overline{B(x,r)})\ge c_0 \: \Ha^{\mu,d}_r(\overline{B(x,r)})
\end{equation}
for   all   $x\in E$ and all $0<r<\diam(E)$.
\end{definition}

Next we show that a Hausdorff content
density condition,  for some $0 < d < \beta p$, implies the fractional $(\beta,p,q)$-capacity density condition.

\begin{theorem}\label{t.Hausdorff}
Assume that  the measure $\mu$ satisfies the  
reverse doubling condition \eqref{e.rev_dbl_decay}, with constants
$\kappa=1/2$ and  $0<c_R<1$. Let
$1\le p,q<\infty$ and $0<\beta< 1$.
Assume that a closed set $E\subset X$ 
 satisfies the Hausdorff content density condition \eqref{e.hausdorff_content_density} for some $0 < d < \beta p$.
Then  $E$ satisfies the fractional $(\beta,p,q)$-capacity density condition \eqref{e.fractionalk_capacity_density_condition}.
\end{theorem}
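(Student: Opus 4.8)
The plan is to deduce the fractional $(\beta,p,q)$-capacity density condition from the basic estimates of Section~\ref{s.capacitary}, with the Hausdorff content density condition serving as the bridge between the capacity of a ball and the capacity of its intersection with $E$. Throughout I fix $\Lambda=4$ and set $\eta=d/\beta$; then $0<\eta<p$ because $0<d<\beta p$, and $\beta\eta=d$. The choice $\Lambda=4$ is made so that the assumed reverse doubling with $\kappa=1/2$ is exactly the hypothesis $\kappa=2/\Lambda$ of Lemma~\ref{l.codim}, and so that the restriction $r<\diam(X)/(2\Lambda)=\diam(X)/8$ required there is implied by $r<\diam(E)/8$ (recall $\diam(E)\le\diam(X)$). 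Fix $x\in E$ and $0<r<\diam(E)/8$, write $B=B(x,r)$ and $F=E\cap\overline{B}$, and note $x\in F$. By Lemma~\ref{l.frac_cap_balls}(i),
\[
\cp_{\beta,p,q}(\overline{B},2B,\Lambda B)\le C\,r^{-\beta p}\mu(B),
\]
so, in view of Definition~\ref{d.cap_density}, it suffices to produce a constant $c>0$, independent of $x$ and $r$, with $\cp_{\beta,p,q}(F,2B,\Lambda B)\ge c\,r^{-\beta p}\mu(B)$.

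Applying Lemma~\ref{l.codim} to the closed set $F\subset\overline{B}$ with the chosen $\eta$ gives
\[
\mathcal{H}^{\mu,d}_{5\Lambda r}(F)\le C\,r^{\beta p-d}\,\cp_{\beta,p,q}(F,2B,\Lambda B),
\]
so the problem reduces to the content lower bound $\mathcal{H}^{\mu,d}_{5\Lambda r}(F)\ge c\,r^{-d}\mu(B)$. The content density condition~\eqref{e.hausdorff_content_density} supplies the analogous bound \emph{at scale $r$}: together with the elementary estimate $\mathcal{H}^{\mu,d}_r(\overline{B})\ge r^{-d}\mu(B)$ --- immediate, since every admissible cover of $\overline{B}$ uses radii $r_k\le r$, whence $r_k^{-d}\ge r^{-d}$ and the sum dominates $r^{-d}\mu\bigl(\bigcup_k B(x_k,r_k)\bigr)\ge r^{-d}\mu(B)$ --- it yields $\mathcal{H}^{\mu,d}_r(F)\ge c_0\,r^{-d}\mu(B)$.

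The remaining, and only non-routine, step is that enlarging the restriction radius by the bounded factor $5\Lambda$ costs at most a constant for sets confined to $\overline{B}$, namely
\[
\mathcal{H}^{\mu,d}_r(F)\le C(c_\mu,d,\Lambda)\,\mathcal{H}^{\mu,d}_{5\Lambda r}(F).
\]
To prove this I would start from an arbitrary cover of $F$ by balls $B(x_k,r_k)$ with $r_k\le 5\Lambda r$; each ball with $r_k>r$ can be covered by boundedly many balls of radius $r$ (the number depending only on $c_\mu$ and $\Lambda$, by a standard volume-counting argument using doubling), and since $r^{-d}/r_k^{-d}=(r_k/r)^d\le(5\Lambda)^d$ while each of these small balls has measure at most $c_\mu\,\mu(B(x_k,r_k))$, the subdivision inflates the corresponding part of $\sum_k\mu(B(x_k,r_k))r_k^{-d}$ by at most $C(c_\mu,d,\Lambda)$; the balls with $r_k\le r$ are left untouched. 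This turns the given cover into an admissible cover for $\mathcal{H}^{\mu,d}_r(F)$ with comparable total sum, and taking infima gives the displayed inequality. Combining these estimates,
\[
\cp_{\beta,p,q}(F,2B,\Lambda B)\ge C^{-1}r^{d-\beta p}\,\mathcal{H}^{\mu,d}_{5\Lambda r}(F)\ge c\,r^{d-\beta p}\cdot r^{-d}\mu(B)=c\,r^{-\beta p}\mu(B),
\]
which together with the first display establishes~\eqref{e.fractionalk_capacity_density_condition} with $\Lambda=4$ and a constant depending only on $c_0$, $c_\mu$, $c_R$, $\beta$, $p$, $q$, $d$. I expect the scale mismatch just described to be the main obstacle: monotonicity of the Hausdorff content runs the wrong way, so one cannot directly quote~\eqref{e.hausdorff_content_density} at the scale $5\Lambda r$ produced by Lemma~\ref{l.codim} and must instead establish stability of the content under bounded rescalings of the restriction radius on subsets of a fixed ball.
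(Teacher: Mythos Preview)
Your proof is correct and mirrors the paper's approach: same choice $\Lambda=4$, same reduction via Lemma~\ref{l.frac_cap_balls}(i) and Lemma~\ref{l.codim} with $\eta=d/\beta$, and the same elementary bound $\mathcal{H}^{\mu,d}_r(\overline{B})\ge r^{-d}\mu(B)$. The only divergence is in the scale-mismatch step you flagged as the main obstacle. You establish the general content stability $\mathcal{H}^{\mu,d}_r(F)\le C\,\mathcal{H}^{\mu,d}_{5\Lambda r}(F)$ by subdividing each large ball into boundedly many balls of radius $r$ via a covering-number estimate; the paper instead bounds $r^{-d}\mu(B)$ directly against an arbitrary cover $\{B(x_k,r_k)\}$ at scale $5\Lambda r$ by a two-case split: if every $r_k\le r$ the cover is already admissible for $\mathcal{H}^{\mu,d}_r(F)$ and the density condition applies, while if some $r_1>r$ (and the ball meets $\overline{B}$) then $B\subset B(x_1,3r_1)$, so that single term already satisfies $r^{-d}\mu(B)\le C\,r_1^{-d}\mu(B(x_1,r_1))$ by doubling together with $r_1\le 5\Lambda r$. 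The paper's route is a little shorter since it avoids the volume-counting argument; your route yields a reusable content-stability statement as a byproduct.
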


\begin{proof}
We adapt parts of the proof of  \cite[Theorem 11.4]{LMV}. We denote by $c_0>0$ the constant in the assumed inequality
\eqref{e.hausdorff_content_density}. 
We choose $\Lambda =4$, and fix $x\in E$ and $0<r<(1/8) \diam(E)$. 
Lemma~\ref{l.frac_cap_balls}(i) implies that 
\begin{equation}\label{e.b1}
\cp_{\beta,p,q}(\overline{B(x,r)}, B(x,2r),B(x,\Lambda r))
\le C(c_\mu,\beta,p,q) r^{-\beta p}\mu(B(x,r))\,.
\end{equation}
On the other hand, by  the simple estimate in~\cite[Remark 5.2]{CILV}
and \eqref{e.hausdorff_content_density}, 
we obtain
\begin{equation}\label{e.r_hausd}
r^{-d}\mu(B(x,r)) \le \Ha^{\mu,d}_{r}(\overline{B(x,r)})\le
C(c_0)\mathcal{H}^{\mu,d}_{r}(E\cap \overline{B(x,r)})\,.
\end{equation}
 Let $\{B(x_k,r_k)\}_k$ be a countable cover of $E\cap \overline{B(x,r)}$, satisfying 
  $B(x_k,r_k)\cap \overline{B(x,r)}\neq \emptyset$
and  $0<r_k\le 5\Lambda r$
 for all $k$. If $0<r_k\le r$ for all $k$, then by Definition \ref{d.hcc} of the Hausdorff content we have
\begin{equation*}%
\mathcal{H}^{\mu,d}_{r}(E\cap \overline{B(x,r)})\le 
\sum_{k} \mu(B(x_k,r_k))\,r_k^{-d}\,,
\end{equation*}
and thus, by~\eqref{e.r_hausd}, 
\begin{equation}\label{e.r_hausd_all}
r^{-d}\mu(B(x,r)) \le 
C(c_0)\sum_{k} \mu(B(x_k,r_k))\,r_k^{-d}\,.
\end{equation}
On the other hand, if $r<r_k$ for some $k$, then we may assume that $r<r_1\le 5\Lambda r$
and recall that $B(x_1,r_1)\cap \overline{B(x,r)}\neq \emptyset$. 
In this case we obtain, using also the doubling condition~\eqref{e.doubling}, that
\begin{equation}\label{e.r_hausd_one}
\begin{split}
r^{-d}\mu(B(x,r)) & \le C(d)r_1^{-d}\mu(B(x_1,3r_1))
\\&\le C(d,c_\mu)\mu(B(x_1,r_1))\,r_1^{-d}
 \le C(d,c_\mu)\sum_{k} \mu(B(x_k,r_k))\,r_k^{-d}\,.
\end{split}
\end{equation}
By taking infimum over all such covers of $E\cap \overline{B(x,r)}$, we conclude from
\eqref{e.r_hausd_all} and \eqref{e.r_hausd_one} that
\begin{equation}\label{e.r_hausd_bound}
r^{-d}\mu(B(x,r)) 
\le  C(c_0,d,c_\mu)\mathcal{H}^{\mu,d}_{5\Lambda r}(E\cap \overline{B(x,r)}).
\end{equation}

By assumption, the measure $\mu$ satisfies the reverse doubling condition~\eqref{e.rev_dbl_decay}, with constants $\kappa=1/2=2/\Lambda$
and $0<c_R<1$. 
By using Theorem~\ref{l.codim} with $\eta=d/\beta$,  we get
\begin{equation}\label{e.b2}
r^{-d}\mu(B(x,r)) \le C(\beta,q,p,d,c_R,c_\mu,c_0) r^{\beta p-d}\cp_{\beta,p,q}(E\cap \overline{B(x,r)},B(x,2r),B(x,\Lambda r))\,.
\end{equation}
Combining \eqref{e.b1} and \eqref{e.b2} gives
\[
\cp_{\beta,p,q}(\overline{B(x,r)}, B(x,2r),B(x,\Lambda r))\le
C \cp_{\beta,p,q}(E\cap \overline{B(x,r)},B(x,2r),B(x,\Lambda r))\,,
\]
where $C=C(\beta,q,p,d,c_R,c_\mu,c_0)$.  
We have shown that the set  $E$ satisfies the fractional $(\beta,p,q)$-capacity density condition \eqref{e.fractionalk_capacity_density_condition}.
\end{proof}

We are now ready to show the first version of our main result;
the crucial point to observe is that 
inequality $0<d<\beta p$ in 
condition (iii) of Theorem \ref{t.Riesz_and_Hajlasz} is open-ended.  Therefore, the two  other
equivalent conditions in the theorem are also open-ended or self-improving. 
The proof of the implication from (ii) to (iii) is significantly more difficult than proofs of the other implications. 
This deep implication relies on \cite[Theorem 11.4]{LMV} which, in turn, 
is via 
\cite[Theorem 6.4]{CILV}  based on \cite[Theorem 9.5]{MR4478471}.
 Recall that a doubling measure $\mu$ in a  geodesic space satisfies inequalities \eqref{e.doubling_quant} and  \eqref{e.reverse_doubling} for some exponents $Q>0$ and $\sigma>0$, respectively. 

\begin{theorem}\label{t.Riesz_and_Hajlasz}
 Let $(X,d,\mu)$ be a complete geodesic space  equipped  with a doubling measure $\mu$
which satisfies  the quantitative doubling condition~\eqref{e.doubling_quant},
 with exponent $Q>0$,
and  the quantitative reverse doubling condition \eqref{e.reverse_doubling}, with exponent $\sigma>0$.
 Let
$1<p,q<\infty$ and $0<\beta< 1$ be such that 
$Q(1/p-1/q)<\beta<\sigma/p$. 
 Then the following
conditions are equivalent for a closed set $E\subset X$:
\begin{enumerate}
\item[\textup{(i)}] $E$ satisfies the fractional $(\beta,p,q)$-capacity density condition \eqref{e.fractionalk_capacity_density_condition}.
\item[\textup{(ii)}] $E$ satisfies the Haj{\l}asz--Triebel--Lizorkin $(\beta,p,q)$-capacity density condition \eqref{e.Hajlaszc capacity density condition}, 
with some constants $\Lambda \ge 41$ and $c_1\le  1/80$.
\item[\textup{(iii)}] $E$ satisfies the Hausdorff content density condition \eqref{e.hausdorff_content_density} for some $0 < d < \beta p$.
\end{enumerate}
\end{theorem}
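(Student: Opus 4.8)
The plan is to establish the cycle of implications $\mathrm{(iii)}\Rightarrow\mathrm{(i)}\Rightarrow\mathrm{(ii)}\Rightarrow\mathrm{(iii)}$. First I would record the facts about the standing hypotheses that are used repeatedly: a complete geodesic space is connected, so by \cite[Lemma~3.7 and Corollary~3.8]{MR2867756} the doubling measure $\mu$ is reverse doubling for every $\kappa\in(0,1)$, in particular with $\kappa=1/2$ in \eqref{e.rev_dbl_decay}, and it satisfies \eqref{e.doubling_quant} with some $Q>0$ and \eqref{e.reverse_doubling} with some $\sigma>0$; also $\diam(E)\le\diam(X)$. With this, the implication $\mathrm{(iii)}\Rightarrow\mathrm{(i)}$ is exactly Theorem~\ref{t.Hausdorff}, whose only structural assumption (reverse doubling with $\kappa=1/2$) is available.

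For $\mathrm{(i)}\Rightarrow\mathrm{(ii)}$ the idea is to transfer the density condition through the capacity comparison of Section~\ref{s.comparison_capacities}. By parts (i) and (ii) of Lemma~\ref{l.frac_cap_balls}, which apply since $X$ is connected and $0<r<(1/8)\diam(X)$, one has $\cp_{\beta,p,q}(\overline{B(x,r)},B(x,2r),B(x,\Lambda r))\approx r^{-\beta p}\mu(B(x,r))$ for each $\Lambda>2$; the analogous bound holds for $\cp_{\dot{M}^{\beta}_{p,q}}$, its upper bound obtained by running the Lipschitz test-function argument of Lemma~\ref{l.frac_cap_balls}(i) with the Haj{\l}asz--Triebel--Lizorkin seminorm in place of the Gagliardo energy, and its lower bound by combining Theorem~\ref{l.second_side}(i) with Lemma~\ref{l.frac_cap_balls}(ii). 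Hence $\mathrm{(i)}$ is equivalent to the lower bound $\cp_{\beta,p,q}(E\cap\overline{B(x,r)},B(x,2r),B(x,\Lambda r))\gtrsim r^{-\beta p}\mu(B(x,r))$, and, since the Gagliardo energy is monotone in $\Lambda$ and the ball estimates are $\Lambda$-independent, this lower bound persists for every $\Lambda$ at least as large as the one furnished by the hypothesis; we therefore assume $\Lambda$ is so large that $\Lambda_1:=9\Lambda\ge 41$. Applying Theorem~\ref{l.second_side}(i) to the closed set $E\cap\overline{B(x,r)}$ then yields $\cp_{\dot{M}^{\beta}_{p,q}}(E\cap\overline{B(x,r)},B(x,2r),B(x,\Lambda_1 r))\gtrsim\cp_{\beta,p,q}(E\cap\overline{B(x,r)},B(x,2r),B(x,\Lambda r))\gtrsim r^{-\beta p}\mu(B(x,r))\approx\cp_{\dot{M}^{\beta}_{p,q}}(\overline{B(x,r)},B(x,2r),B(x,\Lambda_1 r))$ for all $x\in E$ and $0<r<(1/8)\diam(E)$. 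This is the Haj{\l}asz--Triebel--Lizorkin $(\beta,p,q)$-capacity density condition; holding for all such $r$, it holds a fortiori with the threshold $c_1\le 1/80$ and the dilation constant $\Lambda_1\ge 41$ required in $\mathrm{(ii)}$.

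The implication $\mathrm{(ii)}\Rightarrow\mathrm{(iii)}$ is the deep input and is not proved here: under the present hypotheses and the restriction $Q(1/p-1/q)<\beta<\sigma/p$, \cite[Theorem~11.4]{LMV} shows that the Haj{\l}asz--Triebel--Lizorkin $(\beta,p,q)$-capacity density condition with constants $\Lambda\ge 41$ and $c_1\le 1/80$ forces the Hausdorff content density condition \eqref{e.hausdorff_content_density} for some $0<d<\beta p$. It is worth noting why this detour is needed: applying Lemma~\ref{l.codim} directly to $\mathrm{(i)}$ would only produce a Hausdorff content bound at the borderline codimension $d=\beta p$, with no margin, so the strict inequality $d<\beta p$ --- equivalently, the self-improvement --- genuinely comes from \cite[Theorem~11.4]{LMV}. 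Chaining the three implications gives the asserted equivalence.

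On our side the only delicate point is the bookkeeping in $\mathrm{(i)}\Rightarrow\mathrm{(ii)}$: one checks that the dilation change $\Lambda\mapsto 9\Lambda$ in Theorem~\ref{l.second_side}(i) and the freedom to enlarge $\Lambda$ in $\mathrm{(i)}$ can be reconciled with the thresholds $\Lambda\ge 41$ and $c_1\le 1/80$ that \cite[Theorem~11.4]{LMV} demands. The genuinely hard analysis (the self-improvement) is encapsulated in that cited result, so beyond the seminorm and capacity comparisons of Sections~\ref{s.semi} and~\ref{s.comparison_capacities} no new hard estimate is required here.
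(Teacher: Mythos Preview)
Your proof is correct and follows essentially the same approach as the paper: the cycle $\mathrm{(iii)}\Rightarrow\mathrm{(i)}\Rightarrow\mathrm{(ii)}\Rightarrow\mathrm{(iii)}$ via Theorem~\ref{t.Hausdorff}, the capacity comparison of Theorem~\ref{l.second_side}(i), and \cite[Theorem~11.4]{LMV}. The only cosmetic differences are that the paper cites \cite[Theorem~4.3(a)]{LMV} for the upper bound $\cp_{\dot{M}^{\beta}_{p,q}}(\overline{B(x,r)},B(x,2r),B(x,\Lambda_1 r))\le C r^{-\beta p}\mu(B(x,r))$ rather than rederiving it from the Lipschitz test function, and handles the dilation bookkeeping via $\Lambda_1=\max\{41,9\Lambda\}$ and $\Lambda_2=\Lambda_1/9$ instead of enlarging $\Lambda$ at the outset; note also that for the final comparison you only need the upper bound on $\cp_{\dot{M}^{\beta}_{p,q}}$ of the closed ball, so your lower-bound remark is superfluous (though correct).
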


\begin{proof}
Assume that condition (i) holds.  Let $c_0>0$ and $\Lambda>2$ be the constant in  
\eqref{e.fractionalk_capacity_density_condition}.
Denote $\Lambda_1=\max\{41,9\Lambda\}$ and $\Lambda_2=\Lambda_1/9\ge \Lambda$.
 Fix $x\in E$ and $0<r<(1/80)\diam(E)$. 
By \cite[Theorem 4.3(a)]{LMV}, there is a constant $C_1=C(c_\mu,\beta,p,\Lambda)>0$ such that
\begin{equation}\label{e.r1}
\cp_{\dot{M}^{\beta}_{p,q}}(\overline{B(x,r)},B(x,2r),B(x,\Lambda_1 r)) \le C_1r^{-\beta p}\mu(B(x,r))\,.
\end{equation}
Since $X$ is geodesic, it is connected, and we can use
 Lemma~\ref{l.frac_cap_balls}(ii) to get
\begin{equation}\label{e.r2}
r^{-\beta p}\mu(B(x,r))
\le C(\beta,p, q, c_\mu,\Lambda)\mathrm{cap}_{\beta,p,q} (\overline{B(x,r)},B(x,2r),B(x,\Lambda r))\,.
\end{equation}
By combining \eqref{e.r1} and \eqref{e.r2}, we obtain
\begin{align*}
&\cp_{\dot{M}^{\beta}_{p,q}}(\overline{B(x,r)},B(x,2r),B(x,\Lambda_1 r))
\\&\qquad \le C(\beta,p,q, c_\mu,\Lambda)\mathrm{cap}_{\beta,p,q}(\overline{B(x,r)},B(x,2r),B(x,\Lambda r))\,.
\end{align*}
Since $E$ satisfies the fractional $(\beta,p,q)$-capacity density condition \eqref{e.fractionalk_capacity_density_condition}, with constants $c_0>0$ and $\Lambda>2$, and $\Lambda_2\ge  \Lambda$, we can continue to estimate
\begin{align*}
&\cp_{\dot{M}^{\beta}_{p,q}}(\overline{B(x,r)},B(x,2r),B(x,\Lambda_1 r))\\&\qquad \le C(\beta,p,q, c_\mu,c_0,\Lambda)\mathrm{cap}_{\beta,p,q}(E\cap \overline{B(x,r)},B(x,2r),B(x,\Lambda_2 r))\,.
\end{align*}
Theorem  \ref{l.second_side}(i) then  implies that
\begin{align*}
&\cp_{\dot{M}^{\beta}_{p,q}}(\overline{B(x,r)},B(x,2r),B(x,\Lambda_1 r))\\&\qquad \le C(\beta,p,q, c_\mu,c_0,c_Q,Q,\Lambda)\mathrm{cap}_{\dot{M}^{\beta}_{p,q}}(E\cap \overline{B(x,r)},B(x,2r),B(x,\Lambda_1 r))\,,
\end{align*}
for all $x\in E$ and $0<r<(1/80)\diam(E)$.
Therefore  (ii) holds.

The implication from (ii) to (iii) follows  from \cite[Theorem 11.4]{LMV}.
For the final implication we notice that, since $X$ is connected, the measure $\mu$ satisfies the reverse doubling condition~\eqref{e.rev_dbl_decay}, with constants $\kappa=1/2$
and $0<c_R=C(c_\mu)<1$.
Hence, the implication from (iii) to (i) follows from Theorem \ref{t.Hausdorff}.
\end{proof}

The following corollary explicates a three way self-improvement property of the fractional capacity density condition in complete geodesic spaces, under suitable restrictions on the parameters, most notable of which are the inequalities  $Q(1/p-1/q)<\beta<\sigma/p$. 

\begin{theorem}\label{t.main_impro}
Let $(X,d,\mu)$ be a complete geodesic space  equipped  with a doubling measure $\mu$
which satisfies  the quantitative doubling condition~\eqref{e.doubling_quant},
 with exponent $Q>0$,
and  the quantitative reverse doubling condition \eqref{e.reverse_doubling}, with exponent $\sigma>0$.
 Let
$1<p,q<\infty$ and $0<\beta< 1$ be such that 
$Q(1/p-1/q)<\beta<\sigma/p$.
Assume that a closed set $E\subset X$ satisfies the fractional $(\beta,p,q)$-capacity density condition~\eqref{e.fractionalk_capacity_density_condition}.
Then there exists $\varepsilon>0$ such that
$E$ satisfies also the fractional $(\hat \beta,\hat p,\hat q)$-capacity density condition~\eqref{e.fractionalk_capacity_density_condition}, if
\begin{equation}\label{e.eps}
\lvert \beta-\hat \beta\rvert<\varepsilon\,,\qquad \lvert p-\hat p\rvert<\varepsilon\qquad\text{ and }1\le \hat q<\infty\,.
\end{equation}
\end{theorem}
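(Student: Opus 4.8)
The plan is to reduce the assertion to two results already established above, namely the equivalence in Theorem~\ref{t.Riesz_and_Hajlasz} and the sufficiency criterion in Theorem~\ref{t.Hausdorff}. The decisive point is that condition~(iii) of Theorem~\ref{t.Riesz_and_Hajlasz} --- the Hausdorff content density condition~\eqref{e.hausdorff_content_density} for some codimension $0<d<\beta p$ --- does not refer to the parameters $\beta$, $p$, $q$ at all; it is a property of the set $E$ alone, quantified only by $d$. Moreover the inequality $0<d<\beta p$ is \emph{strict}, hence stable under small perturbations of $\beta$ and $p$.

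First I would apply Theorem~\ref{t.Riesz_and_Hajlasz}: under the standing assumptions on $(X,d,\mu)$ and on the parameters $1<p,q<\infty$, $0<\beta<1$ with $Q(1/p-1/q)<\beta<\sigma/p$, the hypothesis that $E$ satisfies the fractional $(\beta,p,q)$-capacity density condition~\eqref{e.fractionalk_capacity_density_condition} (condition~(i)) produces a codimension $d$ with $0<d<\beta p$ for which $E$ satisfies the Hausdorff content density condition~\eqref{e.hausdorff_content_density} (condition~(iii)). Fix such a $d$ and set
\[
\varepsilon=\tfrac12\min\Bigl\{\beta,\ 1-\beta,\ p-1,\ \frac{\beta p-d}{\beta+p}\Bigr\}>0\,,
\]
which is legitimate since $0<\beta<1$, $p>1$ and $d<\beta p$. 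If $\lvert\beta-\hat\beta\rvert<\varepsilon$, $\lvert p-\hat p\rvert<\varepsilon$ and $1\le\hat q<\infty$, then $0<\hat\beta<1$ and $1<\hat p<\infty$, and
\[
\hat\beta\hat p>(\beta-\varepsilon)(p-\varepsilon)\ge\beta p-\varepsilon(\beta+p)>d>0\,,
\]
so the very same Hausdorff content density condition for $E$ now reads $0<d<\hat\beta\hat p$.

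Finally, since $X$ is geodesic it is connected, so the doubling measure $\mu$ satisfies the reverse doubling condition~\eqref{e.rev_dbl_decay} with $\kappa=1/2$ and some $0<c_R<1$ by \cite[Lemma~3.7]{MR2867756}. Therefore Theorem~\ref{t.Hausdorff}, invoked with the parameters $(\hat\beta,\hat p,\hat q)$ and the codimension $d$, shows that $E$ satisfies the fractional $(\hat\beta,\hat p,\hat q)$-capacity density condition~\eqref{e.fractionalk_capacity_density_condition}, which is exactly the claim. I do not expect a genuine obstacle in this argument: all the substantial work --- the capacity comparison of Section~\ref{s.comparison_capacities}, the self-improvement results imported from \cite{LMV}, and the Hausdorff-content estimates of Theorem~\ref{t.Hausdorff} --- is already absorbed into Theorems~\ref{t.Riesz_and_Hajlasz} and~\ref{t.Hausdorff}, so the remaining step is merely the elementary openness of the relation $0<d<\beta p$ together with the standard fact that connectedness of $X$ supplies the reverse doubling hypothesis required by Theorem~\ref{t.Hausdorff}.
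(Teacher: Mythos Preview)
Your proposal is correct and follows essentially the same route as the paper: apply Theorem~\ref{t.Riesz_and_Hajlasz} to extract a Hausdorff content density condition with $0<d<\beta p$, exploit the open-endedness of this inequality to choose $\varepsilon$, and then feed the perturbed parameters into Theorem~\ref{t.Hausdorff} using that a geodesic space is connected so that~\eqref{e.rev_dbl_decay} holds. Your explicit formula for $\varepsilon$ is a harmless refinement of the paper's nonconstructive choice.
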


\begin{proof}
Theorem \ref{t.Riesz_and_Hajlasz} shows that $E$ satisfies the Hausdorff content density condition \eqref{e.hausdorff_content_density} for some $0 < d < \beta p$.
Choose  $\varepsilon>0$ such that the following inequalities
$1<\hat p<\infty$, $0<\hat \beta<1$, 
and
$0<d<\hat \beta \hat p$ hold for all 
$\hat \beta$, $\hat p$ satisfying the \eqref{e.eps}. This is clearly possible, since all
of the three inequalities are open-ended and the parameters
$\beta$ and $p$ satisfy them. 

Now, assume that \eqref{e.eps} holds for the chosen $\varepsilon>0$.
In particular, 
$E$ satisfies the Hausdorff content density condition \eqref{e.hausdorff_content_density} for  $0 < d < \hat \beta \hat p$.
Since $X$ is connected, the measure $\mu$ satisfies the reverse doubling condition~\eqref{e.rev_dbl_decay}, with constants $\kappa=1/2$
and $0<c_R=C(c_\mu)<1$.
The other assumptions of Theorem \ref{t.Hausdorff} are also satisfied, with parameters $\hat \beta$, $\hat p$ and $\hat q$, and the  theorem implies
that $E$ satisfies the fractional $(\hat \beta,\hat p,\hat q)$-capacity density condition~\eqref{e.fractionalk_capacity_density_condition}.
\end{proof}

\section{Fractional Hardy inequalities}\label{s.applications}

 A combination of Theorems \ref{t.main_impro} and \ref{t.eqv_f} provides 
three ways of self-improvement for a pointwise Hardy inequality in complete geodesic spaces, in terms of $\beta$, $p$ and $q$  satisfying  suitable restrictions. 
 Here we make use of this result  and show
that a pointwise $(\beta,p,q)$-Hardy inequality implies the following
variant of the integral $(\beta,p,q)$-Hardy inequality.

\begin{definition}\label{d.ih}
Let $1\le p,q<\infty$, $0<\beta<1$, and let $E\subset X$ be a closed set.
We say that  $E$ supports the ball $(\beta,p,q)$-Hardy  
inequality  if there are constants  $c_I>0$  and $\lambda\ge 1$ such that
\begin{equation}\label{eq.local_hardy_cap}
\begin{split}
\int_{B\setminus E} \frac{\lvert u(x)\rvert^p}{\dist(x,E)^{\beta p}}\,d\mu(x)\le c_I^p \int_{\lambda B} \biggl(\int_{\lambda B}\frac{|u(x)-u(y)|^q}{d(x,y)^{\beta q}\mu(B(x,d(x,y)))}\,d\mu(y)\biggr)^{p/q}\,d\mu(x)
\end{split}
\end{equation}
whenever $u\colon X\to \R$ is a continuous function 
such that %
$u=0$ on $E$ and $B=B(x_0,R)$ is a ball with
$x_0\in E$ and $0<R<\diam(E)/8$.
\end{definition}

Assuming that a closed set $E\subset X$ satisfies a pointwise $(\beta,p,q)$-Hardy inequality~\eqref{e.pw_hardy}, then
\[
\frac{\lvert u(x)\rvert^p}{\dist(x,E)^{\beta p}} \le c_H^p M_{2\dist(x,E)}\bigl((G_{u,\beta,q,B(x,2\dist(x,E))})^p\bigr)(x)\,,
\]
whenever $0<\dist(x,E)<\diam(E)/8$ and $u$ is as in \eqref{e.pw_hardy}.
Integration of this inequality does not readily  yield \eqref{eq.local_hardy_cap}, since
the restricted maximal operator is typically not bounded in  $L^1(X)$.  The following self-improvement of
the pointwise $(\beta,p,q)$-Hardy inequality in terms of the $p$-parameter allows us to apply a variant of this argument in the proof of Theorem \ref{t.integrated}.

\begin{theorem}\label{t.smaller}
 Let $(X,d,\mu)$ be a complete geodesic space  equipped  with a doubling measure $\mu$
which satisfies  the quantitative doubling condition~\eqref{e.doubling_quant},
 with exponent $Q>0$,
and  the quantitative reverse doubling condition \eqref{e.reverse_doubling}, with exponent $\sigma>0$.
 Let
$1<p,q<\infty$ and $0<\beta< 1$ be such that 
$Q(1/p-1/q)<\beta<\sigma/p$. 
If a closed set $E\subset X$ supports the pointwise $(\beta,p,q)$-Hardy inequality \eqref{e.pw_hardy}, then there exists an exponent $1<\hat p<p$  such that
$E$ supports the pointwise $(\beta,\hat p,q)$-Hardy inequality \eqref{e.pw_hardy}.
\end{theorem}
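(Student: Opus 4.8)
The plan is to chain together the two equivalences and the self-improvement already established earlier in the paper, so that the only genuinely new work is elementary bookkeeping of parameters. First I would observe that a complete geodesic space is in particular connected, so that Theorem~\ref{t.eqv_f} applies with the parameters $(\beta,p,q)$: the pointwise $(\beta,p,q)$-Hardy inequality~\eqref{e.pw_hardy} is equivalent to the fractional $(\beta,p,q)$-capacity density condition~\eqref{e.fractionalk_capacity_density_condition} for $E$. Hence the hypothesis of Theorem~\ref{t.smaller} tells us that $E$ satisfies the fractional $(\beta,p,q)$-capacity density condition.

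Next I would check that the standing hypotheses of Theorem~\ref{t.main_impro} are met, which they are verbatim: $(X,d,\mu)$ is a complete geodesic space equipped with a doubling measure satisfying the quantitative doubling condition~\eqref{e.doubling_quant} with exponent $Q$ and the quantitative reverse doubling condition~\eqref{e.reverse_doubling} with exponent $\sigma$, and the parameters obey $Q(1/p-1/q)<\beta<\sigma/p$. Therefore Theorem~\ref{t.main_impro} yields an $\varepsilon>0$ such that $E$ satisfies the fractional $(\hat\beta,\hat p,\hat q)$-capacity density condition whenever $\lvert\beta-\hat\beta\rvert<\varepsilon$, $\lvert p-\hat p\rvert<\varepsilon$ and $1\le\hat q<\infty$.

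Now I would make the concrete choice $\hat\beta=\beta$, $\hat q=q$, and $\hat p$ any number with $\max\{1,p-\varepsilon\}<\hat p<p$; such $\hat p$ exists because $p>1$. Then $1<\hat p<p$, the triple $(\hat\beta,\hat p,\hat q)=(\beta,\hat p,q)$ lies within the range allowed by Theorem~\ref{t.main_impro}, and so $E$ satisfies the fractional $(\beta,\hat p,q)$-capacity density condition. Finally, since $X$ is connected and $1\le\hat p<\infty$, $1\le q<\infty$, $0<\beta<1$, the equivalence in Theorem~\ref{t.eqv_f}, now applied with the parameters $(\beta,\hat p,q)$, converts this capacity density condition back into the pointwise $(\beta,\hat p,q)$-Hardy inequality~\eqref{e.pw_hardy} for $E$. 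This is exactly the assertion.

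I do not expect any real obstacle internal to this argument: all of the analytic weight is carried by Theorems~\ref{t.eqv_f} and~\ref{t.main_impro} (the latter resting, via Theorem~\ref{t.Riesz_and_Hajlasz} and Theorem~\ref{t.Hausdorff}, on the deep Haj{\l}asz--Triebel--Lizorkin self-improvement results of \cite{LMV}). The only point that deserves a moment's care is that one can pick $\hat p$ strictly below $p$ while keeping $\hat p>1$ and $\lvert p-\hat p\rvert<\varepsilon$, which is immediate from $p>1$; I would also remark explicitly that no inequality of the form $Q(1/\hat p-1/q)<\beta<\sigma/\hat p$ needs to be re-verified for the smaller exponent $\hat p$, since the implication from the capacity density condition back to the pointwise Hardy inequality in Theorem~\ref{t.eqv_f} requires only that $X$ be connected.
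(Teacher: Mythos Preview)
Your proof is correct and follows essentially the same route as the paper: apply Theorem~\ref{t.eqv_f} to pass from the pointwise Hardy inequality to the fractional capacity density condition, invoke Theorem~\ref{t.main_impro} to drop $p$ to some $\hat p\in(1,p)$, and then apply Theorem~\ref{t.eqv_f} once more to return to the pointwise Hardy inequality. Your explicit remark that the second application of Theorem~\ref{t.eqv_f} needs only connectedness (and not the parameter restriction $Q(1/\hat p-1/q)<\beta<\sigma/\hat p$) is a helpful clarification that the paper leaves implicit.
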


\begin{proof}
Theorem \ref{t.eqv_f} shows that the closed set $E$ satisfies the fractional $(\beta,p,q)$-capacity density condition~\eqref{e.fractionalk_capacity_density_condition}.
By Theorem \ref{t.main_impro}, there exists $1<\hat p<p$ such that  $E$ satisfies the fractional $(\beta,\hat p,q)$-capacity density condition \eqref{e.fractionalk_capacity_density_condition}. 
Now
$E$ supports also the pointwise $(\beta,\hat p,q)$-Hardy inequality \eqref{e.pw_hardy}, with a constant $c_H>0$, 
by Theorem \ref{t.eqv_f}.
\end{proof}

Let us emphasize that the implication from (i) to (ii)  is
the main novelty in Theorem \ref{t.integrated}.

\begin{theorem}\label{t.integrated}
 Let $(X,d,\mu)$ be a complete geodesic space  equipped  with a doubling measure $\mu$
which satisfies  the quantitative doubling condition~\eqref{e.doubling_quant},
 with exponent $Q>0$,
and  the quantitative reverse doubling condition \eqref{e.reverse_doubling}, with exponent $\sigma>0$.
 Let
$1<p,q<\infty$ and $0<\beta< 1$ be such that 
$Q(1/p-1/q)<\beta<\sigma/p$. 
 Then the following
conditions are equivalent for a closed set $E\subset X$:
\begin{itemize}
\item[(i)] $E$ supports the pointwise $(\beta,p,q)$-Hardy inequality \eqref{e.pw_hardy}.
\item[(ii)] $E$ supports the   ball  $(\beta,p,q)$-Hardy inequality~\eqref{eq.local_hardy_cap}.
\end{itemize}
\end{theorem}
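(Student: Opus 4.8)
The plan is to prove the two implications separately. The implication (ii)$\Rightarrow$(i) is elementary and I would dispatch it first; the implication (i)$\Rightarrow$(ii) is the substantial one and rests entirely on the self-improvement supplied by Theorem~\ref{t.smaller}.

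\textbf{(ii)$\Rightarrow$(i).} Suppose $E$ supports the ball $(\beta,p,q)$-Hardy inequality~\eqref{eq.local_hardy_cap}, with constants $c_I>0$ and $\lambda\ge 1$. Fix a continuous $u\colon X\to\R$ with $u=0$ on $E$ and a ball $B=B(x_0,R)$ with $x_0\in E$ and $0<R<\diam(E)/8$. Since $x_0\in E$, every $x\in B$ satisfies $0\le\dist(x,E)\le d(x,x_0)<R$, hence $\dist(x,E)^{-\beta p}\ge R^{-\beta p}$ on $B\setminus E$; as $u$ vanishes on $E$, the left side of~\eqref{eq.local_hardy_cap} is at least $R^{-\beta p}\int_B\lvert u\rvert^p\,d\mu$. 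Dividing by $\mu(B)$ and using $\mu(\lambda B)\le C(c_\mu,\lambda)\mu(B)$ yields exactly the boundary $(\beta,p,p,q)$-Poincar\'e inequality~\eqref{eq.bdry_poinc_cap}. Then Lemma~\ref{l.pointwise} applied with $t=p$ (or, since a geodesic space is connected, Theorem~\ref{t.eqv_f}) produces the pointwise $(\beta,p,q)$-Hardy inequality~\eqref{e.pw_hardy}.

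\textbf{(i)$\Rightarrow$(ii).} Assume $E$ supports~\eqref{e.pw_hardy}. The decisive first step is to invoke Theorem~\ref{t.smaller}: under the standing hypotheses there is an exponent $1<\hat p<p$ for which $E$ supports the pointwise $(\beta,\hat p,q)$-Hardy inequality. Raising this to the power $p$ gives, for all $x\in X$ with $0<\dist(x,E)<\diam(E)/8$,
\[
\frac{\lvert u(x)\rvert^p}{\dist(x,E)^{\beta p}}\le C\Bigl(M_{2\dist(x,E)}\bigl((G_{u,\beta,q,B(x,2\dist(x,E))})^{\hat p}\bigr)(x)\Bigr)^{p/\hat p}\,.
\]
Now fix a ball $B=B(x_0,R)$ with $x_0\in E$ and $0<R<\diam(E)/8$; we may assume the right-hand side of~\eqref{eq.local_hardy_cap} with $\lambda=3$ is finite. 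For $x\in B\setminus E$ we have $0<\dist(x,E)<R$ and, since $x_0\in E$, the inclusion $B(x,2\dist(x,E))\subset B(x_0,3R)$; moreover every ball $B(x,r)$ with $r\le 2\dist(x,E)$ is contained in $B(x_0,3R)$. Hence, by the monotonicity~\eqref{e.monotonicity}, the restricted maximal function above is dominated by $M\bigl(\mathbf{1}_{B(x_0,3R)}(G_{u,\beta,q,B(x_0,3R)})^{\hat p}\bigr)(x)$. Integrating over $B\setminus E$, enlarging the domain of integration to $X$, and invoking boundedness of $M$ on $L^{p/\hat p}(X)$ --- legitimate precisely because $p/\hat p>1$ --- gives
\[
\int_{B\setminus E}\frac{\lvert u(x)\rvert^p}{\dist(x,E)^{\beta p}}\,d\mu(x)\le C\int_{B(x_0,3R)}\Bigl(\int_{B(x_0,3R)}\frac{\lvert u(x)-u(y)\rvert^q}{d(x,y)^{\beta q}\mu(B(x,d(x,y)))}\,d\mu(y)\Bigr)^{p/q}\,d\mu(x)\,,
\]
which is~\eqref{eq.local_hardy_cap} with $\lambda=3$.

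The whole difficulty is concentrated in the passage from a pointwise to an integral inequality: a direct integration of~\eqref{e.pw_hardy} would require the restricted maximal operator to be bounded on $L^1(X)$, which fails. The device that unlocks the argument is the self-improvement of the $p$-parameter in Theorem~\ref{t.smaller} --- itself resting on the three-way self-improvement of the fractional capacity density condition (Theorem~\ref{t.main_impro}) and the characterization in Theorem~\ref{t.eqv_f} --- after which the $L^{p/\hat p}$ maximal function estimate is available and the remaining steps are routine manipulations with the monotonicity of $G_{u,\beta,q,A}$ and the doubling property of $\mu$.
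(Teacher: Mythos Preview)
Your proof is correct and follows essentially the same approach as the paper's. For (ii)$\Rightarrow$(i) you spell out explicitly how the ball Hardy inequality yields the boundary $(\beta,p,p,q)$-Poincar\'e inequality and then invoke Theorem~\ref{t.eqv_f}, exactly as the paper does; for (i)$\Rightarrow$(ii) you invoke Theorem~\ref{t.smaller}, use the inclusion $B(x,2\dist(x,E))\subset 3B$ together with~\eqref{e.monotonicity}, dominate the restricted maximal function by $M(\mathbf{1}_{3B}(G_{u,\beta,q,3B})^{\hat p})$, and apply the $L^{p/\hat p}$ maximal theorem --- precisely the paper's argument, with $\lambda=3$.
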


\begin{proof}
 It is clear that condition (ii) implies that   $E$ supports the boundary  $(\beta,p,p,q)$-Poincar\'e inequality~\eqref{eq.bdry_poinc_cap}.
Since
$X$ is connected, the implication from (ii) to (i) follows from Theorem~\ref{t.eqv_f}.
It suffices to show the implication from (i) to (ii).
Assuming condition (i), we observe from Theorem \ref{t.smaller}  that there exists $1<\hat p<p$  such that
$E$ supports the pointwise $(\beta,\hat p,q)$-Hardy inequality \eqref{e.pw_hardy}.
By using this fact, we  show that \eqref{eq.local_hardy_cap} holds. 

Let $u\colon X\to \R$ be a continuous function 
such that %
$u=0$ on $E$.
Let $B=B(x_0,R)$ be a ball with
$x_0\in E$ and $0<R<\diam(E)/8$.
 Fix a point $x\in B\setminus E$. 
Then 
\[0<\dist(x,E)\le d(x,x_0)<R<\diam(E)/8\]
and 
$B(x,2\dist(x,E))\subset 3B$.
Therefore, by applying the pointwise $(\beta,\hat p,q)$-Hardy inequality~\eqref{e.pw_hardy}
and inequality \eqref{e.monotonicity},  we obtain
\begin{align*}
\frac{\lvert u(x)\rvert^p}{\dist(x,E)^{\beta p}} &\le  c_H^p
 \bigl(M_{2\dist(x,E)}\bigl((G_{u,\beta,q,B(x,2\dist(x,E))})^{\hat p}\bigr)(x)\bigr)^{p/{\hat p}}\\
& \le  c_H^p
 \bigl(M_{2\dist(x,E)}\bigl((G_{u,\beta,q,3B})^{\hat p}\bigr)(x)\bigr)^{p/{\hat p}} \le c_H^p
 \bigl(M\bigl(\mathbf{1}_{3B}(G_{u,\beta,q,3B})^{\hat p}\bigr)(x)\bigr)^{p/\hat p}.
\end{align*}
Hence, by integrating over the set $B\setminus E$, we obtain 
\[
\int_{B\setminus E} \frac{\lvert u(x)\rvert^p}{\dist(x,E)^{\beta p}}\,d\mu(x)
\le c_H^p\int_X \bigl(M\bigl(\mathbf{1}_{3B}(G_{u,\beta,q,3B})^{\hat p}\bigr)(x)\bigr)^{p/{\hat p}}\,d\mu(x).
\]
Since ${\hat p}<p$, the Hardy--Littlewood maximal function theorem~\cite[Theorem~3.13]{MR2867756}
implies 
\[
\int_{B\setminus E} \frac{\lvert u(x)\rvert^p}{\dist(x,E)^{\beta p}}\,d\mu(x)
\le C(c_H,{\hat p},p,c_\mu)\int_{3B} G_{u,\beta,q,3B}(x)^p\,d\mu(x).
\]
We have shown that the set $E$ supports
the  ball  $(\beta,p,q)$-Hardy inequality~\eqref{eq.local_hardy_cap}, with constants
$c_I=C(c_H,{\hat p},p,c_\mu)$ and $\lambda=3$.
\end{proof}

\def\cprime{$'$} \def\cprime{$'$} \def\cprime{$'$}

\setlength{\parindent}{0pt}

\end{document}